\DeclareMathOperator{\Var}{Var}
\newcommand{\spr}{\texttt{sp}}
\newcommand{\oa}{\overline}
\def\path{{\tt path}}
\newcommand{\acal}{\mathcal{A}}
\newcommand{\pcal}{\mathcal{P}}
 \newcommand{\ga}{\alpha}
 \newcommand{\gb}{\beta}
 \newcommand{\gc}{\gamma}
 \newcommand{\gth}{\theta}
 \newcommand{\gl}{\lambda}
 \newcommand{\gL}{\Lambda}
\newcommand{\eps}{\varepsilon}
\newcommand{\bdm}{\begin{displaymath}}
\newcommand{\edm}{\end{displaymath}}
\newcommand{\bea}{\begin{eqnarray*}}
\newcommand{\eea}{\end{eqnarray*}}
\newcommand{\bean}{\begin{eqnarray}}
\newcommand{\eean}{\end{eqnarray}}
\newcommand{\prob}{\mathbb{P}}
\newcommand{\E}{\mathbb{E}}
\newcommand{\I}{\mathbb{I}}
\newcommand{\var}{\mathrm{Var}}
\newcommand{\vr}{\varrho}
\newcommand{\Cov}{\text{Cov}}
\renewcommand{\mod}{\text{ mod }}
\newtheorem{theorem}{Theorem}
\newtheorem{corollary}{Corollary}
\newtheorem{definition}{Definition}
\newtheorem{lemma}{Lemma}
\newtheorem{remark}{Remark}
\begin{document}

\title{Large dimensional random $ k$ circulants}

\author{Arup Bose}
 \address{Stat Math Unit, Kolkata, Indian Statistical Institute.}
 \thanks{AB partially supported by J. C. Bose Fellowship, Government of India.}
\author{Joydip Mitra}
\address{ Management Development Institute
Gurgaon}
 \author{Arnab Sen }
\address{Dept. of Statistics, U.C. Berkeley.}
\date{\today}

\keywords{eigenvalue, circulant, $k$-circulant, empirical spectral distribution,
limiting spectral distribution, central limit theorem, normal approximation, spectral radius, Gumbel
distribution.}

\subjclass[2000]{Primary 60B20, Secondary 60B10, 60F05, 62E20, 62G32}
\begin{abstract}
Consider random $k$-circulants $A_{k,n}$ with $n \to \infty, k=k(n)$ and whose input sequence
$\{a_l\}_{l \ge 0}$ is independent with mean zero and variance one and $\sup_n n^{-1}\sum_{l=1}^n \E
|a_l|^{2+\delta}< \infty$ for some $\delta > 0$. Under suitable restrictions on the sequence $\{k(n)\}_{ n \ge 1}$, we show
that the limiting spectral distribution (LSD) of the empirical distribution of suitably scaled
eigenvalues  exists and identify the limits.
In particular,  we prove the following: Suppose $g \ge 1$ is fixed and $p_1$ is the smallest prime divisor of $g$.
 Suppose $P_g=\prod_{j=1}^g E_j$ where $\{E_j\}_{1 \le j \le g}$ are i.i.d.\ exponential random variables with mean one.

 (i) If $k^g  = -1+ s n$ where $s=1$ if $g=1$ and $s = o(n^{p_1 -1})$ if $g>1$, then the  empirical spectral distribution of $n^{-1/2}A_{k,n}$ converges weakly in probability to $U_1P_g^{1/2g}$ where  $U_1$
is uniformly distributed over the $(2g)$th roots of unity, independent of $P_g$.

(ii) If  $g \ge 2$ and $k^g  = 1+ s n$ with $s = o(n^{p_1-1})$ then the empirical spectral distribution of $n^{-1/2}A_{k,n}$ converges weakly in probability to $U_2P_g^{1/2g}$   where $U_2$ is
uniformly distributed over the unit circle in $\mathbb R^2$, independent of $P_g$.

On the other hand, if $k \ge 2 $, $ k= n^{o(1)}$ with $\gcd(n,k) = 1$, and the input is i.i.d. standard normal variables, then $F_{n^{-1/2}A_{k,n}}$ converges weakly in probability to the uniform  distribution  over the circle with center at $(0,0)$ and radius $r = \exp( \E [ \log \sqrt E_1] )$.

We
also show that when $n=k^2+1\to \infty$, and the input is i.i.d.\ with finite $(2+\delta)$ moment, then the spectral radius, with appropriate scaling and centering,
converges to the Gumbel distribution. \end{abstract}
\bigskip

\maketitle



\section{Introduction}\label{section:intro}
For any (random)  $n\times n$ matrix $B$, let $\mu_1(B), \ldots , \mu_n(B) \in \mathbb C = \mathbb R^2$ denote its eigenvalues including multiplicities. Then the
empirical spectral distribution (ESD) of $B$ is the (random) distribution function on
$\mathbb R^2$ given by
$$F_{B} (x, y) = n^{-1}  \# \Big \{ j :  \mu_j(B)  \in (-\infty, x] \times (-\infty, y], \ 1 \le j \le n  \Big \}.$$  \noindent For a sequence of random $n \times n$ matrices $\{ B_n\}_{ n \ge 1}$ if
the corresponding ESDs $F_{B_n}$ converge  weakly (either almost surely or in probability) to a (nonrandom) distribution
$F$ in the space of probability measures on $\mathbb R^2$ as $n \rightarrow \infty$, then $F$ is called the limiting spectral distribution (LSD) of $\{B_n\}_{n \ge 1}$.  See Bai
(1999)\cite{Bai99}, Bose and Sen (2007)\cite{Bosesen2008} and Bose, Sen and Gangopadhyay (2009)\cite{bosesengangopadhyay09} for description of several interesting situations where the LSD exists
and can be explicitly specified.

Another important quantity associated with a matrix is its spectral radius.
 For any matrix $B$, its spectral radius $\spr(B)$ is defined as
\[  \spr(B) := \max \Big \{|\mu|: \mu \  \text{ is an eigenvalue of } B \Big\},  \]
where $|z|$ denotes the modulus of
$z \in \mathbb C$.
For classical random matrix models such as the Wigner matrix and i.i.d.\ matrix, the limiting distribution of an appropriately normalized spectral radius is known for the Gaussian entries (see, for example, Forrester(1993)\cite{forrester93}, Johansson (2000)\cite{johansson00}, Tracy and Widom (2000)\cite{tracywidom00} and, Johnstone (2001)\cite{Johnstone01}) which was  later extended by Soshnikov\cite{soshnikov1, soshnikov2} to more general entries.

Suppose $\underline{a} =\{a_l\}_{ l \ge 0}$ is a sequence of real numbers (called the {\it input} sequence). For positive
integers $k$ and $n$,   define the $n \times n$ square matrix
\[A_{k,n}(\underline{a}) = \left[ \begin{array}{cccc}
a_0 & a_1 & \ldots & a_{n-1} \\
a_{n-k} & a_{n-k+1} & \ldots & a_{n-k-1} \\
a_{n-2k} & a_{n-2k+1} &
\ldots & a_{n-2k-1} \\ & & \vdots & \\
\end{array} \right]_{n \times n}. \]
All subscripts appearing in the matrix entries above are calculated modulo $n$.
Our convention will be to start the row and column indices from zero. Thus, the $0$th row of
$A_{k,n}(\underline{a})$ is
$\left( a_0,\ a_1,\ a_2,\ \ldots,\ a_{n-1}
\right).$
For $0 \le j < n-1$, the $(j+1)$-th row of $A_{k,n}$ is
 a right-circular shift of the $j$-th row by $k$ positions (equivalently, $k \mbox{ mod } n$
positions). We will write
$A_{k,n}(\underline{a})=A_{k,n}$ and it is said to be a $k$-\emph{circulant matrix}.
Note that $A_{1, n}$
is the well-known circulant matrix.
Without loss of generality,
$k$ may always be reduced modulo $n$. Our goal is to study the LSD and the distributional limit of the spectral radius of
suitably scaled $k$-circulant matrices $A_{k, n}(\underline{a})$ when the input sequence $\underline{a} = \{a_l\}_{ l\ge 0}$ consists of  i.i.d.\ random variables.
\subsection{Why study $k$-circulants?}
One of the usefulness of circulant matrix stems from
its deep connection to Toeplitz matrix - while the former has an explicit and easy-to-state formula of its spectral decomposition, the spectral analysis of the latter is much harder and challenging in general.
If the input $\{a_l\}_{ l \ge 0}$ is square
summable, then the circulant approximates the corresponding Toeplitz in various senses with the growing dimension. Indeed, this approximating property is exploited to obtain the
LSD of the Toeplitz matrix as the dimension increases. See Gray (2006)\cite{gray06} for a recent and relatively easy account.

When the input sequence is i.i.d. with
positive variance, then it loses the square summability. In that case, while the LSD of the
(symmetric) circulant is normal (see Bose and Mitra (2002)\cite{Bosemitra02} and Massey, Miller
and Sinsheimer (2007)\cite{masseymillersinsheimer07}), the LSD of the (symmetric) Toeplitz is
nonnormal (see Bryc, Dembo and Jiang (2006)\cite{brycdembojiang06} and Hammond and Miller (2005)\cite{hammil05})

On the other hand, consider the random symmetric band Toeplitz matrix, where the banding parameter
$m$, which essentially is a measure of the number of nonzero entries, satisfies $m \to \infty$ and
$m/n\to 0$. Then again, its spectral distribution is approximated well by the corresponding banded
symmetric circulant. See for example Kargin (2009)\cite{kargin09} and Bose and Basak (2009)\cite{bosebasak09}.
Similarly, the LSD of the $(n-1)$-circulant was derived in Bose and Mitra (2002)\cite{Bosemitra02}) (who called it the reverse circulant matrix). This
has been used in the study of symmetric band Hankel matrices. See Bose and Basak (2009)\cite{bosebasak09}.

The circulant matrices  are
diagonalized by the Fourier matrix $F = ((F_{s,t})), F_{s,t} =  e^{2 \pi i st/n}/ \sqrt{n}, 0 \le s, t < n$. Their eigenvalues are the discrete Fourier transform of the input sequence $\{a_l\}_{0 \le l < n}$  and are given by  $\lambda_t= \sum_{l=0}^{n-1} a_l e^{-2\pi it
/n}, 0 \le t < n$. The eigenvalues of the circulant matrices   crop up crucially in time series analysis. For example, the
periodogram of a sequence $\{ a_l\}_{ l \ge 0}$
is defined as $n^{-1}|\sum_{l=0}^{n-1} a_l e^{2\pi ij /n}|^2$, $-\lfloor\frac{n-1}{2}\rfloor \le j \le  \lfloor\frac{n-1}{2}\rfloor$ and
is a simple function of the eigenvalues of the corresponding circulant matrix. The study of the properties of periodogram is fundamental in the spectral analysis of time series. See for instance Fan and Yao (2003)\cite{fanyao03}. The maximum of the perdiogram, in particular, has been studied in Mikosch (1999)\cite{Mikosch99}.

The $k$-circulant matrix and its block versions  arise in many different areas of Mathematics and  Statistics -  from multi-level supersaturated design of experiment (Georgiou and  Koukouvinos (2006) \cite{georgiou06}) to spectra of De Bruijn graphs (Strok (1992)\cite{strok1992circulant}) and $(0,1)$-matrix solutions to $A^m = J_n$ (Wu, Jia  and Li (2002) \cite{wu2002g}) - just to name a few. See also the book   by Davis (1979)\cite{Davis79} and the article by Pollock
(2002)\cite{Pollock02}. The
$k$-circulant matrices
with random input sequence are examples of so called `patterned'
matrices. Deriving LSD for general patterned matrices has drawn
significant attention in the recent literature. See for example the review article by Bai
(1999)\cite{Bai99} or the more recent Bose and Sen (2008)\cite{Bosesen2008} and also Bose, Sen and Gangopadhyay (2009)\cite{bosesengangopadhyay09}).

However, there does not seem to have been any studies of the general random
$k$-circulant either  with respect to the LSD or with respect to the spectral radius. It seems natural to investigate these.
The LSDs of the $1$-circulant and $1$-circulant with symmetry restriction, $(n-1)$ circulant  are known.
It seems interesting to
investigate the possible LSDs that may arise from $k$-circulants. Likewise, the limit distributions
of the spectral radius of circulant and the $(n-1)$-circulant are both Gumbel. It seems natural to
ask what happens to the distributional limit of the spectral radius for general $k$-circulants.
\subsection{Main results and discussion}

\subsubsection{Limiting Spectral distributions}

The LSDs for $k$-circulant matrices  are known for a few important special cases.
If the input sequence $\{a_l\}_{l \ge 0}$ is i.i.d.\ with finite third moment, then the limit distribution of the
circulant matrices ($k=1$) is bivariate normal (Bose and Mitra (2002)\cite{Bosemitra02}). For the
\emph{symmetric} circulant with i.i.d.\ input having finite second moment, the LSD is real normal,
(Bose and Sen (2007)\cite{Bosesen2008}). For the $k$-circulant with $k=n-1$, the LSD is the
symmetric version of the positive square root of
the exponential variable with mean  one  (Bose and Mitra (2002)\cite{Bosemitra02}).

Clearly, for many combinations of  $k$ and
$n$, a lot of eigenvalues are zero. Later  we provide a formula solution for the eigenvalues.
From this, if $k$ is prime and $n=m \times k$ where $\gcd(m,k) =
1$, then $0$ is an eigenvalue with multiplicity $(n-m)$. To avoid this degeneracy and to keep our exposition simple, we primarily restrict our attention to the case when $\gcd(k, n)=1$.

In general, the structure of the eigenvalues depend on the number theoretic relation between $k$ and $n$ and the LSD may vary widely. In particular, LSD is not `continuous' in $k$. In fact, while the ESD of usual circulant matrices $n^{-1/2}A_{1, n}$ is  bivariate normal,  the ESD of  2-circulant matrices $n^{-1/2}A_{2, n}$ for $n$ large odd number looks like a solar ring
(See Figure~\ref{fig:case3&4}). The next theorem tells us that the radial component of the LSD of $k$-circulants with $k \ge 2$ is always degenerate,  at least when the input sequence is i.i.d. normal,  as long as $k = n^{o(1)}$ and $\gcd(k, n)=1$.
\begin{figure}[htp]
\centering
\includegraphics[height=50mm, width =60mm]{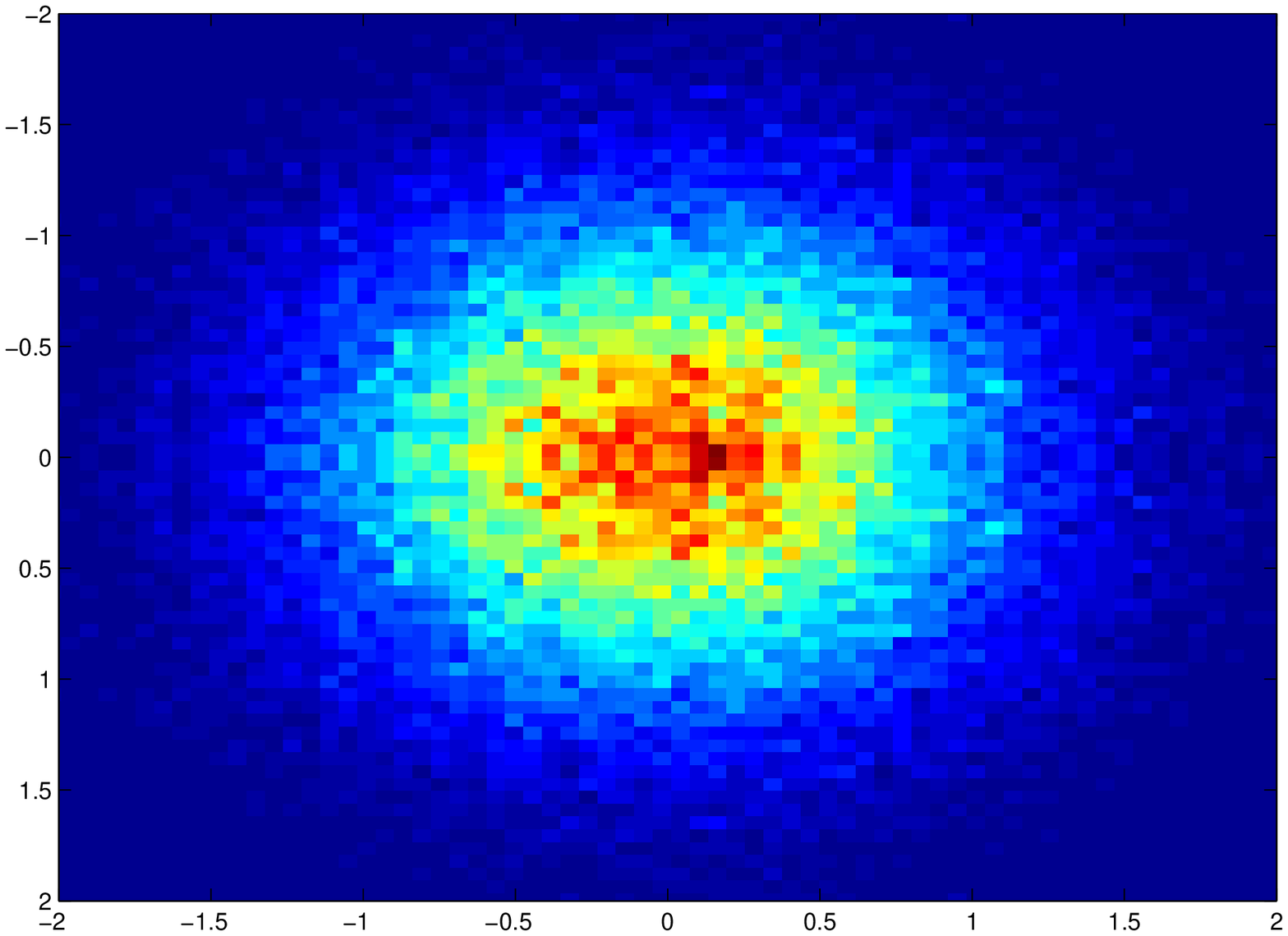}
\hfill
\includegraphics[height=50mm, width =60mm ]{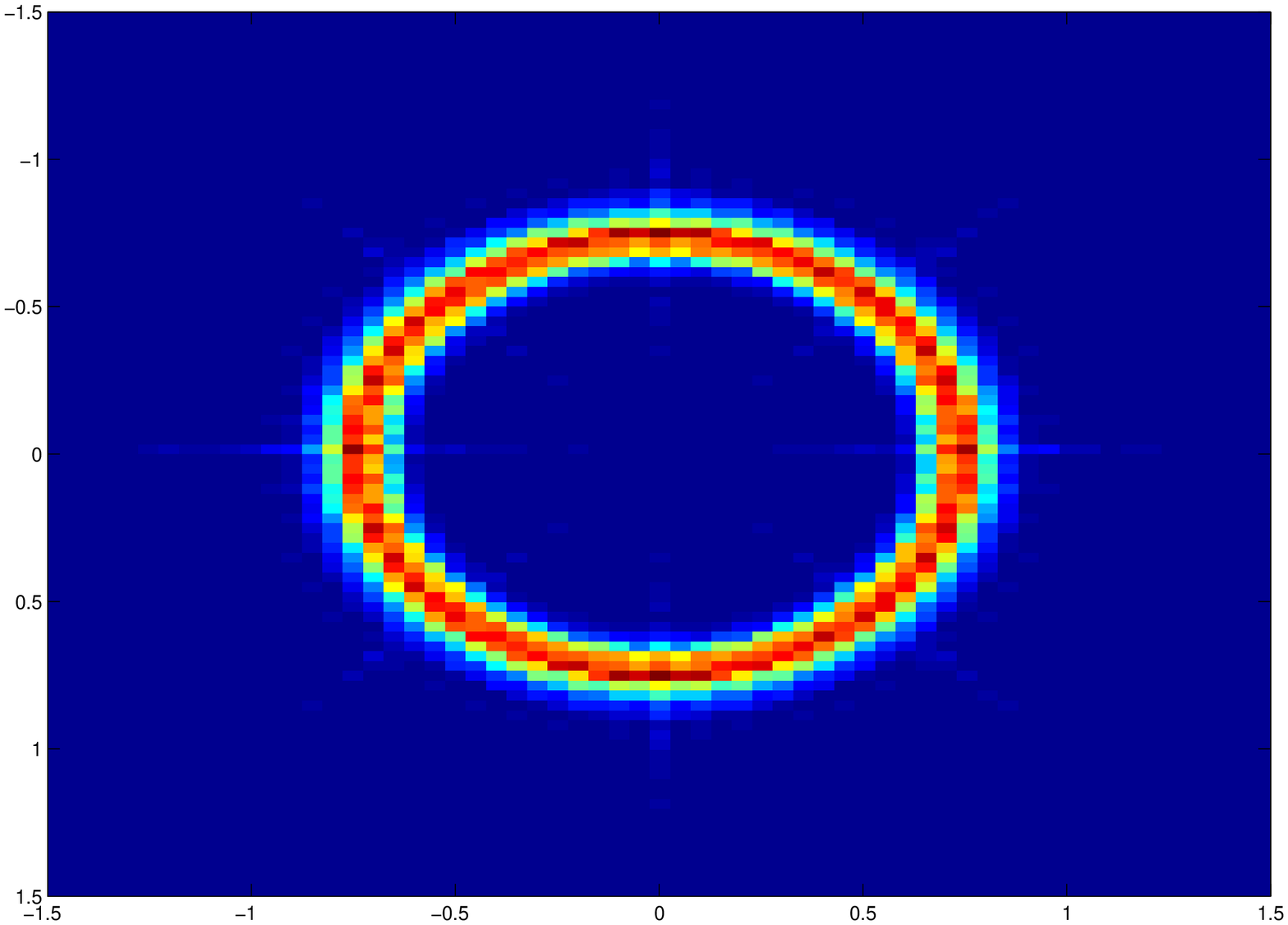}\\

\caption{Eigenvalues of $100$ realizations of $n^{-1/2}A_{k,n}$ with $a_l \sim N(0,1)$ when (i)  $k =1, n=901$ (left) and (ii) $k =2, n=901$ (right). The color represents the height of the histogram - from red (high) to blue (low). } \label{fig:case3&4}
\end{figure}
\begin{theorem}\label{thm:degenerate}
Suppose $\{a_l\}_{ l \ge 0}$ is an i.i.d.\ sequence of  $N(0,1)$ random
variables. Let $k \ge 2 $  be such that $ k= n^{o(1)}$ and $n \to \infty $ with $\gcd(n,k)
= 1$. Then
$F_{n^{-1/2}A_{k,n}}$
converges weakly
in probability
to the
uniform  distribution  over the circle with center at $(0,0)$ and radius $r
= \exp( \E [ \log \sqrt E] )$,
$E$ being an exponential random variable with mean one.
\end{theorem}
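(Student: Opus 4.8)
The plan is to work directly with the explicit formula for the eigenvalues of $A_{k,n}$, which (as promised in the text) will be derived later in the paper. The key structural fact is that, when $\gcd(k,n)=1$, the index set $\{0,1,\dots,n-1\}$ splits into orbits under multiplication by $k$ modulo $n$, and the nonzero eigenvalues of $A_{k,n}$ are, up to roots of unity, the products $\bigl(\prod_{t \in \mathcal{P}} \lambda_t\bigr)^{1/|\mathcal{P}|}$ over the orbits (``$k$-cosets'') $\mathcal{P}$, where $\lambda_t = \sum_{l=0}^{n-1} a_l e^{-2\pi i l t/n}$ is the $t$-th Fourier coefficient of the input. Thus an eigenvalue sampled from the ESD is (a root of unity times) the geometric mean of the $\lambda_t$'s over a random orbit. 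First I would record this description and observe that for $k = n^{o(1)}$ almost every orbit has length comparable to $\log n / \log k \to \infty$; more precisely, the number of indices lying in short orbits is $o(n)$, so they do not affect the ESD.

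Next, the modulus. Write $|\mu|^2$ for the squared modulus of the eigenvalue attached to an orbit $\mathcal{P}$; then $\log |\mu| = \frac{1}{|\mathcal{P}|}\sum_{t\in\mathcal{P}} \log |\lambda_t|$. For i.i.d.\ $N(0,1)$ input, the normalized periodogram ordinates $|\lambda_t|^2/n$ are, for distinct $t$ in a suitable half-range, asymptotically i.i.d.\ exponential with mean one; and crucially $\lambda_{kt} $, $\lambda_{k^2 t},\dots$ are obtained by permuting the same Gaussian coordinates, so within one orbit the family $\{|\lambda_t|^2/n : t \in \mathcal{P}\}$ behaves like $|\mathcal P|$ exponentials that are only weakly dependent. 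Hence, over an orbit of length $\to\infty$, a law-of-large-numbers argument gives $\frac{1}{|\mathcal P|}\sum_{t\in\mathcal P}\log\bigl(|\lambda_t|^2/n\bigr) \to \E[\log E]$ in probability, so $n^{-1/2}|\mu| \to \exp(\tfrac12 \E[\log E]) = \exp(\E[\log\sqrt E]) = r$. Averaging this over the random choice of orbit (and discarding the $o(n)$ eigenvalues in short orbits, plus the vanishing fraction that are exactly zero) shows the radial part of the ESD concentrates at $r$.

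For the angular part, I would argue that the argument of $\mu$ becomes equidistributed on $[0,2\pi)$. Two ingredients: the $2g$-th-root-of-unity (or unit-circle) factor $U$ multiplying the geometric mean already randomizes the phase when the relevant order $g$ is large, and independently the phase of $\prod_{t\in\mathcal P}\lambda_t$ is itself asymptotically uniform because each $\lambda_t$ for a real Gaussian input has a rotationally-symmetric-in-the-limit complex value (the real and imaginary parts being asymptotically independent centered Gaussians). Concretely, one shows that for any fixed $h\neq 0$ the $h$-th trigonometric moment $\E\big[ e^{i h \arg \mu}\big] \to 0$; combined with the modulus concentration this identifies the LSD as the uniform law on the circle of radius $r$. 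The cleanest route is to compute $\int z^a \bar z^b \, dF_{n^{-1/2}A_{k,n}}(z)$ via the trace formula $n^{-1}\sum_j \mu_j^a \bar\mu_j^{\,b}$ and show it converges to $r^{2a}\mathbf{1}\{a=b\}$, i.e.\ to the moments of $U\cdot r$ with $U$ uniform on the unit circle; weak convergence in probability then follows from a second-moment (variance) estimate on these trace functionals, using the near-independence of periodogram ordinates across different orbits.

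The main obstacle, as I see it, is the dependence structure within a single $k$-orbit: the ordinates $\lambda_t, \lambda_{kt},\dots$ are deterministic rearrangements of the \emph{same} underlying Gaussian vector, so they are far from independent, and one must show that nevertheless the empirical average of $\log|\lambda_{k^j t}|^2$ along the orbit self-averages. I would handle this by exploiting that distinct elements of an orbit are distinct Fourier frequencies, for which joint Gaussian approximation with asymptotically diagonal covariance holds (the CLT for Fourier coefficients / periodogram, as in the time-series literature cited), together with a truncation of $\log|\cdot|$ near $0$ to control the logarithmic singularity; the condition $k=n^{o(1)}$ is exactly what guarantees orbits are long enough for this averaging to kick in. A secondary technical point is promoting convergence of expected trace moments to convergence \emph{in probability} of the ESD, which requires the variance of each trace functional to vanish — this again reduces to showing that contributions from pairs of indices in different orbits decorrelate, a routine but somewhat lengthy moment computation.
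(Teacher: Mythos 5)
Your overall plan does track the paper's proof: invoke the eigenvalue formula of Theorem~\ref{theo:formula} so the eigenvalues of $n^{-1/2}A_{k,n}$ are the $n_j$-th roots of $\Pi_j/n^{n_j/2}$ over the $k$-orbits $\mathcal P_j$; show that indices in orbits of bounded size are $o(n)$ when $k=n^{o(1)}$; concentrate the radial part by a law of large numbers; and spread the angular part uniformly. What goes wrong is your diagnosis of the ``main obstacle.'' You assert that $\lambda_t,\lambda_{kt},\dots$ within one orbit are ``deterministic rearrangements of the same underlying Gaussian vector, so they are far from independent,'' and propose to push this through with CLT-for-Fourier-coefficients or near-independence arguments. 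For Gaussian input this dependence does not exist: the discrete Fourier vectors $\bigl(\cos(2\pi t l/n),\sin(2\pi t l/n)\bigr)_{0\le l<n}$ for distinct $0<t\le n/2$ are mutually orthogonal, so $\{a_{t,n},b_{t,n}\}_{0<t\le n/2}$ are \emph{exactly} jointly independent mean-zero Gaussians with variance $n/2$ (this is Lemma~\ref{lem:product}(a)). Consequently within an orbit you get either $n_j$ exactly independent complex Gaussians (non-self-conjugate $\mathcal P_j$) or $n_j/2$ exactly independent ones appearing in conjugate pairs (self-conjugate $\mathcal P_j$), a distinction you do not make and which the paper handles via Lemma~\ref{lem:product}(b),(c) and the two probabilities displayed at \eqref{eq:prodexpC}. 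The geometric mean of moduli over an orbit is then an exact product of i.i.d.\ standard exponentials, and its concentration at $r$ is a direct SLLN in $n_j\to\infty$, not an approximation argument.

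A secondary issue is the angular part. Your trigonometric-moment / trace-functional route (computing $n^{-1}\sum_j\mu_j^a\bar\mu_j^b$) is unnecessary and would be awkward to close, because the $n_j$ eigenvalues attached to a single $\mathcal P_j$ are by construction equally spaced on one circle (the $n_j$-th roots of one complex number), so their empirical angular measure is deterministically within $O(1/n_j)$ of the uniform law -- the paper simply counts angles and uses $N_C/n\to0$ plus $C^{-1}\to0$ to kill the error. In short, you are solving a harder, essentially non-Gaussian problem and overlooking the exact Fourier independence that makes the Gaussian case comparatively easy; that independence is precisely what Lemma~\ref{lem:product} packages and what your argument should lean on.
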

\begin{remark}
Since $-\log E$ has the standard Gumbel distribution
which has mean $\gamma $ where $\gamma  \approx 0.57721$ is  the Euler-Mascheroni constant, it follows that $r = e^{ - \gc/2} \approx 0.74930$.
\end{remark}

In view of Theorem~\ref{thm:degenerate}, it is natural to consider the case when $k^{g}= \Omega(n)$ and $\gcd(k, n)=1$ where $g$ is a fixed integer. In the next two theorems, we consider two special cases of the above scenario, namely when $n$ divides $k^g \pm 1$. Consider the following assumption.\\

\noindent  \textbf{Assumption \texttt{I}.}  The sequence $\{a_l\}_{l\ge 0}$ is independent with mean zero, variance
one and for some $ \delta > 0$, $$\sup_n n^{-1}\sum_{i=0}^{n-1} E|a_l|^{2+\delta} <
\infty.$$

We are now ready to state our main theorems on the existence of LSD.

 \begin{theorem} \label{theo:lsd12}
Suppose $\{a_l\}_{l \ge 0}$
satisfies Assumption \texttt{I}. Fix $g \ge 1$ and let $p_1$ be the smallest prime divisor of $g$.
 Suppose  $k^g  = -1+ s n$ where $s=1$ if $g=1$ and $s = o(n^{p_1 -1})$ if $g>1$.
 Then $F_{n^{-1/2}A_{k,n}}$ converges weakly in probability to
 $U_1(\prod_{j=1}^g E_j)^{1/2g}$  as $n \to \infty$ where $\{ E_j\}_{ 1 \le j \le g}$ are i.i.d.\ exponentials with mean one and $U_1$
is uniformly distributed over the $(2g)$th roots of unity, independent of $\{E_j\}_{ 1 \le j \le g}$.
\end{theorem}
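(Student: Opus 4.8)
The plan is to work directly from an explicit formula for the eigenvalues of $A_{k,n}$, which the paper promises to derive, and then identify the LSD by a moment-type or direct weak-convergence argument. First I would recall the structure of the spectrum: when $\gcd(k,n)=1$, the $n$-th roots of unity are partitioned into \emph{orbits} under multiplication by $k$ modulo $n$, and on each orbit of size $\ell$ the corresponding block of $A_{k,n}$ contributes $\ell$ eigenvalues whose $\ell$-th powers equal a product $\prod_{t} \lambda_{\omega^{k^t}}$ of discrete-Fourier-type sums $\lambda_\omega = \sum_{l=0}^{n-1} a_l \omega^l$ evaluated along that orbit (the $\ell$ eigenvalues themselves being the $\ell$-th roots of that product, multiplied by the $\ell$-th roots of unity). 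The hypothesis $k^g = -1 + sn$ means $k^{2g} \equiv 1 \pmod n$, so every orbit has size dividing $2g$; the growth condition $s = o(n^{p_1-1})$ is exactly what forces the \emph{generic} orbit to have size exactly $2g$ (orbits of smaller size must have their length dividing $2g$ and dividing something constrained by $s$, so the number of atypical indices is $o(n)$ and contributes nothing to the LSD). This combinatorial/number-theoretic step — counting orbit sizes and showing all but $o(n)$ of the eigenvalues come from full orbits of size $2g$ — is where the condition on $s$ and $p_1$ gets used, and I expect it to be the main obstacle.

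Next, on a typical orbit $\{\omega, \omega^k, \dots, \omega^{k^{2g-1}}\}$ of size $2g$, the relevant product is $\prod_{t=0}^{2g-1}\lambda_{\omega^{k^t}}$. Because $k^g \equiv -1$, we have $\omega^{k^{g+t}} = \overline{\omega^{k^t}}$, so $\lambda_{\omega^{k^{g+t}}} = \overline{\lambda_{\omega^{k^t}}}$ (using that the $a_l$ are real), and the product over the orbit factors as $\prod_{t=0}^{g-1} |\lambda_{\omega^{k^t}}|^2$. The $2g$ eigenvalues attached to this orbit are then $\zeta \cdot \big(\prod_{t=0}^{g-1}|\lambda_{\omega^{k^t}}|^2\big)^{1/2g}$ as $\zeta$ ranges over the $(2g)$-th roots of unity. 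Thus the argument of each eigenvalue is, conditionally on the modulus, uniform over the $(2g)$-th roots of unity — this produces the factor $U_1$ — while the modulus is $\big(\prod_{t=0}^{g-1}|\lambda_{\omega^{k^t}}|^2\big)^{1/2g}$.

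It then remains to show that, as $\omega$ ranges over a set of orbit representatives, the empirical distribution of $\prod_{t=0}^{g-1}|\lambda_{\omega^{k^t}}|^2$ converges to the law of $\prod_{j=1}^g E_j$ with $E_j$ i.i.d.\ mean-one exponentials. For a single Fourier frequency, $|\lambda_\omega|^2$ — the periodogram ordinate — is asymptotically exponential with mean one under Assumption \texttt{I} by a Lindeberg-type CLT for the real and imaginary parts (the $(2+\delta)$-moment bound supplies the Lindeberg condition), and for distinct, non-conjugate frequencies these ordinates are asymptotically independent; the $g$ frequencies $\omega, \omega^k, \dots, \omega^{k^{g-1}}$ within a generic orbit are distinct and pairwise non-conjugate, so the joint limit is a product of $g$ independent exponentials. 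To pass from a fixed finite collection of frequencies to the full empirical distribution over all $\sim n/(2g)$ orbits, I would use a second-moment / variance computation showing the random ESD concentrates around its mean (this gives convergence in probability rather than a.s.), together with the fact that the $o(n)$ short orbits are negligible. Assembling the angular part ($U_1$ uniform over $(2g)$-th roots of unity) with the radial part ($(\prod_{j=1}^g E_j)^{1/2g}$), and noting the independence of the two since the angle is deterministically uniform given the orbit, yields the claimed limit $U_1(\prod_{j=1}^g E_j)^{1/2g}$. The case $g=1$, $s=1$ (so $n \mid k+1$, i.e.\ essentially the reverse-circulant situation) is the degenerate base case where there is a single frequency per orbit and the answer reduces to the known symmetric-square-root-of-exponential LSD.
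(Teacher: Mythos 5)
Your proposal follows essentially the same route as the paper's proof: the eigenvalue--orbit decomposition from Zhou's formula, the number-theoretic step showing that all but $o(n)$ indices lie in orbits of size exactly $2g$ (the paper's Lemma on $\upsilon_{k,n}$, which rests on bounds like $\gcd(k^{2g/b}-1,n)\le k^{g/p_1}+1=o(n)$), the observation that $k^g\equiv -1 \pmod n$ pairs each frequency in an orbit with its conjugate so that the orbit product equals $\prod_t |\lambda_t|^2\ge 0$ and the block eigenvalues are its $(2g)$-th roots times the $(2g)$-th roots of unity (whence the exactly uniform angular factor $U_1$), and finally convergence of the ESD of the radial parts by showing the mean converges and the variance vanishes. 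The one place you genuinely diverge is the universality step. The paper first settles the Gaussian case exactly (the half-orbit moduli are literally i.i.d.\ exponentials, so Glivenko--Cantelli applies), and then for general input under Assumption \texttt{I} uses a quantitative Berry--Esseen bound (Bhattacharya--Ranga Rao, Lemma~\ref{rangarao}) that is \emph{uniform over the roughly $n/2g$ blocks}, controlling the $\eta$-boundary of the non-convex region $\{\prod_{j}(x_j^2+y_j^2)/2\le x^{2g}\}$ by intersecting with a large box and a compactness argument. Your plan instead invokes a qualitative Lindeberg CLT for a fixed collection of frequencies plus asymptotic independence. This can be made to work, but note that $\E F_n(x)\to F(x)$ needs the per-block probabilities to converge uniformly (or at least in Ces\`aro average) over blocks whose defining frequencies change with $n$; a CLT for a fixed finite set of frequencies does not give this directly. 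It can be repaired either by the paper's quantitative bound or by applying a triangular-array CLT along an arbitrary sequence of blocks together with a subsequence argument (the limit is the same standard $2g$-dimensional Gaussian for every block, and the relevant set is a continuity set for it); similarly the variance step should be phrased as a $4g$-dimensional joint normal approximation for pairs of distinct blocks, uniform over pairs. With those uniformity points made precise, your outline matches the paper's argument; what the paper's quantitative route buys is exactly that uniformity for free, at the cost of having to tame the boundary of a non-convex set.
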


 \begin{theorem}\label{theo:lsd45}
Suppose $\{a_l\}_{l \ge 0}$ satisfies Assumption \texttt{I}. Fix $g \ge 1$ and let $p_1$ be  the smallest prime divisor of $g$. Suppose  $k^g  = 1+ s n$ where  $s =0$ if $g=1$ and $s = o(n^{p_1-1})$ if $g>1$. Then $F_{n^{-1/2}A_{k,n}}$ converges weakly in probability to
$U_2(\prod_{j=1}^g E_j)^{1/2g}$  as $n \to \infty$ where $\{E_j\}_{1 \le j \le g}$ are i.i.d.\ exponentials with mean one and $U_2$ is
uniformly distributed over the unit circle in $\mathbb R^2$, independent of $\{E_j\}_{1 \le j \le g}$.
\end{theorem}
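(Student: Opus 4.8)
\textbf{Proof proposal for Theorem~\ref{theo:lsd45}.}

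The plan is to exploit the explicit formula for the eigenvalues of a $k$-circulant, which reduces the spectral analysis to a combinatorial study of the partition of $\mathbb Z_n=\{0,1,\dots,n-1\}$ into orbits under multiplication by $k$ (more precisely, under the map $x\mapsto kx\bmod n'$ for a suitable modulus $n'$ dividing $n$; here since $\gcd(k,n)=1$ and, for $g=1$, $k\equiv 1$, or for $g>1$ there is a genuine partition into $k$-orbits). When $k^g=1+sn$ with $s=o(n^{p_1-1})$, the key number-theoretic fact to establish first is that \emph{almost all} orbits have size exactly $g$: the number of $x$ whose $k$-orbit has size $d$ for a proper divisor $d$ of $g$ (equivalently $k^d\equiv 1$ in the relevant cyclic group) is $o(n)$, because such $x$ must satisfy a congruence modulo $(k^d-1)$, and $k^d-1$ is forced to be "small" relative to $n$ precisely by the hypothesis $s=o(n^{p_1-1})$ together with $p_1$ being the smallest prime divisor of $g$. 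So after discarding an $o(n)$-fraction of eigenvalues (which does not affect the ESD limit), we may assume every orbit has exactly $g$ elements.

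Next I would write, for a generic orbit $\mathcal O=\{x,kx,k^2x,\dots,k^{g-1}x\}$, the $g$ eigenvalues of $A_{k,n}$ associated to $\mathcal O$ as the $g$ roots of a degree-$g$ polynomial whose coefficients are built from the degree-$n$ Fourier transforms $\lambda_j=\sum_{l=0}^{n-1}a_l\omega^{jl}$, $\omega=e^{2\pi i/n}$, evaluated at $j\in\mathcal O$. In fact, on each orbit the eigenvalues turn out to be the $g$-th roots of the product $\prod_{j\in\mathcal O}\lambda_j$ up to a fixed rotation determined by the orbit. The plan is to show: (a) the normalized values $n^{-1/2}\lambda_j$ for $j$ in a typical orbit are, to leading order, independent complex Gaussians (this is the standard CLT for the finite Fourier transform of an i.i.d.\ — more generally, Assumption~\texttt{I} — sequence, giving that the real and imaginary parts are asymptotically i.i.d.\ $N(0,1/2)$, so $n^{-1}|\lambda_j|^2\Rightarrow E_j$ exponential with mean one); (b) across $j$ in the same orbit these are jointly asymptotically independent; and (c) the phase of the $g$-th root is asymptotically uniform on the unit circle in $\mathbb R^2$ because of the $k^g=1+sn$ structure — unlike the $k^g=-1+sn$ case, where an extra sign forces the phase onto the $(2g)$-th roots of unity, here $U_2$ is genuinely uniform on the circle. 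Consequently each orbit contributes $g$ eigenvalues distributed like $U_2(\prod_{j=1}^g E_j)^{1/2g}$ times the $g$-th roots of unity, which as a set has the same empirical law as $g$ i.i.d.\ copies of $U_2(\prod_{j=1}^g E_j)^{1/2g}$.

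To pass from "each orbit behaves correctly" to convergence of the full ESD in probability, I would use the standard second-moment / method-of-moments device: show $\E F_{n^{-1/2}A_{k,n}}$ (tested against a bounded continuous function $f$, or against the moments $\int z^a\bar z^b$) converges to the corresponding functional of $U_2(\prod E_j)^{1/2g}$, and that the variance of $\langle F_{n^{-1/2}A_{k,n}},f\rangle$ tends to $0$. The variance bound follows from the fact that distinct orbits involve disjoint sets of Fourier frequencies, and the weak dependence between $\lambda_j$'s at different frequencies can be controlled by the $(2+\delta)$-moment hypothesis (truncation plus a Lindeberg-type argument, exactly as in the companion paper on $1$-circulants). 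I expect the main obstacle to be part of step two: making rigorous the joint asymptotic independence and the exact distributional identification (points (a)–(c)) \emph{uniformly} over the $\Theta(n/g)$ orbits, and in particular handling the small but nonzero correlations among $\{\lambda_j:j\in\mathcal O\}$ coming from the non-i.i.d.\ (only independent, with a uniform moment bound) input — this is where Assumption~\texttt{I} really has to be used carefully, via a truncation argument and a quantitative multivariate CLT with explicit error terms, rather than a soft limit theorem. The number-theoretic input bounding the number of short orbits is the other delicate point, but it is essentially a counting estimate and, given the hypothesis $s=o(n^{p_1-1})$, should go through cleanly.
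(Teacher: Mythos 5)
Your route is essentially the paper's: use the eigenvalue formula to reduce to the orbit products $\Pi_j=\prod_{t\in\mathcal P_j}\lambda_t$, show via the hypothesis $s=o(n^{p_1-1})$ that all but $o(n)$ frequencies lie in orbits of size exactly $g$ (Lemma~\ref{lem:fkn}(ii)), compute the limit exactly when the input is Gaussian (Lemma~\ref{lem:product}), and then transfer to general input satisfying Assumption \texttt{I} by a quantitative multivariate normal approximation together with the first/second moment argument $\E F_n\to F$, $\var F_n\to 0$. On these points your outline agrees with the actual proof, including where the real technical work lies (uniformity of the CLT over the $\Theta(n/g)$ orbits after truncation; in the paper this is Lemma~\ref{rangarao}, and one extra wrinkle you should anticipate is controlling the Gaussian measure of $\eta$-boundaries of the \emph{non-convex} sets $\{\prod_j(x_j^2+y_j^2)\le\cdot\}$, which the paper handles by intersecting with a large box and a compactness argument).

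There is, however, one concrete missing step, and it is exactly the step that makes this theorem differ from the $k^g=-1+sn$ case. Your claim (b) — that the $n^{-1/2}\lambda_t$, $t$ in a single orbit, are (asymptotically) independent — fails whenever the orbit is self-conjugate, i.e.\ contains both $t$ and $n-t$: then $\lambda_{n-t}=\bar\lambda_t$, the product $\Pi_j$ is a nonnegative real, and the corresponding eigenvalues have phases confined to roots of unity rather than uniform phase. In the $-1$ case \emph{every} orbit is self-conjugate (since $tk^g\equiv -t$), which is precisely why the limit there is $U_1$ on $2g$ points; so to obtain $U_2$ uniform on the circle here you must prove that the frequencies lying in self-conjugate orbits are $o(n)$ in number. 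This is not implied by your short-orbit count: an orbit can have full length $g$ and still satisfy $tk^b\equiv -t \bmod n$ for some $1\le b<g$. The paper proves it by the estimate $\#\{0<t<n:\ tk^b\equiv -t \bmod n\}\le \gcd(n,k^b+1)\le \gcd(k^g-1,k^b+1)\le k^{\gcd(g,b)}+1\le k^{g/p_1}+1=(1+sn)^{1/p_1}+1=o(n)$, using Lemma~\ref{lem:gcdab} and, a second time, the hypothesis $s=o(n^{p_1-1})$. Your appeal to ``the $k^g=1+sn$ structure'' gestures at this dichotomy but supplies no argument; without this counting estimate the identification of the angular part as uniform on the circle does not follow. (A minor inaccuracy elsewhere: the eigenvalues attached to $\mathcal P_j$ are exactly the $n_j$-th roots of $\Pi_j$, with no additional orbit-dependent rotation.)
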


\begin{figure}[htp]
\centering
\includegraphics[height=50mm, width =60mm ]{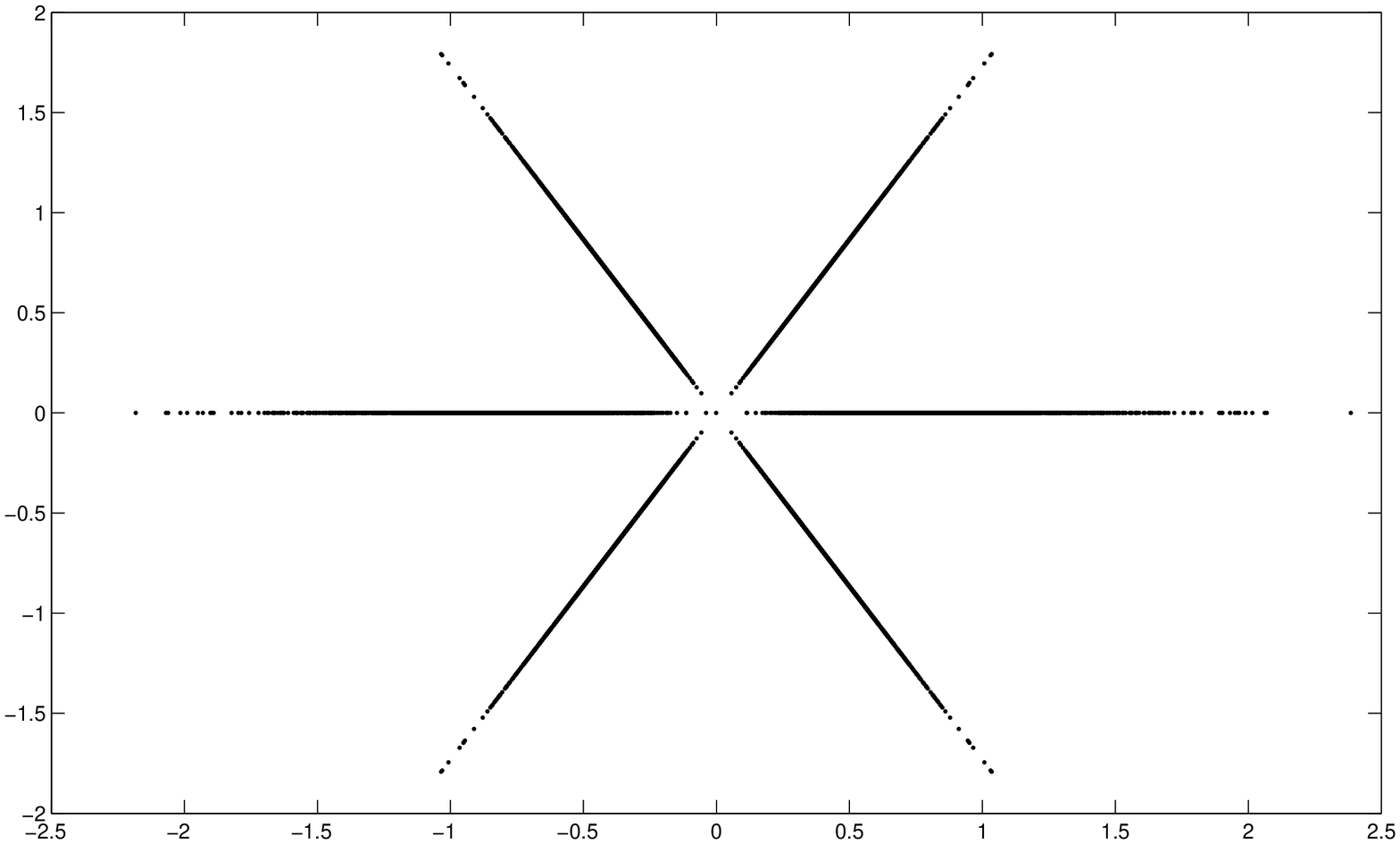}
\hfill
\includegraphics[height=50mm, width =60mm ]{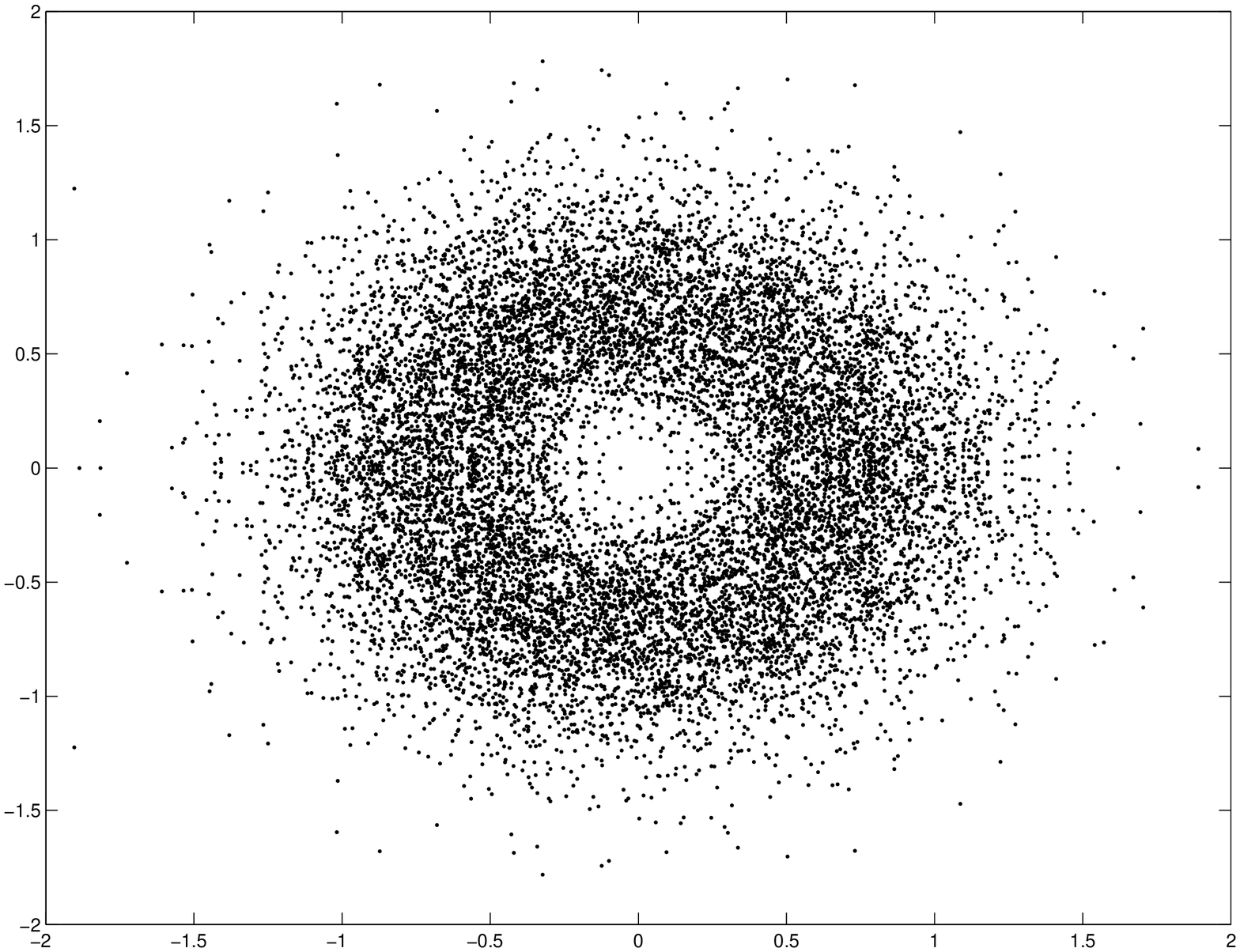}\\

\caption{Eigenvalues of $20$ realizations of $n^{-1/2}A_{k,n}$ with $a_l \sim \text{Exp}(1) -1$ when (i)  $k =11, k^3 = -1 +2n$ (left) and (ii) $k =11, k^3 = 1 +2n$ (right).} \label{fig:case1&2}
\end{figure}

\begin{remark}
(1) Theorem~\ref{theo:lsd12} and Theorem~\ref{theo:lsd45} recover the LSDs of $k$-circulants for $k=n-1$ and $k=1$ respectively.

\noindent (2)
While the radial coordinates of the LSD  described in Theorem \ref{theo:lsd12}  and
\ref{theo:lsd45} are same, their angular coordinates differ.
While
one puts its mass only at
discrete places $e^{i 2 \pi j/ 2g}, 1 \le j \le 2g$ on the unit circle, the other spreads  its mass
uniformly over the entire unit circle. See Figure \ref{fig:case1&2}.

\noindent (3) The restriction on $s = (k^g \pm 1)/n$ in the above two theorems seems to be a natural one. Suppose $g$ is a prime and so $g = p_1$. In this case if $s \ge n^{p_1 -1} $, then $k$ becomes greater than or equal to $n$ violating the assumption that $k < n$.

\noindent (4)  We cannot expect similar LSDs to hold for more general cases like $k^g = \pm r + n, r >1$ fixed. Compare  Figure \ref{fig:case1&2} and Figure \ref{fig:case5&6}.
\end{remark}
\begin{figure}
\centering
\includegraphics[height=50mm, width =60mm]{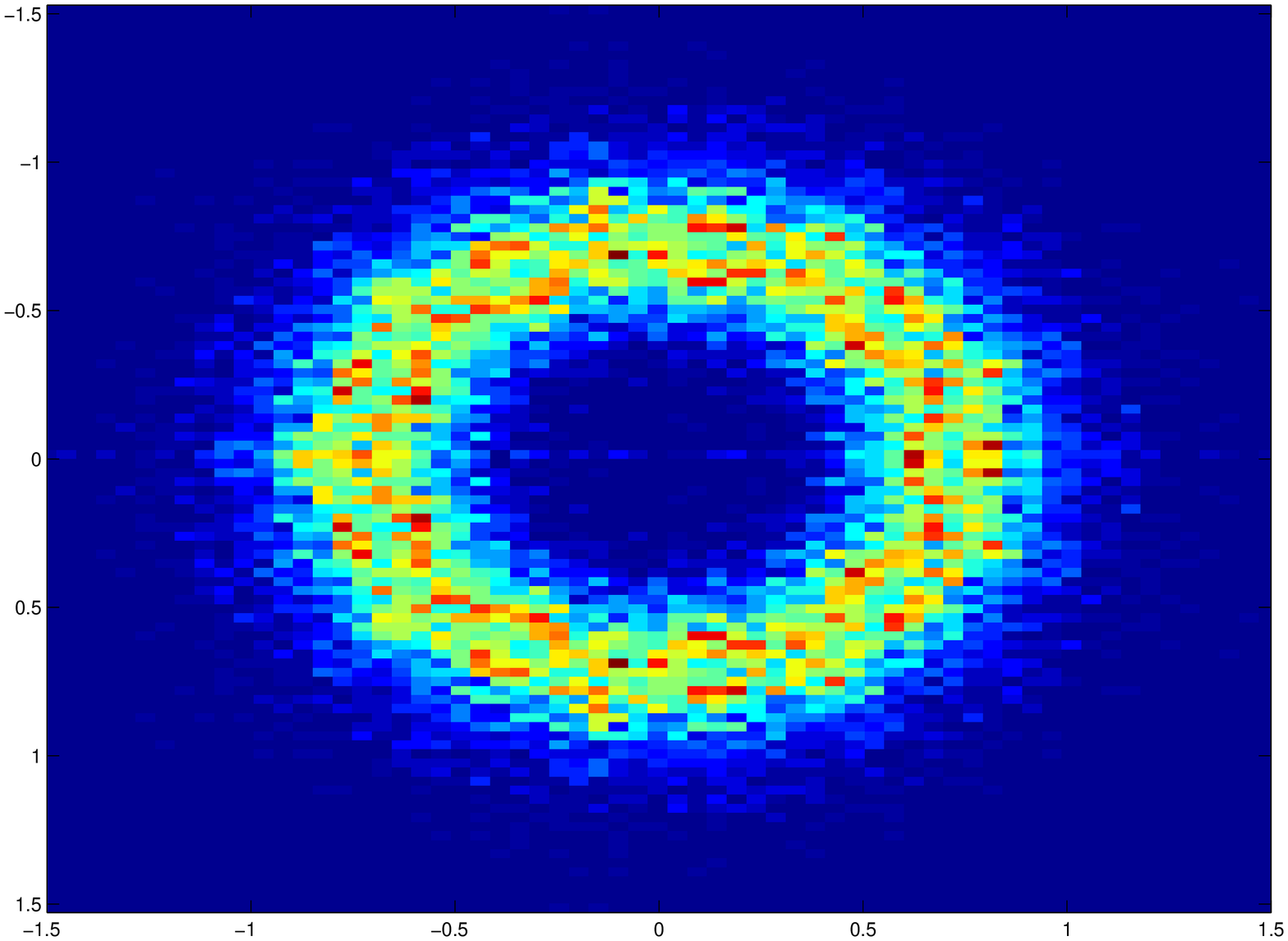}
\hfill
\includegraphics[height=50mm, width =60mm ]{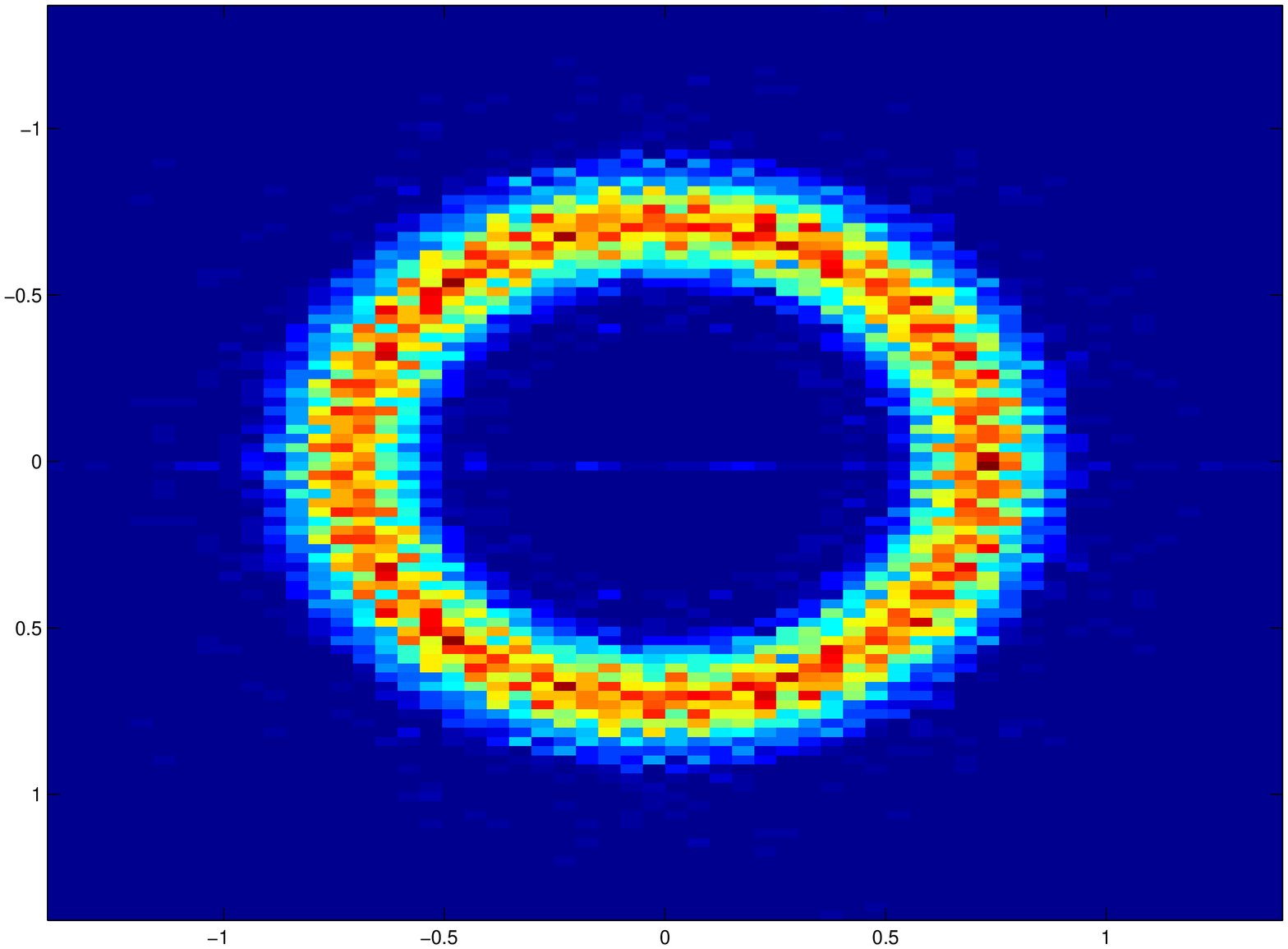}\\
\caption{Eigenvalues of $100$ realizations of $n^{-1/2}A_{k,n}$ with $a_l \sim N(0,1)$ when (i)  $k =16, n=-3 +k^2$ (left) and (ii) $k =16, n=3+k^2$ (right). The color represents the height of the histogram - from red (high) to blue (low). } \label{fig:case5&6}
\end{figure}
\subsubsection{Spectral radius}
For the $k$-circulant, first suppose that the input sequence is i.i.d. standard normal.
When $k=1$, it is easy to check that the modulus square of the eigenvalues are
exponentials and they are independent of each other. Hence,
the appropriately scaled and normalized spectral radius converges to the Gumbel distribution. But when
the input sequence is i.i.d.\
but not necessary normal, that  independence structure is lost. A  careful use of  Koml\'{o}s-Major-Tus\'{a}ndi type sharp normal approximation results are needed to deal with this case. See Davis and Mikosch (1999)\cite{Mikosch99}.
These approximations imply
that the limit
continues to be Gumbel.
The spectral radius of the
$(n-1)$-circulant is the same as that of the circulant and hence it has the same limit.
See also Bryc and Sethuraman (2009)\cite{brycsethuraman09}
who use the same approach
for the symmetric circulant.

Now let $g=2$ and for further simplicity, assume that $n = k^2+1$.
If the input sequence is i.i.d. standard normal, then
the modulus of the nonzero eigenvalues are
independent and distributed according
to $(E_1E_2)^{1/4}$, where $E_j, j=1,2$  are i.i.d.\ standard exponential.
Thus,
the behavior of the spectral radius is the same as that of the
maxima of i.i.d variables each distributed as
$(E_1E_2)^{1/4}$. This
is governed by the tail behaviour of $E_1E_2$.
We deduce this tail behaviour via properties of Bessel functions and the limit again turns out to be Gumbel. Now, as
suggested by the results of Davis and Mikosch (1999)\cite{Mikosch99}, even when the input sequence
is only assumed to be i.i.d. and not necessarily normal, with suitable moment condition, some kind
of invariance principle holds and the same limit persists. We show that this is indeed the case.
\begin{theorem} \label{theo:max} Suppose $\{a_l\}_{l\ge 0}$ is an
i.i.d.\ sequence of random variables with mean zero and variance $1$ and $ \E |a_l|^\gc < \infty $
for some $\gc > 2$. If $n=k^2+1$ then
\[ \frac{ \spr( n^{-1/2}A_{k, n}) -d_q } { c_q}  \]
converges in distribution to the standard Gumbel  as $n \to \infty$ where $ q =q(n) = { \lfloor \frac{n}{4} \rfloor}$ and the normalizing constants $ c_n $ and $d_n$ can be
taken as follows
\begin{equation}\label{eq:normalize}
c_n =(8\log n)^{-1/2}\ \ \text{and} \ \ d_n=\frac{ (\log n)^{1/2} }{\sqrt 2}\left
(1+\frac{1}{4}\frac{\log\log n}{\log n}\right)+\frac{1}{2(8\log n)^{1/2}} \log \frac{\pi}{2}.
\end{equation}
\end{theorem}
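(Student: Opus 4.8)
The starting point is the explicit formula for the eigenvalues of a $k$-circulant in terms of the circulant (Fourier) eigenvalues $\lambda_j=\sum_{l=0}^{n-1}a_l\omega^{jl}$, $\omega=e^{2\pi i/n}$. When $n=k^2+1$ we have $k^2\equiv-1\pmod n$, $\gcd(k,n)=1$, and $\gcd(k\pm1,n)\le2$; hence the multiplication-by-$k$ map on $\mathbb Z_n\setminus\{0\}$ breaks this set into a bounded number of exceptional singletons (the orbit of $0$; the element $n/2$ when $n$ is even; the at most two fixed points of $t\mapsto kt$) together with $q=\lfloor n/4\rfloor$ orbits of the form $\{t,kt,-t,-kt\}$. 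For such an orbit the four associated eigenvalues of $A_{k,n}$ are the fourth roots of $\lambda_t\lambda_{kt}\lambda_{-t}\lambda_{-kt}=|\lambda_t|^2|\lambda_{kt}|^2$ (using $\lambda_{-s}=\overline{\lambda_s}$ for real input), so each has modulus $(|\lambda_t|\,|\lambda_{kt}|)^{1/2}$, i.e.\ after scaling by $n^{-1/2}$ it equals $(\widehat E_t\widehat E_{kt})^{1/4}$ with $\widehat E_s:=|\lambda_s|^2/n$. The exceptional eigenvalues have modulus $n^{-1/2}|\lambda_s|=O_{\prob}(1)$ for each fixed exceptional $s$ (CLT, finite variance), and there are only $O(1)$ of them; since the main maximum turns out to be of order $\sqrt{\log n}\to\infty$, they do not influence it, so $\spr(n^{-1/2}A_{k,n})=\max_{1\le j\le q}(\widehat E_{t_j}\widehat E_{kt_j})^{1/4}$ with probability tending to $1$, where the $\{t_j,kt_j\}$, reduced modulo the identification $s\sim n-s$, run over a partition of (all but $O(1)$ of) $\{1,\dots,\lfloor n/2\rfloor\}$ into $q$ disjoint pairs.

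I would then settle the Gaussian case first. When $\{a_l\}$ is i.i.d.\ $N(0,1)$ the vector $(\lambda_s)$ is the image of a standard Gaussian vector under a unitary map, so $\{\widehat E_s:s=1,\dots,\lfloor n/2\rfloor\}$ are exactly i.i.d.\ $\mathrm{Exp}(1)$ (apart from the single $\widehat E_{n/2}$ when $n$ is even, which is $O_{\prob}(1)$ and harmless). Since the pairs $\{t_j,kt_j\}$ are disjoint, the products $Y_j:=\widehat E_{t_j}\widehat E_{kt_j}$ are i.i.d., each distributed as $E_1E_2$ with $E_1,E_2$ i.i.d.\ $\mathrm{Exp}(1)$, and $\spr(n^{-1/2}A_{k,n})$ agrees up to $o_{\prob}(1)$ with $\max_{j\le q}Y_j^{1/4}$. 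The tail of $Y_1^{1/4}$ follows from the integral/Bessel representation $\prob(E_1E_2>x)=\int_0^\infty e^{-s-x/s}\,ds=2\sqrt{x}\,K_1(2\sqrt x)$ together with $K_1(z)\sim\sqrt{\pi/(2z)}\,e^{-z}$, giving $\prob(Y_1^{1/4}>t)=\prob(E_1E_2>t^4)\sim\sqrt{\pi}\,t\,e^{-2t^2}$. This tail lies in the Gumbel domain of attraction (von Mises condition), and solving $q\,\prob(Y_1^{1/4}>d_q+c_qx)\to e^{-x}$ by the standard iteration — $2d_q^2=\log q+\tfrac12\log\pi+\log d_q+o(1)$, hence $d_q^2=\tfrac12\log q+\tfrac14\log\log q+\tfrac14\log(\pi/2)+o(1)$ and $c_q\sim(4d_q)^{-1}$ — reproduces exactly the constants in \eqref{eq:normalize} evaluated at $q$. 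This proves the theorem for Gaussian input.

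For general i.i.d.\ input with $\E|a_l|^{\gc}<\infty$, $\gc>2$, the $\widehat E_s$ are the periodogram ordinates of $\{a_l\}$ at the Fourier frequencies $2\pi s/n$, and the plan is to invoke the Komlós--Major--Tusnády--type strong normal approximation of Davis and Mikosch (1999)\cite{Mikosch99}: under the moment hypothesis one constructs on a common probability space an i.i.d.\ $N(0,1)$ sequence whose Fourier coefficients differ, in real and imaginary parts, from those of $\{a_l\}$ by $o(\sqrt n/\log n)$ uniformly over $s$. Because near the maximum $|\lambda_s|\asymp\sqrt{n\log n}$, this coupling error becomes an $o((\log n)^{-1/2})=o(c_q)$ perturbation of each relevant $(\widehat E_{t_j}\widehat E_{kt_j})^{1/4}$, hence an $o_{\prob}(c_q)$ perturbation of $\spr(n^{-1/2}A_{k,n})$; the same approximation shows that the joint law of the few largest periodogram ordinates is asymptotically that of the corresponding i.i.d.\ exponentials, so the dependence among the $Y_j$ is asymptotically negligible. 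Combined with the Gaussian computation, this yields the stated Gumbel limit. The main obstacle is precisely this last step: making the normal approximation uniform over all $\sim n$ Fourier frequencies with an error of the sharp order $o(\sqrt n/\log n)$ — which is exactly where $\gc>2$ enters — and simultaneously controlling the weak dependence of the periodogram ordinates near their joint maximum; pinning down the constant $\sqrt\pi$ in the tail of $E_1E_2$, needed for the $\log(\pi/2)$ term of $d_q$, is more routine but still requires sharp Bessel-function asymptotics.
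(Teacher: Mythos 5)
Your reduction of $\spr(n^{-1/2}A_{k,n})$ to $\max_{1\le j\le q}(\widehat E_{t_j}\widehat E_{kt_j})^{1/4}$, the identification of the $O(1)$ exceptional eigenvalues, and the Gaussian computation --- i.i.d.\ $\mathrm{Exp}(1)$ periodogram ordinates, the $K_1$-Bessel tail $\prob(E_1E_2>x)\sim\sqrt\pi\,x^{1/4}e^{-2\sqrt x}$, and the derivation of $(c_n,d_n)$ --- all track the paper's Lemma~\ref{lem:maxima} and the first part of its proof of Theorem~\ref{theo:max} faithfully.

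The genuine gap is in the non-Gaussian (invariance) step. You propose an almost-sure KMT-type \emph{coupling}: on one probability space, a Gaussian sequence whose Fourier coefficients differ from those of $\{a_l\}$ by $o(\sqrt n/\log n)$ uniformly over all frequencies. That is not what Davis and Mikosch (1999) prove, and to my knowledge no such sup-norm coupling for discrete Fourier transforms is available; proving it would likely be much harder than the theorem at hand, since the Fourier coefficients are $n$ heavily overlapping weighted sums of the same sequence. Moreover even granted such a bound, the passage from a sup-norm coupling of the $\lambda_s$ to an $o_p(c_q)$ perturbation of the \emph{maximum} of $(\widehat E_t\widehat E_{kt})^{1/4}$ would itself need an argument (your ``$|\lambda_s|\asymp\sqrt{n\log n}$ near the max'' is heuristic). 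The paper's actual machinery is quite different: (i) truncate the $a_l$ at level $n^{1/\gc}$ (Lemma~\ref{lem:trunc}); (ii) perturb the truncated variables by small i.i.d.\ Gaussian noise $\sigma_n N_l$ so that finite-dimensional marginals of the Fourier coordinates have densities to which Davis--Mikosch's moderate-deviations \emph{local limit theorem} (Lemma~\ref{lm:normalapprox}, Corollary~\ref{cor:approx_by_normal}) applies, uniformly over choices of index sets; (iii) feed these density approximations into a Bonferroni inclusion--exclusion sandwich (Lemma~\ref{bonferroni}), showing $S_{j,n}-T_{j,n}\to0$ and $\sup_n T_{j,n}\le K^j/j!$, which yields $Q_1^{(n)}-Q_2^{(n)}\to0$; and (iv) remove the Gaussian smoothing at the end via the product inequality and $M_n^2(\sigma_n N)=O_p(\sigma_n\log n)$. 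None of steps (i)--(iv) appears in your sketch, and the coupling you invoke in their stead would not go through.
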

In the next section
 we state the basic eigenvalue formula for $k$-circulant
and develop some essential
properties of the eigenvalues. In Section \ref{sec:lsd} and Section \ref{sec:spectralnorm} we state and prove the results on LSD  and the spectral radius respectively. An Appendix reproves the known eigenvalue
formula for $k$-circulant.

\section{Eigenvalues of the $k$-circulant}\label{section:eigenvalues}
We first describe the eigenvalues of a $k$-circulant and prove some related auxiliary properties.
The formula solution, in particular is already known, see for example Zhou (1996)\cite{Zhou96}. We provide a
more detailed analysis which we later use in our study of the LSD and the spectral radius.
Let
\begin{equation}\label{eq:lambda}
\omega=\omega_n := \cos(2\pi/n)+i\sin(2\pi/n),\ i^2=-1\ \  \text{and} \ \
\lambda_t = \sum\limits_{l=0}^{n-1} a_{l} \omega^{tl}, \ \
0 \le t < n.
\end{equation}
\begin{remark}
Note that $\{ \lambda_t, 0 \le t < n \}$ are eigenvalues of the usual
circulant matrix $A_{1,n}$.
\end{remark}
Let $p_1 <p_2<\cdots < p_c$ be all the
common prime factors of $n$ and $k$. Then we may write,
\begin{equation}\label{eq:decomposition} n=n^{\prime} \prod_{q=1}^{c} p_q^{\beta_q} \ \text{  and  } \ \  k=k^{\prime}
\prod_{q=1}^{c} p_q^{\alpha_q}.\end{equation}
Here $\alpha_q,\ \beta_q \ge 1$ and $n^{\prime}$, $k^{\prime}$,
$p_q$ are pairwise relatively prime. We will show that $(n-n^{\prime})$ eigenvalues of $A_{k,n}$ are
zero and $n^{\prime}$ eigenvalues are non-zero functions of $\underline{a}$.

To identify
the non-zero eigenvalues of $A_{k,n}$, we need some preparation.
For any positive integer $m$, the set $\mathbb Z_{m}$ has its usual meaning, that is, $\mathbb Z_{m}  = \{ 0, 1,2, \ldots, m-1\}.$ We introduce the following family of sets
\begin{equation}\label{eq:Sx} S(x) :=
\big  \{xk^b \mod n^{\prime}: b \ge 0 \big  \}, \quad x \in \mathbb Z_{n'}.
\end{equation} We
observe the following facts about the family of  sets $\{ S(x) \}_{ x \in \mathbb Z_{n'}}$.\\

\noindent (I) Let $g_x = \#S(x)$. We call $g_x$  the {\it order} of $x$. Note that $g_0 = 1$. It is
easy to see that
$$S(x) = \{xk^b \mod  n^{\prime}: 0 \le b < g_x \}.$$
An alternative description of $g_x$, which we will use later extensively,
 is the following. For $x \in \mathbb Z_{n^{\prime}}$, let
\[   \mathcal O_x = \{b > 0 \ : b \text{ is an integer and }   xk^b = x \mod  n'  \}.
\]
Then $g_x = \min O_x$, that is,  $g_x$ is the smallest positive integer $b$ such
that $xk^b = x \mod n'$.\\

\noindent (II)
The distinct sets from the collection $\{S(x) \}_{ x \in \mathbb Z_{n'}}$ forms a partition of $\mathbb
Z_{n^{\prime}}$.  To see this, first note that $x \in S(x)$ and hence $\bigcup_{x \in \mathbb Z_{n'}}S(x) =  \mathbb Z_{n'}$. Now suppose $S(x) \cap S(y) \neq \emptyset$. Then, $xk^{b_1} = yk^{b_2}
\mod  n^{\prime}$ for some integers $b_1, b_2 \ge 1$. Multiplying both sides by $k^{g_x-b_1}$
we see that, $x \in S(y)$ so that, $S(x) \subseteq S(y)$. Hence, reversing the roles, $S(x)=S(y)$.

We call the distinct sets in $\{S(x) \}_{ x \in \mathbb Z_{n'}}$ the {\em eigenvalue partition} of $\mathbb Z_{n^{\prime}}$ and denote the partitioning sets and their sizes by
\begin{equation}\label{eq:partitionsets}\pcal_0 = \{0\}, \pcal_1, \ldots, \pcal_{\ell -1}\ \ \text{and}\ \ n_j = \#\pcal_j, \  0 \le j < \ell.\end{equation}
Define \begin{equation}\label{eq:xj} \Pi_j := \prod_{t \in \pcal_j} \lambda_{tn/n'}, \ \ j=0, 1, \ldots , \ell-1.\end{equation} The following theorem provides the formula solution for the
eigenvalues of $A_{k,n}$.
Since this is from a Chinese article
which may not be easily accessible to all readers,
we have provided a proof in the
Appendix.
\begin{theorem}[Zhou (1996)\cite{Zhou96}] \label{theo:formula}
 The characteristic polynomial of $A_{k,n}$ is given by
 \begin{equation}\label{eq:evalue}
 \chiup \left(A_{k,n}
\right) (\lambda)=\lambda^{n-n^{\prime}} \prod_{j=0}^{\ell-1} \left( \lambda^{n_j} - \Pi_j \right).
\end{equation}
\end{theorem}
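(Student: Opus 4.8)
The plan is to exhibit an explicit eigenvector decomposition that simultaneously block-diagonalizes $A_{k,n}$ over the eigenvalue partition of $\mathbb Z_{n'}$. First I would conjugate $A_{k,n}$ by the Fourier matrix $F = ((n^{-1/2}\omega^{st}))_{0\le s,t<n}$. Since the $(j+1)$-th row of $A_{k,n}$ is the $j$-th row shifted right by $k$, one checks that $A_{k,n} = C \cdot P^{?}$, or more directly that the action of $A_{k,n}$ on the Fourier basis vector $f_t = (\omega^{tl})_{0\le l<n}$ satisfies $A_{k,n} f_t = \lambda_t\, f_{tk \bmod n}$, where $\lambda_t$ is as in \eqref{eq:lambda}. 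This is the key computation: reading off the $s$-th coordinate of $A_{k,n} f_t$ gives $\sum_{l} a_{(l - sk)\bmod n}\,\omega^{tl} = \omega^{skt}\sum_m a_m \omega^{tm} = \lambda_t\,\omega^{skt}$, which is $\lambda_t$ times the $s$-th coordinate of $f_{tk}$. Hence in the Fourier basis, $A_{k,n}$ is a ``weighted permutation'' matrix: it sends the line spanned by $f_t$ into the line spanned by $f_{tk\bmod n}$.

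Next I would analyze the orbit structure of the map $t \mapsto tk \bmod n$ on $\mathbb Z_n$. The subspace spanned by $\{f_t : t \equiv 0 \bmod n'\}$ — equivalently the $f_{tn/n'}$ for $t\in\mathbb Z_{n'}$ — is invariant, and on it the index map becomes $x \mapsto xk \bmod n'$ (using $\gcd(n',k)=1$ so this is a bijection of $\mathbb Z_{n'}$ whose cycles are exactly the sets $\pcal_0,\dots,\pcal_{\ell-1}$ of \eqref{eq:partitionsets}). For each cycle $\pcal_j$ of length $n_j$, restricting $A_{k,n}$ to the $n_j$-dimensional span of the corresponding $f$'s gives a cyclic weighted-shift matrix whose product of weights around the cycle is exactly $\Pi_j = \prod_{t\in\pcal_j}\lambda_{tn/n'}$; such a matrix has characteristic polynomial $\lambda^{n_j} - \Pi_j$. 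On the complementary invariant subspace — spanned by $f_t$ with $t \not\equiv 0 \bmod n'$ — I would show $A_{k,n}$ acts nilpotently, contributing the factor $\lambda^{n-n'}$. The cleanest way to see the nilpotency: the iterated images $t, tk, tk^2,\dots \bmod n$ of such a $t$ must eventually land in (or the weights must eventually vanish), because $\gcd(n,k)>1$ when $c\ge1$ forces $\lambda_{tk^b} = 0$ for some $b$ along the orbit — more precisely, after enough applications of $k$, the index becomes divisible by $\prod p_q^{\beta_q}$ but the resulting $\lambda$ is a sum $\sum_l a_l \omega^{tl}$ at an index $t$ that is not a multiple of $n'$, and iterating brings in a zero weight. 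Assembling the block structure gives \eqref{eq:evalue}.

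The main obstacle I anticipate is the precise bookkeeping on the non-$n'$-divisible part: showing that those $n - n'$ dimensions genuinely all collapse to eigenvalue $0$ (rather than splitting into further weighted cycles) requires carefully tracking, for $t$ with $\gcd(t,n)$ involving the common primes $p_q$, why some $\lambda_{tk^b}$ in the orbit must vanish — this is where the decomposition \eqref{eq:decomposition} into $n'\prod p_q^{\beta_q}$ really gets used, via the fact that $k^b \equiv 0 \bmod p_q^{\beta_q}$ for $b$ large while $\lambda$ evaluated at an index that is a multiple of $n/p_q^{\beta_q} \cdot(\text{something})$ but not of $n$ need not itself be zero — so the argument is instead that the orbit under multiplication-by-$k$ is eventually \emph{absorbed}, and the weighted shift on a path (not a cycle) is nilpotent. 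Handling the case $c = 0$ (i.e.\ $\gcd(n,k)=1$, $n'=n$) is the clean special case where there is no nilpotent block at all. I would first write the proof in that case for clarity and then indicate the modification for general $c$.
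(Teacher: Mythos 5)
Your proof follows essentially the same route as the paper's. The key identity $A_{k,n}f_t = \lambda_t f_{tk \bmod n}$ in the Fourier basis is exactly the content of Lemma~\ref{res:2} ($U^*A_{k,b}U = E_{k,b}\Delta_{0,b,b}$), and the subsequent analysis of the orbit structure of $t\mapsto tk \bmod n$ matches the paper's treatment of the coprime case. Where the two treatments diverge is in the handling of the $n-n'$ nilpotent part: the paper peels it off algebraically by applying Lemma~\ref{res:3} (a rank-drop identity that trades $E_{k,b}\Gamma$ for $\lambda^{b-x}$ times a smaller $E_{k,x}$) iteratively $m = \max_q\lceil\beta_q/\alpha_q\rceil$ times, whereas you argue directly in terms of the functional graph of $t\mapsto tk\bmod n$ (trees hanging off cycles). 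Both are correct and amount to the same thing; yours is perhaps more transparent as a picture, the paper's is more mechanical to verify.

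Two small slips worth flagging, neither of which breaks the argument once you self-correct at the end. First, the span of $\{f_t : n/n' \nmid t\}$ is \emph{not} $A_{k,n}$-invariant, since the orbit of such a $t$ under multiplication by $k$ eventually enters the set of multiples of $n/n'$; what you actually have is that the $n'$-dimensional span of $\{f_t : (n/n')\mid t\}$ is invariant, so the matrix is block-triangular in a suitably ordered Fourier basis and the quotient map on $\mathbb C^n/V$ is nilpotent. Second, the weights $\lambda_{tk^b}$ do \emph{not} vanish along the transient part of the orbit; as you ultimately realize, the nilpotency is purely combinatorial (a weighted shift along an acyclic path has characteristic polynomial $\lambda^{\mathrm{length}}$), and the number-theoretic input $v_{p_q}(tk^b) = v_{p_q}(t) + b\alpha_q \to \infty$ is only needed to show that every orbit is absorbed into the multiples of $n/n'$ after at most $m$ steps, not to produce any zero weight. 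If you delete the ``zero weight'' digression and replace ``complementary invariant subspace'' with ``quotient by the invariant subspace'', the argument is clean.
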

\subsection{Some properties of the eigenvalue partition $\{\pcal_j, 0 \le j < \ell\}$}
We collect some simple but useful properties about the eigenvalue partition in the following lemma.
\begin{lemma}\label{lem:partitionproperties}
 (i) Let $x, y \in \mathbb Z_{n'}$. If  $n'-t_0 \in S(y)$ \ for some $t_0 \in S(x)$, then for every $t \in S(x) $, we have $n' - t \in S(y)$. \\

\noindent (ii)  Fix  $x \in  \mathbb Z_{n'} $. Then $g_x$ divides $g$ for every $g \in \mathcal O_x$. Furthermore, $g_1$ divides $g_x$ for each $x \in \mathbb Z_{n'} $. \\

\noindent (iii)  Suppose $g$ divides $g_1$. Set $m := \gcd(k^g-1,n')$. Let $X(g)$ and $Y(g)$ be
defined as \begin{eqnarray}
X(g):= \Big \{x: x \in\mathbb Z_{n'}   \ \ \text{and}\ \  x \ \ \text{has  order} \ \ g \Big \}, \ \
Y(g) := \Big \{ bn'/m \ : \ 0 \le b < m  \Big \}.
\end{eqnarray}
Then $$X(g) \subseteq Y(g), \ \  \#Y(g) = m\ \ \text{and}\ \ \bigcup_{h : h|g} X(h) = Y(g).$$
\end{lemma}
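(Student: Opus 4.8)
Part (i) is a routine closure argument. Given $t_0 \in S(x)$ with $n'-t_0 \in S(y)$, any $t \in S(x)$ can be written $t = t_0 k^b \bmod n'$ for some $b \ge 0$, since $S(x) = S(t_0)$. Then $n'-t \equiv -t_0 k^b \equiv (n'-t_0)k^b \pmod{n'}$, and since $n'-t_0 \in S(y)$ this shows $n'-t \in S(y)$. So I would just write that sentence.

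For part (ii), the first statement is the standard cyclic-group fact: $\mathcal{O}_x$ is the set of positive $b$ with $xk^b \equiv x \pmod{n'}$, and if $g_x = \min \mathcal{O}_x$ and $g \in \mathcal{O}_x$, write $g = q g_x + r$ with $0 \le r < g_x$; then $xk^r \equiv x k^{g - qg_x}\equiv x \pmod{n'}$ (multiplying the relation $xk^{g_x}\equiv x$ repeatedly), forcing $r = 0$ by minimality. For the second statement I would observe $xk^{g_1}\cdot(\text{something})$—more carefully: since $g_1 = \#S(1)$ is the multiplicative order of $k$ modulo $n'$ (as $1 \in \mathbb{Z}_{n'}$ and $k$ is coprime to $n'$), we have $k^{g_1}\equiv 1 \pmod{n'}$, hence $xk^{g_1}\equiv x \pmod{n'}$ for every $x$, so $g_1 \in \mathcal{O}_x$, and by the first part $g_x \mid g_1$. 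Here I am implicitly using $\gcd(k,n')=1$, which holds by the construction in \eqref{eq:decomposition}.

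Part (iii) is the substantive one. Assume $g \mid g_1$ and set $m = \gcd(k^g-1, n')$. First, $\#Y(g) = m$ is immediate since $Y(g) = \{bn'/m : 0 \le b < m\}$ consists of $m$ distinct elements of $\mathbb{Z}_{n'}$. Next, to show $X(g) \subseteq Y(g)$: if $x$ has order $g$, then $g \in \mathcal{O}_x$, i.e. $x(k^g-1)\equiv 0 \pmod{n'}$, so $n' \mid x(k^g-1)$; writing $k^g - 1 = m u$ and $n' = m v$ with $\gcd(u,v)=1$, we get $v \mid xu$, hence $v \mid x$, i.e. $x$ is a multiple of $n'/m$, so $x \in Y(g)$. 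The reverse-type identity $\bigcup_{h\mid g} X(h) = Y(g)$ needs both inclusions. For "$\subseteq$": if $h \mid g$ and $x$ has order $h$, then $k^h \equiv 1$ on $x$ modulo $n'$, and since $h\mid g$ this gives $k^g\equiv 1$ on $x$, so by the previous argument $x \in Y(g)$; note I also need $h\mid g_1$ to invoke that $h$ is a valid order, which follows since $h \mid g \mid g_1$. For "$\supseteq$": take $x \in Y(g)$, so $n' \mid x(k^g - 1)$, meaning $g \in \mathcal{O}_x$; by part (ii), $g_x \mid g$, so $x$ has some order $h := g_x$ dividing $g$, i.e. $x \in X(h) \subseteq \bigcup_{h\mid g}X(h)$. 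I expect the main obstacle—really just a bookkeeping point rather than a deep one—to be keeping straight that "$x$ has order $h$" for each $x \in Y(g)$ is automatically some divisor of $g$ and that all the implicit coprimality hypotheses ($\gcd(k,n')=1$, and $h \mid g_1$ so that the sets $X(h)$ are the genuine blocks of the eigenvalue partition) are in force; once those are laid out, each inclusion is a two-line modular arithmetic computation.
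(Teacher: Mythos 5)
Your argument is correct and follows essentially the same route as the paper's own proof: the same closure argument for (i), the same Euclidean-division argument for (ii), and for (iii) the same manipulation of $n' \mid x(k^g-1)$ via $m=\gcd(k^g-1,n')$ to get both inclusions. Your version of the reverse inclusion $Y(g)\subseteq\bigcup_{h\mid g}X(h)$ is in fact slightly cleaner than the paper's (which has a small typo writing $0\le b<g$ where it means $0\le b<m$), but the underlying idea is identical.
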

\begin{proof} (i) Since $t \in S(x)  = S(t_0)$, we can write $t = t_0k^b \mod n' $  for some $b \ge 0$. Therefore,   $n'-t = (n'-t_0) k^b \mod n'   \in S(n' - t_0) = S(y)$.

\noindent (ii) Fix $g \in \mathcal O_x$. Since $g_x$ is the smallest element of $\mathcal O_x$, it follows that $g_x \le g$. Suppose, if possible,  $g = qg_x+r$ where $0 < r < g_x$. By the fact $x g_x  = x \mod n'$, it then follows that
\[
x  =  xk^{g}  \mod  n^{\prime} =  xk^{qg_x +r }   \mod  n^{\prime}
   =  xk^r  \mod  n^{\prime}. \]
This implies that  $r \in \mathcal O_x$ and $r <g_x$ which is a contradiction to the fact that $g_x$ is
the smallest element in $\mathcal O_x$. Hence, we must have $r=0$ proving that $g$ divides $g_1$.

Note that $k^{g_1} = 1 \mod  n'$, implying that $xk^{g_1} = x \mod n'$. Therefore $g_1 \in \mathcal O_x$ proving the assertion.

\noindent (iii)
Clearly, $\#Y(g) = m$. Fix $x \in X(h)$  where $h$ divides $g$. Then, $xk^g = x(k^h)^{g/h} = x \mbox{ mod }
n^{\prime}$, since $g/h$ is a positive integer. Therefore $n'$ divides $x(k^g-1)$. So, $n'/m$ divides $x(k^g-1)/m$. But $n'/m$ is
relatively prime to $(k^g-1)/m$ and hence $n'/m$ divides $x$. So, $x=bn'/m$ for some integer $b \ge
0$. Since $0 \le x <n'$, we have $0 \le b < m$, and $x \in Y(g)$, proving $\bigcup_{h : h|g} X(h) \subseteq Y(g)$ and in particular, $ X(g) \subseteq Y(g)$.

On the other hand, take $ 0 \le b < g$. Then $\left( bn'/m\right ) k^g  = \left( bn'/m\right)\mod
n'$. Hence, $g \in \mathcal O_{bn'/m}$ which implies, by part (ii) of the lemma, that $ g_{cn'/m}$ divides $g$. Therefore, $ Y(g) \subseteq \bigcup_{h :
h|g} X(h) $ which completes the proof. \hfill $\Box$
\end{proof}
\begin{lemma} \label{theo:count}
Let $g_1 = q_1^{\gamma_1}  q_2^{\gamma_2}\ldots q_m^{\gamma_m}$
where $q_1 < q_2 < \ldots < q_m$ are primes. Define for $1 \le j \le m$, \[ L_j := \left\{ q_{i_1}q_{i_2} \cdots q_{i_j} : 1 \le i_1 < \ldots < i_j \le m \right\} \] and
 \[G_j = \sum\limits_{l_j \in L_j} \#
Y(g_1/\ell_j) = \sum\limits_{l_j \in L_j} \gcd \left(k^{g_1/\ell_j}-1,n' \right). \]
 Then we have

\noindent
(i) $\# \left\{ x \in \mathbb Z_{n^{\prime}}: g_x < g_1 \right\} = G_1 -
G_2 + G_3 - G_4 + \cdots$.\\

\noindent
(ii) $G_1 - G_2 + G_3 - G_4 + \cdots \le
G_1.$
\end{lemma}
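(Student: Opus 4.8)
The statement is essentially an inclusion–exclusion count, so the plan is to set up the right indexed family of sets and apply the inclusion–exclusion principle, then deal with part (ii) separately as a sign/ordering observation. First I would rewrite the set to be counted. By part (ii) of Lemma~\ref{lem:partitionproperties}, every $g_x$ divides $g_1$, so $\{x \in \mathbb Z_{n'} : g_x < g_1\}$ is exactly $\{x : g_x \text{ is a proper divisor of } g_1\}$, which equals $\bigcup_{q \mid g_1,\ q \text{ prime}} \{x : g_x \mid g_1/q\}$: indeed a proper divisor of $g_1$ must omit at least one prime factor $q_i$ (counted with multiplicity), hence divides $g_1/q_i$. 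Now for a prime $q = q_i$ dividing $g_1$, I claim $\{x : g_x \mid g_1/q\} = Y(g_1/q)$. This follows from part (iii) of Lemma~\ref{lem:partitionproperties} applied with $g = g_1/q$ (which divides $g_1$): that lemma gives $\bigcup_{h \mid g} X(h) = Y(g)$, and $\bigcup_{h \mid g} X(h)$ is precisely the set of $x$ whose order $g_x$ divides $g$. So the set to be counted is $\bigcup_{i=1}^m Y(g_1/q_i)$.

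Next I would apply inclusion–exclusion to this union. The key algebraic point is that intersections of the $Y(g_1/q_i)$ are again sets of the same type: for a subset $\{i_1 < \dots < i_j\}$ I claim $\bigcap_{r=1}^j Y(g_1/q_{i_r}) = Y(g_1/(q_{i_1}\cdots q_{i_j}))$. One inclusion is clear since $Y(g)$ is decreasing in the sense that $g' \mid g \Rightarrow Y(g') \subseteq Y(g)$ is \emph{false} in general — so instead I argue via the order characterization: $x$ lies in $Y(g_1/q_{i_r})$ for all $r$ iff $g_x$ divides every $g_1/q_{i_r}$, which holds iff $g_x$ divides $\gcd_r(g_1/q_{i_r}) = g_1/(q_{i_1}\cdots q_{i_j})$ (here I use that the $q_i$ are \emph{distinct} primes, so the gcd of the $g_1/q_{i_r}$ removes exactly one power of each $q_{i_r}$), and that last set is $Y(g_1/\ell_j)$ with $\ell_j = q_{i_1}\cdots q_{i_j} \in L_j$ by part (iii) again. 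Hence inclusion–exclusion gives
\[
\#\Big\{x : g_x < g_1\Big\} = \sum_{j=1}^m (-1)^{j-1} \sum_{\ell_j \in L_j} \#Y(g_1/\ell_j) = G_1 - G_2 + G_3 - \cdots,
\]
which is part (i).

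For part (ii), the inequality $G_1 - G_2 + G_3 - \cdots \le G_1$ is immediate once part (i) is in hand, since the left-hand side is a cardinality of a set, hence nonnegative, but that only gives a lower bound; the clean argument is simply that the full alternating sum equals a cardinality and $G_1$ alone already overcounts that union (it is $\sum_i \#Y(g_1/q_i) \ge \#\bigcup_i Y(g_1/q_i)$ by subadditivity), so $G_1 - G_2 + G_3 - \cdots = \#\bigcup_i Y(g_1/q_i) \le \sum_i \#Y(g_1/q_i) = G_1$. I expect the main obstacle to be the bookkeeping in the intersection identity $\bigcap_r Y(g_1/q_{i_r}) = Y(g_1/\ell_j)$ — specifically making sure the order-divisibility characterization of $Y(\cdot)$ from Lemma~\ref{lem:partitionproperties}(iii) is stated in a form that is stable under intersection, and correctly handling the multiplicities $\gamma_i$ in $g_1 = q_1^{\gamma_1}\cdots q_m^{\gamma_m}$ (the point being that dividing by the squarefree $\ell_j$ drops each relevant prime's exponent by exactly one, which is all that is needed to detect "proper divisor"). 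Everything else is a routine application of inclusion–exclusion.
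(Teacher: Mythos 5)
Your proof is correct and follows essentially the same route as the paper: the paper verifies, for each $x$ with $g_x < g_1$, that $x$ is counted $\binom{m-h}{j}$ times in $G_j$ (where $h$ is the number of prime exponents in $g_x$ matching those in $g_1$) and invokes $\sum_{j\ge 1}(-1)^{j-1}\binom{m-h}{j}=1$, which is exactly the multiplicity version of the inclusion--exclusion you apply abstractly to $\bigcup_i Y(g_1/q_i)$. One small side remark in your write-up is wrong, though it does not affect the argument: for $g'\mid g\mid g_1$ one \emph{does} have $Y(g')\subseteq Y(g)$, since by Lemma~\ref{lem:partitionproperties}(iii) $Y(g)=\{x: g_x\mid g\}$, so monotonicity is immediate; you correctly bypass this and argue via the order characterization in any case.
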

\begin{proof} Fix $x \in \mathbb Z_{n'} $. By Lemma~\ref{lem:partitionproperties}(ii), $g_x$ divides $g_1$ and hence we can write $g_x = q_1^{\eta_1} \ldots
q_m^{\eta_m}$ where, $0 \le \eta_b \le \gamma_b$ for $1 \le b \le
m$. Since $g_x < g_1$, there is at least one $b$ so that $\eta_b<
\gamma_b$. Suppose that exactly $h$-many $\eta$'s are equal to the
corresponding $\gamma$'s where $0\le h < m$. To keep notation simple, we will assume
that, $\eta_b=\gamma_b,\ 1 \le b \le h$ and $\eta_b<\gamma_b,\ h+1
\le b \le m$.

\noindent (i) Then $x \in Y(g_1/q_b)$ for $h+1 \le b \le m$ and $x
\not \in Y(g_1/q_b)$ for $1 \le b \le h$. So, $x$ is counted
$(m-h)$ times in $G_1$. Similarly, $x$ is counted ${m-h \choose 2}$ times in $G_2$, ${m-h \choose 3}$ times in $G_3$, and so on. Hence, total
number of times $x$ is counted in $(G_1-G_2+G_3-\ldots)$ is \[
{m-h \choose 1} - {m-h \choose 2} + {m-h \choose 3}- \ldots = 1. \]
(ii) Note that $m-h \ge 1$. Further, each element in the set
$\left\{ x \in \mathbb Z_{n^{\prime}}: g_x < g_1 \right\}$ is
counted once in $G_1-G_2+G_3- \ldots$ and $(m-h)$  times
in $G_1$. The result follows immediately.
\end{proof}

 \subsection{Asymptotic negligibility of lower order elements}
 We will now consider the elements in $\mathbb Z_{n{\prime}}$ with
order less than that of $1 \in \mathbb Z_{n{\prime}} $ which has the highest order
$g_1$. We will need the proportion of such elements in $\mathbb
Z_{n{\prime}}$. So, we define
\begin{equation}\label{eq:upsilon}
\upsilon_{k,n'} :=\frac{1}{n'}\# \{ x \in\mathbb Z_{n{\prime}}  : g_x < g_1 \}.
\end{equation}
To derive the LSD in the special cases we have in mind, the asymptotic negligibility of
$\upsilon_{k,n'}$ turns out to be important. The following two lemmas establish upper bounds on
$\upsilon_{k,n'}$ and will be crucially used  later.
\begin{lemma} \label{lem:minus_one}
(i) If $g_1 =2$, then $\displaystyle \upsilon_{k,n'} = \gcd(k-1, n')/n'$.\\

\noindent (ii) If $g_1 \ge 4$ is even, and $k^{g_1/2} = -1 \mbox{ mod } n$, then $\displaystyle
\upsilon_{k,n'} \le 1 + \sum_{b| g_1, \ b \ge 3} \gcd(k^{g_1/b} -1, n').$

\noindent (iii) If $g_1 \ge 2$ and $q_1$ is the smallest prime
divisor of $g_1$, then $\displaystyle \upsilon_{k,n'} <2{n'}^{-1}k^{g_1/q_1}.$
\end{lemma}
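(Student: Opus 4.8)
The plan is to treat the three parts separately but to exploit throughout the structural description of the set $\{x \in \mathbb Z_{n'} : g_x < g_1\}$ furnished by Lemma~\ref{lem:partitionproperties}(iii) and the inclusion--exclusion count in Lemma~\ref{theo:count}. The common thread is: an element of order strictly less than $g_1$ has order dividing $g_1$ (Lemma~\ref{lem:partitionproperties}(ii)), hence lies in $\bigcup_{h \mid g_1,\ h < g_1} X(h)$, and each $X(h) \subseteq Y(g_1/p) = \{bn'/\gcd(k^{g_1/p}-1,n') : 0 \le b < \gcd(k^{g_1/p}-1,n')\}$ for every prime $p \mid (g_1/h)$. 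So bounding $\upsilon_{k,n'}$ reduces to counting unions of the arithmetic-progression sets $Y(\cdot)$, whose sizes are the $\gcd(k^{g_1/p}-1,n')$'s.

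For (i): if $g_1 = 2$, the only proper divisor of $g_1$ is $1$, so $\{x : g_x < g_1\} = X(1) = \{x : xk = x \bmod n'\} = Y(2)$ by Lemma~\ref{lem:partitionproperties}(iii) with $g = 1$ (note $1 \mid g_1 = g_1 = 2$, and $m = \gcd(k-1,n')$). Hence the count is exactly $\gcd(k-1,n')$ and dividing by $n'$ gives the claim; this is an equality, not a bound.

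For (ii): here $g_1 \ge 4$ is even and $k^{g_1/2} = -1 \bmod n$, so in particular $k^{g_1/2} \ne 1 \bmod n'$ (using $n' \ge 3$, which holds since $g_1 \ge 4$ forces $n' > 2$), so no element has order exactly $g_1/2$ arising from that divisor; more precisely the proper divisors $h$ of $g_1$ fall into those dividing $g_1/2$ and $h = g_1/2$ itself. I would split $\{x : g_x < g_1\}$ as $X(g_1/2) \cup \bigcup_{h \mid g_1,\ h \mid g_1/2,\ h<g_1} X(h)$. The set $X(g_1/2)$ is contained in $Y(g_1/q)$ for $q$ the smallest prime factor of $g_1$; if $2 \| g_1$ then $g_1/2$ is the only divisor that is not a divisor of some $g_1/b$ with $b \ge 3$, and one must argue that $|X(g_1/2)| \le 1$ — this follows because $x \in X(g_1/2)$ satisfies $xk^{g_1/2} = x$, i.e. $x(k^{g_1/2}-1) \equiv 0$, while $k^{g_1/2} \equiv -1$ gives $x \cdot (-2) \equiv 0 \bmod n'$, so $x \in \{0\}$ when $n'$ is odd and $x \in \{0, n'/2\}$ otherwise — but $n'/2$ then has order dividing $2 < g_1$, hence is already counted among the $X(h)$, $h \mid g_1/2$. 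Carefully bookkeeping this yields the leading $1$ plus $\sum_{b \mid g_1,\ b \ge 3} \gcd(k^{g_1/b}-1,n')$, using the crude union bound $|\bigcup A_i| \le \sum |A_i|$ over the sets $Y(g_1/b)$, $b \ge 3$. The main obstacle is getting the edge cases in this decomposition right — in particular handling the $2$-part of $g_1$ and making sure the element $n'/2$ (when $n'$ is even) is absorbed into the sum rather than double-counted or dropped.

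For (iii): by Lemma~\ref{theo:count}(i)--(ii), $\#\{x : g_x < g_1\} = G_1 - G_2 + \cdots \le G_1 = \sum_{q \mid g_1 \text{ prime}} \gcd(k^{g_1/q}-1, n')$. Each term is at most $k^{g_1/q} - 1 < k^{g_1/q} \le k^{g_1/q_1}$ since $q \ge q_1$; there are at most $\log_2 g_1 \le$ (some bound), but more cheaply, the term for $q = q_1$ dominates and the remaining sum is bounded by $k^{g_1/q_1}$ again after noting each $\gcd \le k^{g_1/q}-1 < k^{g_1/q}$ and $\sum_{q \ge q_1} k^{-g_1(1/q_1 - 1/q)}$ is a geometric-type sum bounded by a constant; the paper's target of the clean bound $2{n'}^{-1}k^{g_1/q_1}$ then follows by checking $G_1 < 2k^{g_1/q_1}$ directly — the $q=q_1$ term contributes $< k^{g_1/q_1}$ and all other primes $q$ dividing $g_1$ satisfy $g_1/q \le g_1/(q_1^2) \cdot q_1 \le g_1/(2q_1)$ roughly, so their total is $< k^{g_1/q_1}$ as well. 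Dividing by $n'$ gives the result. I expect part (ii) to be the delicate one; parts (i) and (iii) are essentially immediate given the earlier lemmas.
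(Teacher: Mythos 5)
Your overall strategy is the paper's: part (i) exactly as in the paper (your citation of Lemma~\ref{lem:partitionproperties}(iii) with $g=1$ is equivalent to the paper's appeal to Lemma~\ref{theo:count}; note your ``$Y(2)$'' should read $Y(1)$), and part (iii) via $n'\upsilon_{k,n'}\le G_1$ plus a geometric-type bound. The genuine problem is in part (ii). You assert that every proper divisor $h$ of $g_1$ either equals $g_1/2$ or divides $g_1/2$, and accordingly decompose $\{x:g_x<g_1\}$ as $X(g_1/2)\cup\bigcup_{h\mid g_1,\ h\mid g_1/2,\ h<g_1}X(h)$. This is false whenever $h$ has the same $2$-adic valuation as $g_1$ but is smaller than $g_1/2$: for $g_1=12$, the divisor $h=4$ neither equals nor divides $6$. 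The situation is not vacuous: take $n=n'=65$, $k=2$, so that $g_1=12$ and $k^{6}\equiv-1\pmod{65}$; the element $x=13$ has $g_x=4$ and is dropped by your decomposition. The repair is the paper's decomposition: every proper divisor of $g_1$ is $g_1/b$ for some divisor $b\ge2$ of $g_1$; single out $b=2$ and bound $\#X(g_1/2)\le1$ directly (if $x\in X(g_1/2)$ then $xk^{g_1/2}\equiv x$ and $k^{g_1/2}\equiv-1$ give $2x\equiv0\pmod{n'}$, so $x\in\{0,n'/2\}$, and $0\notin X(g_1/2)$ since $g_0=1<g_1/2$), and for $b\ge3$ use $\#X(g_1/b)\le\#Y(g_1/b)=\gcd(k^{g_1/b}-1,n')$ from Lemma~\ref{lem:partitionproperties}(iii). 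Your alternative bookkeeping for $n'/2$ --- that its order divides $2$ and so it is ``absorbed into the sum'' --- also fails at $g_1=4$, where the order of $n'/2$ can be exactly $2=g_1/2$, i.e.\ $b=2$, which is not in the sum; the leading $1$ exists precisely to cover that element, not as slack.

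In part (iii), your displayed inequality $g_1/q\le (g_1/q_1^{2})\,q_1\le g_1/(2q_1)$ is garbled (the middle quantity equals $g_1/q_1$, and the last inequality is false), but the intended estimate is the paper's and does work: for a prime $q\mid g_1$ with $q>q_1$ one has $q_1q\mid g_1$, hence $g_1/q_1-g_1/q=g_1(q-q_1)/(q_1q)\ge q-q_1$, so $k^{g_1/q}\le k^{g_1/q_1}k^{-(q-q_1)}$; summing over the primes dividing $g_1$ gives $G_1<\sum_{q}k^{g_1/q}\le k^{g_1/q_1}\sum_{j\ge0}k^{-j}\le 2k^{g_1/q_1}$ for $k\ge2$, and dividing by $n'$ yields the claim. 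With these repairs your argument reproduces the paper's proof.
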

\begin{proof}  Part (i) is immediate from Lemma
\ref{theo:count} which asserts that $n' \upsilon_{k,n'} = \# Y(1)  = \gcd (k -1, n')$.

\noindent (ii) Fix $x \in \mathbb Z_{n'}$ with $g_x < g_1$. Since $g_x$ divides $g_1$ and $g_x <g_1$, $g_x$ must be of the
form $g_1/b$ for some integer $b \ge 2$ provided  $g_1/b$ is an integer. If $b=2$, then
$ xk^{g_1/2} = x k^{g_x}  =  x  \mod  n^{\prime}$. But $k^{g_1/2} = -1
\mbox{ mod } n^{\prime}$ and so, $xk^{g_1/2} = -x \mbox{ mod }
n^{\prime}$. Therefore, $2x = 0 \mbox{ mod } n^{\prime}$ and $x$
can be either $0$ or $n^{\prime}/2$, provided, of course,
$n^{\prime}/2$ is an integer. But $ g_0 =1 < 2 \le g_1/2$ so $x$ cannot be $0$. So, there is at most one element in the set $X(g_1/2)$. Thus we  have,
\begin{align*} \# \{
x \in \mathbb Z_{n^{\prime}} : g_x < g_1 \}  & = \#X(g_1/2)
+ \sum_{b| g_1, \ b \ge 3} \# \{ x \in  \mathbb Z_{n^{\prime}} : g_x
=g_1/b \}  \\
& = \#X(g_1/2) + \sum_{b| g_1, \ b \ge 3}
\# X(g_1/b)   \\
&
\le  1+ \sum_{b| g_1, \ b \ge 3} \#Y(g_1/b)  \quad [ \text{by Lemma \ref{lem:partitionproperties}(iii)}] \\
&=  1 + \sum_{b| g_1, \ b \ge 3} \gcd(k^{g_1/b} -1, n') \quad [ \text{by Lemma \ref{lem:partitionproperties}(iii)}. ]
\end{align*}
(iii) As in Lemma \ref{theo:count}, let $g_1 = q_1^{\gamma_1}
q_2^{\gamma_2}\ldots q_m^{\gamma_m}$ where $q_1 < q_2 < \ldots <
q_m$ are primes. Then by Lemma \ref{theo:count},
\begin{align*} n^{\prime} \times \upsilon_{k,n^{\prime}}  =  G_1-G_2+G_3-G_4+ \ldots \le G_1 & = \sum\limits_{b=1}^{m} \gcd(k^{g_1/q_b}-1,n^{\prime})
\\ & <  \sum\limits_{b=1}^{m} k^{g_1/q_b}   \le  2k^{g_1/q_1}
\end{align*}
where the last inequality follows from the observation
\[  \sum\limits_{b=1}^{m} k^{g_1/q_b} \le k^{g_1/q_1}\sum\limits_{b=1}^{m} k^{ - g_1(q_b - q_1)/q_1 q_b}  \le k^{g_1/q_1}\sum\limits_{b=1}^{m} k^{ - (q_b - q_1) } \le k^{g_1/q_1} \sum\limits_{b=1}^{m} k^{ - (b - 1) } \le 2 k^{g_1/q_1}. \]
\end{proof}
\begin{lemma}\label{lem:gcdab}
Let  $b$ and $c$ be two fixed positive integers. Then for any integer  $k \ge2 $, the following inequality holds  in each of the four cases,
\[\gcd(k^b \pm 1, k^c \pm 1) \le k^{ \gcd( b, c)} +1. \]
\end{lemma}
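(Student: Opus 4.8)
The statement to prove is Lemma~\ref{lem:gcdab}: for fixed positive integers $b,c$ and any integer $k \ge 2$,
\[
\gcd(k^b \pm 1, k^c \pm 1) \le k^{\gcd(b,c)} + 1
\]
in each of the four sign combinations.

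\medskip

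\noindent\textbf{Proof proposal.}

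The plan is to reduce everything to the single well-known identity $\gcd(k^b - 1, k^c - 1) = k^{\gcd(b,c)} - 1$, which holds for all $k \ge 2$ and can itself be proved by the Euclidean-algorithm observation that $k^b - 1 \equiv k^{b - c} - 1 \pmod{k^c - 1}$ when $b \ge c$, mirroring the subtractive Euclidean algorithm on the exponents. Write $d = \gcd(b,c)$ and let $D$ denote whichever of $k^b \pm 1$, $k^c \pm 1$ we are looking at; the key point is that every such $D$ divides $k^{2b} - 1$ or $k^{2c} - 1$ respectively (since $k^b + 1 \mid k^{2b} - 1$, and trivially $k^b - 1 \mid k^{2b}-1$). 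Hence $\gcd(k^b \pm 1, k^c \pm 1)$ divides $\gcd(k^{2b} - 1, k^{2c} - 1) = k^{\gcd(2b, 2c)} - 1 = k^{2d} - 1$.

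So in all four cases the gcd in question divides $k^{2d} - 1 = (k^d - 1)(k^d + 1)$. It remains to argue this gcd cannot be large. I would split according to the signs. If both signs are $-$, the identity gives the gcd exactly equal to $k^d - 1 \le k^d + 1$, done. If both signs are $+$: let $e = \gcd(k^b + 1, k^c + 1)$. Since $e \mid k^b + 1$ and $e \mid k^c + 1$, also $e \mid k^b - k^c$, hence $e \mid \gcd(k^b + 1, k^{|b-c|} - 1)$ and one can run the Euclidean algorithm on the exponents keeping track of whether one is reducing a "$+1$" term or a "$-1$" term. Cleaner: $e \mid k^{2b} - 1$ and $e \mid k^{2c} - 1$ so $e \mid k^{2d} - 1$; on the other hand I claim $e \mid k^{2d} - 1$ together with $e \mid k^b + 1$ forces $e \le k^d + 1$. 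Indeed, writing $b = d b'$, $c = d c'$ with $\gcd(b', c') = 1$, at least one of $b', c'$ is odd (they are coprime), say $b'$ is odd; then $k^d + 1 \mid k^{db'} + 1 = k^b + 1$, and modulo $e$ we have $k^b \equiv -1$ and $k^{2d} \equiv 1$, so $k^d$ is a square root of $1$ mod $e$ that is consistent with $(k^d)^{b'} \equiv -1$; since $b'$ is odd, $(k^d)^{b'} \equiv k^d \pmod{\text{the subgroup}}$... I would instead just observe directly: $e \mid \gcd(k^b + 1, k^{2d} - 1)$, and since $b'$ odd, $k^b + 1 = (k^d + 1)(k^{d(b'-1)} - k^{d(b'-2)} + \cdots + 1)$, while $k^{2d} - 1 = (k^d+1)(k^d - 1)$; a short computation modulo $k^d + 1$ shows $\gcd\big((k^{d(b'-1)} - \cdots + 1),\, k^d - 1\big)$ divides $b'$ which is coprime to... — at this point the mixed-sign bookkeeping is the part that needs care, and I expect it to be the main obstacle. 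The remaining two mixed cases ($+,-$ and $-,+$) are symmetric to each other: there $e \mid k^b + 1$ and $e \mid k^c - 1$, so $e \mid k^{2b} - 1$ and $e \mid k^c - 1$, giving $e \mid k^{\gcd(2b,c)} - 1$, which is $k^d - 1$ or $k^{2d} - 1$ depending on the parity of $b' = b/d$ (note $\gcd(2b,c) = d\gcd(2b', c') \in \{d, 2d\}$ since $\gcd(b',c')=1$); either way $e \le k^{2d} - 1$, and one then refines using $e \mid k^b + 1$ to bring it down to $k^d + 1$ exactly as in the $(+,+)$ case.

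\medskip

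\noindent I would actually streamline the whole argument to avoid case-chasing: prove once and for all that \emph{any} common divisor $e$ of $k^b \pm 1$ and $k^c \pm 1$ satisfies (a) $e \mid k^{2d} - 1$, shown above, and (b) $e \le k^d + 1$. For (b): from $e \mid k^{2d}-1$ the multiplicative order of $k$ modulo $e$ divides $2d$; call it $f$, so $f \mid 2d$ and $e \mid k^f - 1$. If $f \mid d$ then $e \le k^f - 1 \le k^d - 1 < k^d + 1$. Otherwise $f \nmid d$ but $f \mid 2d$, which forces $d/\gcd(d,f)$... concretely $f = 2d/\gcd(2d/f, \ldots)$ — the only way $f \mid 2d$, $f \nmid d$ is that $f$ is even with $f/2 \mid d$ and $v_2(f) = v_2(d) + 1$; then $k^{f/2} \equiv -1 \pmod e$ (it is an order-$2$ element, and the only candidates dividing $k^{2d}-1$ with $k^{f/2}$ self-inverse... it equals $-1$ since otherwise order would be $f/2$), so $e \mid k^{f/2} + 1 \le k^d + 1$ because $f/2 \mid d$. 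That gives (b) cleanly, hence $\gcd(k^b \pm 1, k^c \pm 1) \le k^d + 1 = k^{\gcd(b,c)} + 1$ in all four cases simultaneously. The delicate step is justifying "$k^{f/2} \equiv -1 \pmod e$" — i.e.\ that the order-two element is genuinely $-1$ and not merely some involution — which is where I would be most careful, possibly by instead arguing that if $k^{f/2} \not\equiv -1$ then $k$ would already have order $f/2$ modulo $e$, contradicting minimality of $f$. $\hfill\Box$
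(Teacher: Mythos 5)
The paper's proof runs the subtractive Euclidean algorithm directly on the exponents: it uses $k^c \pm 1 = k^{c-b}(k^b+1) + (-k^{c-b} \pm 1)$ to replace $\gcd(k^b+1,\, k^c\pm 1)$ by $\gcd(k^b+1,\, k^{c-b}\mp 1)$ (and similarly for $k^b-1$), iterating until one exponent becomes $\gcd(b,c)$ and then bounding trivially by the smaller argument. Your streamlined route --- reduce to $e\mid k^{2d}-1$ with $d=\gcd(b,c)$ and then control the multiplicative order $f$ of $k$ modulo $e$ --- is a genuinely different, more structural strategy, and step (a) is sound.

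But step (b) has a real gap which your proposed fallback does not close. In the case $f\nmid d$, $f\mid 2d$, you need $k^{f/2}\equiv -1\pmod e$; you correctly flag this as the delicate point, but your argument \emph{``if $k^{f/2}\not\equiv -1$ then $k$ would already have order $f/2$ modulo $e$''} is false. From minimality of $f$ you only know $k^{f/2}\not\equiv 1$, so $k^{f/2}$ is \emph{some} nontrivial involution in $(\mathbb{Z}/e\mathbb{Z})^{\times}$; but that group can have several involutions besides $-1$ (take $e$ divisible by $8$, or by two distinct odd primes), so $k^{f/2}\not\equiv 1$ does not force $k^{f/2}\equiv -1$, nor does $k^{f/2}\not\equiv -1$ force $k$ to have order $f/2$. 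The correct repair must use the hypothesis $e\mid k^b\pm1$, which you discarded upon passing to $e\mid k^{2d}-1$: if both signs are $-$, then $f\mid b$, $f\mid c$, so $f\mid d$ and $e\le k^d-1$; otherwise say $e\mid k^b+1$, so for $e>2$ we get $f\nmid b$, $f\mid 2b$, hence $f$ even, $f/2\mid b$ with $b/(f/2)$ odd, and from $(k^{f/2})^{b/(f/2)}\equiv -1$ with odd exponent and $(k^{f/2})^2\equiv 1$ one concludes $k^{f/2}\equiv -1$. Doing the same on the $c$ side (or using $f\mid c$ if that sign is $-$) gives $f/2\mid c$, hence $f/2\mid d$ and $e\mid k^{f/2}+1\le k^d+1$. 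Note that this repair reintroduces exactly the sign case split you were trying to avoid, whereas the paper's exponent-Euclidean reduction handles all four sign combinations in one uniform pass and never touches group structure.
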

\begin{proof}
The assertion trivially follows if one of $b$ and $c$ divides other. So, we assume, without loss, that $b < c$ and $b$ does not divide $c$. Since, $k^c \pm1  =k^{c-b}(k^b +1) + (-k^{c-b}  \pm 1)$,  we can write
\[  \gcd(k^b + 1, k^c \pm 1)   = \gcd(k^b + 1, k^{c - b} \mp 1).  \]
Similarly, \[  \gcd(k^b - 1, k^c \pm 1)   = \gcd(k^b - 1, k^{c - b} \pm 1).  \]
Moreover, if we write  $c_1 = c - \lfloor c/b \rfloor b$, then  by repeating the above step  $\lfloor c/b \rfloor$ times, we can see that  $\gcd(k^b \pm 1, k^c \pm 1) $ is equal to one of $\gcd(k^b \pm 1, k^{c_1} \pm 1)$. Now if $c_1$ divides $b$, then $\gcd(b, c) = c_1$ and we are done. Otherwise, we can now repeat the whole argument  with $b = c_1$ and $c = b$ to deduce that
$\gcd(k^b \pm 1, k^{c_1} \pm 1)$ is one of $\gcd(k^{b_1} \pm 1, k^{c_1} \pm 1)$ where $b_1 =  b - \lfloor b/c_1 \rfloor c_1$. We continue in the similar fashion by reducing each time one of the two exponents of $k$ in the gcd and  the lemma  follows once we recall Euclid's recursive algorithm for computing the  gcd of two numbers.
\end{proof}
\begin{lemma} \label{lem:fkn}
(i) Fix $g \ge 1$. Suppose $k^g  = -1+ s n$,  $n \to \infty$ with $s=1$ if $g=1$ and $s = o(n^{p_1 -1})$ if $g>1$ where $p_1$ is the smallest prime divisor of $g$. Then $g_1=2g$ for all but finitely many $n$ and  $\upsilon_{k,n} \to 0.$\\

\noindent
(ii) Suppose $k^g  = 1+ s n$, $g \geq 1$ fixed, $n \to \infty$ with  $s=0$ if $g=1$ and $s = o(n^{p_1 -1})$ where $p_1$ is the smallest prime divisor of $g$. Then $g_1=g$  for all but finitely many $n$ and $\upsilon_{k,n} \to 0.$
\end{lemma}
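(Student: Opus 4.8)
The plan is to treat the two parts in parallel, since the structure is identical: in each case we must (a) identify $g_1$ (the order of $1 \in \mathbb Z_{n'}$, and note $n'=n$ here because $\gcd(k,n)=1$ is forced by $k^g = \mp 1 + sn$), and (b) show the proportion $\upsilon_{k,n}$ of lower-order elements tends to $0$. For part (ii), if $k^g = 1+sn$ then $k^g \equiv 1 \pmod n$, so $g \in \mathcal O_1$ and hence $g_1 \mid g$ by Lemma~\ref{lem:partitionproperties}(ii). To show $g_1 = g$ (eventually), I would argue that a proper divisor $h = g/b$ of $g$ cannot be the order: $g_1 = h$ would force $n \mid k^h - 1$, but $k^h - 1 \le k^{g/2} \approx (sn)^{1/2}\cdot n^{?}$ — more precisely, $k^g = 1 + sn$ with $s = o(n^{p_1-1})$ gives $k = (1+sn)^{1/g} = o(n^{p_1/g}) \le o(n)$ (using $p_1 \le g$), so $k^{g/b} = o(n^{p_1/b}) = o(n)$ for $b \ge p_1$, i.e. for any $b$ that is an actual divisor $>1$ of $g$ — whence $k^{g/b} - 1 < n$ for large $n$ and divisibility by $n$ is impossible unless $k^{g/b} = 1$, which is absurd for $k \ge 2$. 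For part (i), $k^g = -1 + sn$ gives $k^{2g} = 1 \pmod n$ (up to lower-order terms: $k^{2g} = (sn-1)^2 = 1 - 2sn + s^2n^2 \equiv 1 \pmod n$), so $2g \in \mathcal O_1$ and $g_1 \mid 2g$; since $k^g \equiv -1 \not\equiv 1 \pmod n$ for large $n$ (as $-1 \ne 1$ mod $n$ once $n > 2$), $g_1 \nmid g$, and combined with $g_1 \mid 2g$ and the argument ruling out proper divisors $g_1 = 2g/b$ with $b \ge 2$ (same size estimate, now $k^{2g/b}$ against $n$), we get $g_1 = 2g$.

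For the negligibility of $\upsilon_{k,n}$, I would invoke Lemma~\ref{lem:minus_one}(iii): with $g_1 = g$ (case (ii)) or $g_1 = 2g$ (case (i)), and $q_1$ the smallest prime divisor of $g_1$,
\[
\upsilon_{k,n} < \frac{2 k^{g_1/q_1}}{n}.
\]
In case (ii), $q_1 = p_1$ (smallest prime divisor of $g$, which is also the smallest prime divisor of $g_1 = g$), so $k^{g_1/q_1} = k^{g/p_1} = (1+sn)^{1/p_1} = O\big((sn)^{1/p_1}\big) = O\big(n^{1/p_1} \cdot s^{1/p_1}\big)$, and since $s = o(n^{p_1-1})$ this is $o\big(n^{1/p_1} \cdot n^{(p_1-1)/p_1}\big) = o(n)$, giving $\upsilon_{k,n} \to 0$. (When $g=1$, $s=0$, $k=1$, $g_1=1$, and $\upsilon \equiv 0$ trivially — or rather this degenerate sub-case should be noted separately since "smallest prime divisor of $1$" is vacuous.) In case (i), $g_1 = 2g$: if $g > 1$ then the smallest prime divisor of $2g$ is $2$ if $g$ is... no — the smallest prime divisor of $2g$ is $2$ always (since $2 \mid 2g$), so $q_1 = 2$ and $k^{g_1/q_1} = k^g = sn - 1 = o(n^{p_1})$. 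Hmm, that only gives $\upsilon < 2(sn-1)/n \approx 2s$ which does not go to $0$ unless $s$ is bounded. So here the cruder bound (iii) is insufficient and I must use Lemma~\ref{lem:minus_one}(ii) instead.

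The main obstacle is precisely this case-(i) estimate with $g \ge 2$. I would apply Lemma~\ref{lem:minus_one}(ii) (valid since $g_1 = 2g \ge 4$ even and $k^{g_1/2} = k^g = -1 \pmod n$):
\[
\upsilon_{k,n} \le \frac{1}{n}\Big(1 + \sum_{b \mid 2g,\; b \ge 3} \gcd(k^{2g/b} - 1,\, n)\Big).
\]
Now $n \mid k^{2g} - 1 = (k^g-1)(k^g+1)$, and $k^g + 1 = sn$, so $n \mid s(k^g - 1)(k^g+1)/\ldots$ — better: I'd use Lemma~\ref{lem:gcdab} to bound $\gcd(k^{2g/b}-1, n)$. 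Since $n = (k^g+1)/s$ divides $k^{2g}-1$, we have $\gcd(k^{2g/b}-1, n) \le \gcd(k^{2g/b}-1, k^{2g}-1) \le k^{\gcd(2g/b,\, 2g)}+1 = k^{2g/b}+1$. For $b \ge 3$ this is $\le k^{2g/3} + 1 = O\big((sn)^{2/(3g)}\big)$, which for $g \ge 2$ is $O\big((sn)^{1/3}\big)$; with $s = o(n^{p_1-1})$ and $p_1 \le g$, $(sn)^{1/3} = o\big(n^{p_1/3}\big)$, and $p_1/3 < 1$ when $p_1 \le 2$, i.e. when $g$ is even; for $g$ odd, $p_1 \ge 3$ and one needs $2p_1/(3g) < 1$, i.e. $2p_1 < 3g$, which holds since $p_1 \le g$ gives $2p_1 \le 2g < 3g$. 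Dividing the $O(1)$-many such terms by $n$ then yields $\upsilon_{k,n} \to 0$. I would organize the write-up by first disposing of $g = 1$ trivially, then handling $g \ge 2$ in (i) via Lemmas~\ref{lem:minus_one}(ii) and~\ref{lem:gcdab} as above, and (ii) via Lemma~\ref{lem:minus_one}(iii) directly; the identification of $g_1$ in each case is the short size-comparison argument sketched above, and the only delicate bookkeeping is making sure the exponent inequalities $2g/b \le$ (appropriate power) interact correctly with $s = o(n^{p_1-1})$.
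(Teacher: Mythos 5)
Your treatment of part (ii) and of the degenerate $g=1$ cases is correct and matches the paper: $g_1\mid g$, a size comparison rules out $g_1\le g/p_1$ (since $k^{g/p_1}=(1+sn)^{1/p_1}=o(n)$ cannot be $\equiv 1 \bmod n$ unless it equals $1$), and Lemma~\ref{lem:minus_one}(iii) gives $\upsilon_{k,n}<2k^{g/p_1}/n=o(1)$. You also correctly diagnosed that Lemma~\ref{lem:minus_one}(iii) is useless in case (i) and that one must switch to Lemma~\ref{lem:minus_one}(ii). But your case (i) argument for $g\ge 2$ has a genuine gap, in two places. First, to identify $g_1=2g$ you invoke the ``same size estimate, now $k^{2g/b}$ against $n$''; this estimate is false in general. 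With $k^g=sn-1$ and $s=o(n^{p_1-1})$ one only gets $k^{2g/b}\asymp (sn)^{2/b}$, which can far exceed $n$: e.g.\ $g=3$, $p_1=3$, $s\asymp n$ (allowed, since $o(n^2)$), so $k\asymp n^{2/3}$ and for $b=3$, $k^{2g/b}=k^2\asymp n^{4/3}\gg n$. So the possibility $g_1=2g/3$ is not excluded by size alone. Second, in the negligibility bound you apply Lemma~\ref{lem:gcdab} against $k^{2g}-1$, which gives $\gcd(k^{2g/b}-1,n)\le k^{\gcd(2g/b,\,2g)}+1=k^{2g/b}+1$, i.e.\ nothing beyond the trivial bound; your subsequent display $k^{2g/3}+1=O\big((sn)^{2/(3g)}\big)$ is an exponent slip (it is $O\big((sn)^{2/3}\big)$, since $k\asymp(sn)^{1/g}$), and with the correct exponent the bound is only $o(n^{2p_1/3})$, which is not $o(n)$ for any $p_1\ge 2$. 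So as written, neither the identification of $g_1$ nor $\upsilon_{k,n}\to 0$ is established in case (i).

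The missing idea, which is how the paper proceeds, is to exploit that $n$ divides $k^g+1$ \emph{exactly} ($k^g+1=sn$) and to run Lemma~\ref{lem:gcdab} against $k^g+1$ rather than against $k^{2g}-1$: for any divisor $2g/b$ of $2g$ with $b\ge 3$,
\[
\gcd\big(k^{2g/b}-1,\,n\big)\le \gcd\big(k^{2g/b}-1,\,k^{g}+1\big)\le k^{\gcd(2g/b,\,g)}+1,
\]
and since $\gcd(2g/b,g)$ divides $g$ and is at most $2g/b<g$, it is at most $g/p_1$, so the right side is at most $k^{g/p_1}+1\le (sn)^{1/p_1}+1=o(n)$ precisely because $s=o(n^{p_1-1})$. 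This single estimate does double duty: it rules out $g_1=2g/b$ for every $b\ge 3$ (such a $g_1$ would force $\gcd(k^{g_1}-1,n)=n$), and, fed into Lemma~\ref{lem:minus_one}(ii) once $g_1=2g$ is known (note $g_1\ne g$ since $k^g\equiv-1$), it gives $\upsilon_{k,n}\le n^{-1}\big(1+\sum_{b\mid 2g,\,b\ge 3}o(n)\big)\to 0$, the sum having boundedly many terms since $g$ is fixed. In the illustrative case above ($g=3$, $b=3$) the improvement is from exponent $\gcd(2,6)=2$ to $\gcd(2,3)=1$, i.e.\ from $k^2\asymp n^{4/3}$ down to $k+1\asymp n^{2/3}=o(n)$, which is exactly what is needed.
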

\begin{proof} (i) First note that $\gcd(n, k)=1$ and therefore
$n'=n$.  When $g=1$, it is easy to check that $g_1$ =2 and by Lemma~\ref{lem:minus_one}(i), $\upsilon_{k, n} \le 2/n$.

Now assume $g>1$. Since $k^{2g} = (sn-1)^2 =1 \mod n$, $g_1$ divides $2g$.
Observe that $g_1 \ne g =2g/2$ because $k^g = -1 \mod n$.

If $ g_1 = 2g /b$,  where $b$ divides $g$ and $b \ge 3$, then by Lemma~\ref{lem:gcdab},
\[ \gcd(k^{g_1}-1, n) = \gcd\big (k^{2g/b}-1, (k^{g}+1)/s \big) \le \gcd\big (k^{2g/b}-1, k^{g}+1\big ) \le k^{ \gcd(2g/b, \ g)} +1.\]
Note that since  $\gcd(2g/b, g)$ divides $g$ and $\gcd(2g/b, g) \le 2g/b < g$, we have $\gcd(2g/b, g) \le g/p_1$. Consequently,
\begin{equation}\label{eq:2gb_neg}
\gcd(k^{2g/b}-1, n)  \le k^{g/p_1} +1 \le (sn - 1)^{1/p_1} +1  = o(n),
\end{equation}
which is a contradiction to the fact that $k^{g_1} = 1 \mod n $ which implies that
$\gcd(k^{g_1}-1, n) = n$. Hence, $g_1 = 2g$.
Now by Lemma \ref{lem:minus_one}(ii) it is enough to show that for any fixed $ b \ge 3$ so that $b$ divides $g_1$,
 \[ \gcd(k^{g_1/b} -1, n)/n  = o(1) \ \ \text{as}\ \  n \to \infty,\]
which we have already proved in \eqref{eq:2gb_neg}.

\noindent
(ii)  Again $\gcd(n, k)=1$ and
$n'=n$.  The case when $g=1$ is trivial as then we have $g_x = 1$ for all $x \in \mathbb Z_n$ and $\upsilon_{k,n}= 0$.

Since $k^g = 1 \mod n$, $ g_1$ divides  $g$.  If $g_1< g$, then $g_1 \le g/p_1$ which implies that $k^{g_1} \le k^{g/p_1} = (sn+1)^{1/p_1} = o(n)$, which is a contradiction. Thus, $g = g_1$.

  Now Lemma \ref{lem:minus_one}(iii)
immediately yields,
 \[ \upsilon_{k,n} < \frac{2 k^{g_1/p_1}}{n} \le \frac{2 (1+
sn)^{1/p_1}}{n}  = o(1). \]
\end{proof}

\section{Proof of Theorem~\ref{thm:degenerate}, Theorem~\ref{theo:lsd12} and Theorem~\ref{theo:lsd45}}\label{sec:lsd}

\subsection{Properties of eigenvalues of Gaussian circulant matrices} Suppose $\{a_l\}_{ l \ge 0}$ are independent, mean zero and variance one random variables. Fix $n$.
For $1 \le t < n$, let us split
$\lambda_t$ into real and complex parts as
$\lambda_t = a_{t,n} + i b_{t, n}$, that is,
\begin{equation}\label{eq:atnbtn}
a_{t, n}= \sum\limits_{l=0}^{n-1} a_{l} \cos \left( \frac{2 \pi t l}{n}\right),\ \  b_{ t,n}=\sum\limits_{l=0}^{n-1} a_{l} \sin \left( \frac{2 \pi t l}{n}\right).\end{equation}
\begin{align}\label{eq:ortho}
 \sum_{l=0}^{n-1} \cos \left( \frac{2 \pi t l}{n}\right) \sin \left( \frac{2 \pi t' l}{n}\right)=0, \ \forall t, t'
&\text{ and } \ \sum_{l=0}^{n-1} \cos^2 \left( \frac{2 \pi t l}{n}\right)=\sum_{l=0}^{n-1} \sin^2 \left( \frac{2 \pi t l}{n}\right)= n/2 \quad \forall 0< t < n .\\
\sum_{l=0}^{n-1} \cos \left( \frac{2 \pi t l}{n}\right) \cos \left( \frac{2 \pi t' l}{n}\right)=0,
\ &\text{ and }  \sum_{l=0}^{n-1} \sin \left( \frac{2 \pi t l}{n}\right) \sin \left( \frac{2 \pi t'
l}{n}\right)=0 \quad \forall t \ne t' \ \  (\mod n).
\end{align}
For $z \in \mathbb C$, by  $\bar z$ we mean, as usual, the complex conjugate of $z$. For all $0< t, t' < n$, the following identities can easily be verified  using the above orthogonality relations
$$\E( a_{t,n}b_{t,n}) = 0, \ \ \text{and} \ \   \E  (a_{t,n}^2 )= \E(b_{t,n}^2 ) = n/2,$$
$$\bar \lambda_t = \lambda_{n-t}, \  \ \E(
\lambda_t \lambda_{t'}) = n \I( t+t'=n), \ \ \E(
|\lambda_t|^2)=n.$$
The following Lemma will be used in the proof of Theorem~\ref{theo:lsd12} and Theorem~\ref{theo:lsd45}.
\begin{lemma}\label{lem:product}
\noindent Fix $k$ and $n$. Suppose that  $\{a_l\}_{0 \le l < n}$ are i.i.d. standard normal random variables. Recall the notations $\pcal_j$ and $\Pi_j$ from Section~\ref{section:eigenvalues}. Then \\

\noindent(a) For every
$n$,  $n^{-1/2}a_{t, n}, n^{-1/2}b_{t, n}$, $ 0 \leq t \le n/2$ are i.i.d. normal with mean zero and variance $1/2$.
Consequently, any subcollection $\{\Pi_{j_1}, \Pi_{j_2}, \ldots\}$ of $\{\Pi_j\}_{0 \le j < \ell}$, so that no member of the corresponding partition blocks
$\{\pcal_{j_1}, \pcal_{j_2}, \ldots\}$ is a conjugate of any other,  are mutually independent.
 \\

\noindent(b) Suppose $1 \le j < \ell$ and $\pcal_j \cap (n - \pcal_j) =\emptyset$. Then all
$n^{-n_j/2}\Pi_j$ are distributed as $n_j$-fold product of i.i.d.\ random
variables,  each of which is distributed as $E^{1/2} U$ where $E$ and $U$ are independent random variables, $E$ is exponential with mean one and $U$ is uniform over the unit circle in $\mathbb R^2$.\\

\noindent (c) Suppose $1 \le j < \ell$ and $\pcal_j  =  n - \pcal_j$ and $n/2 \not \in\pcal_j$.  Then $n^{-n_j/2}\Pi_j$  are distributed as $(n_j/2)$-fold product of i.i.d.\ exponential random variables  with mean one.
\end{lemma}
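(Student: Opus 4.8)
The plan is to exploit the fact that the vector $(a_{t,n}, b_{t,n})_{0 \le t \le n/2}$ is a linear image of the Gaussian input $(a_0, \ldots, a_{n-1})$, so it is jointly Gaussian, and then read off the covariance structure from the orthogonality relations \eqref{eq:ortho} already recorded. For part (a), I would observe that $a_{t,n}$ and $b_{t,n}$ are finite linear combinations of independent standard normals, hence jointly normal; the displayed identities $\E(a_{t,n} b_{t,n}) = 0$, $\E(a_{t,n}^2) = \E(b_{t,n}^2) = n/2$, together with $\E(a_{t,n} a_{t',n}) = \E(b_{t,n}b_{t',n}) = 0$ for $t \ne t' \pmod n$ (which follow from the cosine–cosine and sine–sine orthogonality relations), show that the $2(\lfloor n/2 \rfloor + 1)$ (or appropriately fewer, accounting for $t=0$ and $t = n/2$ when $n$ is even, where $b_{t,n} = 0$) real and imaginary parts are pairwise uncorrelated, hence independent, each $N(0, n/2)$ after noting $\bar\lambda_t = \lambda_{n-t}$ so that $\lambda_t$ for $t > n/2$ is determined by $\lambda_{n-t}$. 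The consequence about subcollections $\{\Pi_{j_1}, \Pi_{j_2}, \ldots\}$ is then immediate: each $\Pi_j = \prod_{t \in \pcal_j} \lambda_{tn/n'}$ is a function of $\{\lambda_t : t \in \pcal_j\}$, and since $n' = n$ in the intended applications (or more precisely since the indices $tn/n'$ range over the relevant residues), distinct blocks that are not conjugates of one another involve disjoint sets of the independent coordinates $\{a_{t,n}, b_{t,n} : 0 \le t \le n/2\}$, so the corresponding $\Pi_j$'s are independent by the grouping lemma for independent random variables.

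For part (b), when $\pcal_j \cap (n - \pcal_j) = \emptyset$, the block $\pcal_j$ contains, for each of its elements $t$, exactly one of $t$ or $n - t$; thus the variables $\{\lambda_t : t \in \pcal_j\}$ are a collection of $n_j$ independent complex Gaussians, each of the form $a_{t,n} + i b_{t,n}$ with $a_{t,n}, b_{t,n}$ i.i.d. $N(0, n/2)$. Writing such a $\lambda_t$ in polar form, $|\lambda_t|^2 / n$ is a sum of two independent $N(0,1/2)^2$ terms, i.e. exponential with mean one, and the argument is uniform on $[0, 2\pi)$ and independent of the modulus — this is the standard fact that a rotationally invariant complex Gaussian decomposes as $E^{1/2} U$ with $E$ exponential mean one and $U$ uniform on the unit circle, independent. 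Hence $n^{-1/2} \lambda_t \stackrel{d}{=} E^{1/2} U$, and $n^{-n_j/2} \Pi_j$ is the product of $n_j$ such i.i.d. factors.

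For part (c), when $\pcal_j = n - \pcal_j$ and $n/2 \notin \pcal_j$, the block splits into $n_j / 2$ conjugate pairs $\{t, n-t\}$, and $\lambda_{n-t} = \bar\lambda_t$, so $\lambda_t \lambda_{n-t} = |\lambda_t|^2$; distinct pairs use disjoint independent coordinates, so $n^{-n_j/2} \Pi_j = \prod_{\text{pairs}} |\lambda_t|^2 / n$ is a product of $n_j/2$ i.i.d. copies of $|\lambda_t|^2/n$, each exponential with mean one as computed above. The one point requiring a little care — the main (minor) obstacle — is the bookkeeping at the exceptional indices $t = 0$ and, for even $n$, $t = n/2$, where $\lambda_t$ is real rather than complex Gaussian; but in parts (b) and (c) these are excluded by hypothesis ($\pcal_j \cap (n-\pcal_j) = \emptyset$ rules out $t = 0$ and $t = n/2$ in (b), and $n/2 \notin \pcal_j$ plus $0 \notin \pcal_j$ for $j \ge 1$ in (c)), so the decomposition goes through cleanly. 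The other place to be careful is simply to confirm that the relevant indices $tn/n'$ appearing in the definition of $\Pi_j$ coincide with the indices $t$ in the range $0 \le t \le n/2$ (up to conjugation) in the cases at hand, which follows since in all intended applications $\gcd(k,n) = 1$ forces $n' = n$.
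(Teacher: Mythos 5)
Your proposal is correct and follows essentially the same route as the paper: joint Gaussianity of $(a_{t,n},b_{t,n})$ plus the orthogonality relations to get independence and the $N(0,n/2)$ marginals, the decomposition of a rotationally invariant complex Gaussian as $E^{1/2}U$ for part (b), and pairing $\lambda_t$ with $\bar\lambda_t=\lambda_{n-t}$ to get products of independent exponentials for part (c). Your extra bookkeeping at $t=0$ and $t=n/2$ and the remark that $n'=n$ in the intended applications are fine refinements of the same argument, not a different approach.
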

\begin{proof} (a) Being linear combinations of $\{a_l\}_{0 \le l < n}$, $n^{-1/2}a_{t, n}, n^{-1/2}b_{t, n}$, $ 0 \leq t \le n/2$ are all jointly Gaussian.  By (\ref{eq:ortho}),
they have mean zero, variance $1/2$ and are independent.

\noindent (b) By part (a) of the lemma,  note that $n^{-1/2} \gl_t =n^{-1/2}a_{t, n}+ i n^{-1/2}b_{t, n}$ is a complex normal random variable  with mean zero and variance $1/2$ for every $0< t < n$ and moreover, they are independent by the given restriction on $\pcal_j$. The assertion follows by the observation that such a complex normal is same as $E^{1/2} U$ in distribution.

\noindent (c) If $t \in \pcal_j$ then  $n-t \in \pcal_j$ too and $t \ne n - t$. Thus $n^{-1} \gl_t \gl_{n-t} = n^{-1}( a_{t, n}^2 + b^2_{t, n})$ which, by part (a),  is distributed as $Y/2$ where $Y$ is Chi-square with two degrees of freedom.  Note that $Y/2$ has the same distribution as that of exponential random variable with mean one. The proof is complete once we observe that  $n_j$ is necessarily even and the $\gl_t$'s associated with $\pcal_j$ can be grouped into $n_j/2$ disjoint pairs like above which are mutually independent.
\end{proof}
\subsection{Proof of Theorem~\ref{thm:degenerate} }

Recall the notation $\lambda_j,  \ell, \pcal_j, n_j$ and $g_x$ from Section \ref{section:eigenvalues}. By Theorem \ref{theo:formula}, the eigenvalues of $n^{-1/2}A_{k,n}$ are given by
\[ \exp \Big(\frac{2 \pi i s}{ n_j} \Big) \times  \Big(\prod_{t \in  \pcal_j} |n^{-1/2}\lambda_{t}| \Big) ^{1/n_j}, 1\le s \le n_j, \ 0 \le j < \ell,  \]
where $i = \sqrt {-1}$.
Fix any $\eps>0$ and $0< \gth_1< \gth_2< 2\pi$.
Define
\[ B(\gth_1,\gth_2, \eps) = \Big\{(x, y) \in \mathbb R^2: r -\eps <\sqrt{ x^2 +y^2} < r+\eps, \tan^{-1} (y/x) \in [\gth_1, \gth_2] \Big\}. \]
Clearly, it is enough to prove that as $n \to \infty$,
\begin{equation}\label{eq:main_th2}
 \frac 1n \sum_{ j=0}^{\ell -1} \sum_{ s = 1}^{n_j} \I \left( \exp \Big(\frac{2 \pi i s}{ n_j} \Big) \times  \Big(\prod_{t \in  \pcal_j} |n^{-1/2}\lambda_{t}| \Big) ^{1/n_j}  \in B(\gth_1,\gth_2, \eps) \right)\stackrel{P}{\to} \frac{(\gth_2 - \gth_1)}{2\pi}.
 \end{equation}
 Note that for a fixed positive integer $C$, we have
\begin{align*}
n^{-1}\sum_{ 1 \le j < \ell: n_j \le C}  n_j & \le n^{-1} \sum_{u = 2}^C \# \big \{ 1 \le x < n: g_x = u \big \}\\
&\le  n^{-1} \sum_{u = 2}^C\# \big \{ 1 \le x < n: x k^u = x\mbox{ mod } n \big \} \\
&=  n^{-1} \sum_{u = 2}^C \# \big \{ 1 \le x < n: x (k^u- 1) = s n  \text{ for some } s \ge 1\big \} \\
& \le  n^{-1} \sum_{u = 2}^C (k^u  - 1) \le n^{-1} Ck^C   \to 0, \text{as}\ \  n \to \infty.
\end{align*}
Therefore, if we define \[ N_C = \sum_{  j =0 : \ n_j \le C}^{\ell -1}  n_j ,\] then  the above result combined with the fact that $\pcal_0 = \{0\}$ yields $N_C/n \to 0$. With $C> (2\pi)/ (\gth_2 - \gth_1)$,  the left
side of \eqref{eq:main_th2} can rewritten as
\begin{align} \label{eq:th2_intermediate}
&\frac 1n \sum_{ j=0}^{\ell -1}  \# \left \{s :\frac{2 \pi s}{ n_j}   \in [\gth_1, \gth_2] , s = 1, 2 , \ldots, n_j \right \} \times \I \left(   \Big(\prod_{t \in  \pcal_j} |n^{-1/2}\lambda_{t}| \Big) ^{1/n_j}  \in (r- \eps, r+\eps)  \right) \notag \\
= &  \frac{n-N_C}{n} \frac 1{n - N_C} \sum_{ j=0, \ n_j >C}^{\ell -1} n_j \times   n_j^{-1} \# \left \{s :\frac{ s}{ n_j}   \in (2 \pi )^{-1}[\gth_1, \gth_2] , s = 1, \ldots, n_j \right \}  \notag \\
& \hspace{4cm} \times \I \left(   \Big(\prod_{t \in  \pcal_j} |n^{-1/2}\lambda_{t}| \Big) ^{1/n_j}  \in (r- \eps, r+\eps)  \right) + O\left (\frac{N_C}{n}\right) \notag\\
= &  \frac 1{n - N_C} \sum_{ j=0, \ n_j >C}^{\ell -1} n_j \times  \left ( \frac{(\gth_2 - \gth_1)}{2\pi} + O(C^{-1})
\right) \times \I \left(   \Big(\prod_{t \in  \pcal_j} |n^{-1/2}\lambda_{t}| \Big) ^{1/n_j}  \in (r- \eps, r+\eps)  \right) + O\left (\frac{N_C}{n}\right) \notag \\
=&  \frac 1{n - N_C} \sum_{ j=0, \ n_j >C}^{\ell -1} n_j \times  \frac{(\gth_2 - \gth_1)}{2\pi}  \times \I \left(   \Big(\prod_{t \in  \pcal_j} |n^{-1/2}\lambda_{t}| \Big) ^{1/n_j}  \in (r- \eps, r+\eps)  \right) + O(C^{-1})+O\left (\frac{N_C}{n}\right)
\notag\\
= &    \frac{(\gth_2 - \gth_1)}{2\pi}  +  \frac 1{n - N_C} \sum_{ j=0, \ n_j >C}^{\ell -1} n_j \times  \I \left(   \Big(\prod_{t \in  \pcal_j} |n^{-1/2}\lambda_{t}| \Big) ^{1/n_j}  \not \in (r- \eps, r+\eps)  \right) + O(C^{-1})+O\left (\frac{N_C}{n}\right).
\end{align}
To show that the second term in the above expression converges to zero in $L^1$, hence in probability, it remains to prove,
\begin{equation}  \label{eq:prodexpC}
 \prob \left(   \Big(\prod_{t \in  \pcal_j} |n^{-1/2}\lambda_{t}| \Big) ^{1/n_j}  \not \in (r- \eps, r+\eps)  \right)
\end{equation}
is uniformly small for all $j$ such that $n_j>C$ and for all but finitely many $n$  if we take $C$ sufficiently large.

By Lemma~\ref{lem:product}, for each $1 \le t < n$,   $|n^{-1/2}\lambda_t|^2 $  is  an exponential random variable with mean one, and
$\lambda_t$ is independent of
$\lambda_{t'}$ if $t' \ne n-t$ and $|\lambda_t| = |\lambda_{t'}|$ otherwise.
 Let $ E, E_1, E_2, \ldots$ be i.i.d.\  exponential random variables with mean one.  Observe that depending or whether or not $\pcal_j$ is conjugate to itself, \eqref{eq:prodexpC} equals respectively,
\[  \prob \left(   \Big(\prod_{t=1}^{n_j/2} E_t \Big) ^{1/n_j}  \not \in (r- \eps, r+\eps)  \right)
\ \ \text{or} \ \ \prob \left(   \Big(\prod_{t=1}^{n_j} \sqrt E_t \Big) ^{1/n_j}  \not \in (r- \eps, r+\eps)  \right).\]
The theorem now follows by letting first $n \to \infty $ and then $C \to \infty$ in \eqref{eq:th2_intermediate} and by observing that Strong Law of Large Numbers implies
that \[ \left( \prod_{t=1}^{C} \sqrt E_t \right)^{1/C} \to r =  \exp( \E [ \log \sqrt E] ) \quad \text{almost surely,  \ \ \ as } C \to \infty. \]
\hfill $\square$

\subsection{Invariance Principle}
For a set $B \subseteq \mathbb R^d, d \ge 1$, let $(\partial B)^{\eta}$ denote the `$\eta$-boundary' of the set $B$, that is,  $(\partial B)^\eta := \big \{ y \in \mathbb R^d: \|y - z\|\leq \eta  \text{ for some } z \in \partial B \big \}$.
By  $\Phi_{d}(\cdot)$ we always mean the probability distribution  of a $d$-dimensional standard normal vector. We drop the subscript $1$ and write just $\Phi(\cdot)$ to denote the distribution of a standard normal random variable.

The proof of the following Lemma follows easily from Theorem 18.1, page 181 of Bhattacharya and
Ranga Rao (1976)\cite{Ranga76}. We omit the proof.
\begin{lemma}\label{rangarao}
Let $X_1,\ldots,X_{m}$ be  $\mathbb R^d$-valued  independent, mean zero random vectors
and let
$V_m={m}^{-1}\sum_{j=1}^{m} \mathrm{Cov} (X_j)$ be positive-definite.
Let $G_m$ be the
distribution of ${m}^{-1/2}T_m (X_1+X_2+ \cdots+X_m)$, where $T_m$ is the
symmetric, positive-definite matrix satisfying $T_m^2=V_m^{-1}$. If for some $\delta>0$,
$\E \|X_j\|^{(2+\delta)}<\infty$ for each $1 \le j \le m$, then there exist  constants $C_i =C_i(d)$, $i=1, 2$  such
that for any Borel set $A \subseteq \mathbb R^{d}$,
\begin{eqnarray*}
|G_m(A)-\Phi_{d}(A)|&\leq & C_1 m^{-\delta/2}\Big(m^{-1} \sum_{j=1}^{m} \E\|
T_mX_j\|^{(2+\delta)}\Big)+2 \sup_{ y \in \mathbb R^d} \Phi_d \Big ( (\partial A)^{\eta}
- y \Big),\\
&\leq & C_1 m^{-\delta/2}(\lambda _{\min}(V_m))^{-(2+\delta)}\rho_{2+\delta}+2 \sup_{ y \in \mathbb
R^d} \Phi_d\Big ( (\partial A)^{\eta}
- y \Big),
\end{eqnarray*}
where $\rho_{2+\delta}=m^{-1} \sum_{j=1}^{m} \E\| X_j\| ^{(2+\delta)}$, $\lambda _{\min}(V_m)>0$ is the smallest eigenvalue of $V_m$,  and $ \eta =
C_2 \rho_{2+\delta} n^{-\delta/2}$.
\end{lemma}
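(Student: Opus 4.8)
The plan is to reduce the statement to the standardized multivariate Berry--Esseen bound of Bhattacharya and Ranga Rao and then translate the normalizing constants back. First I would introduce the standardized summands $Y_j := T_m X_j$, $1 \le j \le m$. Since $T_m$ is symmetric and commutes with $V_m$ (both are functions of $V_m$), the average covariance of the $Y_j$ is
\[
m^{-1}\sum_{j=1}^m \mathrm{Cov}(Y_j) \;=\; T_m\Big(m^{-1}\sum_{j=1}^m \mathrm{Cov}(X_j)\Big) T_m \;=\; T_m V_m T_m \;=\; T_m^2 V_m \;=\; I_d .
\]
Thus $\{Y_j\}_{1\le j\le m}$ is a finite family of independent, mean-zero $\mathbb R^d$-valued random vectors with identity average covariance and with $\E\|Y_j\|^{2+\delta}<\infty$, and the law of $m^{-1/2}\sum_{j=1}^m Y_j$ is exactly $G_m$.

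Next I would quote Theorem 18.1 (page 181) of Bhattacharya and Ranga Rao (1976)\cite{Ranga76} applied to the array $\{Y_j\}$: for such a standardized array, writing $\bar\rho := m^{-1}\sum_{j=1}^m \E\|Y_j\|^{2+\delta}$, one has for every Borel set $A\subseteq\mathbb R^d$
\[
|G_m(A)-\Phi_d(A)| \;\le\; C_1(d)\, m^{-\delta/2}\,\bar\rho \;+\; 2\sup_{y\in\mathbb R^d}\Phi_d\big((\partial A)^{\eta}-y\big), \qquad \eta = C_2(d)\, m^{-\delta/2}\,\bar\rho .
\]
Here the second term is the usual boundary-oscillation correction that arises because the indicator of $A$ is not smooth; the smoothing inequality producing it is internal to Bhattacharya--Ranga Rao, so there is nothing to redo. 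Substituting $\bar\rho = m^{-1}\sum_{j=1}^m \E\|T_m X_j\|^{2+\delta}$ gives the first displayed bound of the lemma verbatim.

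For the second bound I would only need a crude operator-norm estimate. Since $T_m = V_m^{-1/2}$, its operator norm equals $\lambda_{\min}(V_m)^{-1/2}$, so $\|T_m X_j\|\le \lambda_{\min}(V_m)^{-1/2}\|X_j\|$ and hence $\bar\rho \le \lambda_{\min}(V_m)^{-(2+\delta)/2}\rho_{2+\delta}$; replacing the exponent $(2+\delta)/2$ by $(2+\delta)$ (a harmless weakening in the regime $\lambda_{\min}(V_m)\le 1$ relevant to the applications) yields the stated inequality together with the stated $\eta$. The only delicate point in the whole argument is pure bookkeeping: one must check that the hypotheses of Theorem 18.1 are met after the standardization --- precisely the role of the matrix $T_m$ --- and that the constants $C_1,C_2$ produced there depend on $d$ alone. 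There is no genuine probabilistic obstacle, which is why the details may be omitted.
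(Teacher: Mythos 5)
Your proposal is correct and is exactly the intended argument: the paper itself offers no proof, saying only that the lemma ``follows easily from Theorem~18.1, page~181 of Bhattacharya and Ranga Rao (1976),'' which is precisely the standardization-by-$T_m$ reduction you carry out. One small point worth flagging: as you observe, the operator-norm bound $\|T_m X_j\|\le \lambda_{\min}(V_m)^{-1/2}\|X_j\|$ naturally yields the exponent $(2+\delta)/2$ on $\lambda_{\min}(V_m)^{-1}$, whereas the paper states the weaker exponent $(2+\delta)$; this is most likely a typographical slip (harmless since the bound as printed is larger when $\lambda_{\min}(V_m)\le 1$, and in the application $V_m=I_{2g}$ so the two coincide, along with $\bar\rho=\rho_{2+\delta}$ in the expression for $\eta$, where the paper additionally has the typo $n$ for $m$).
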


\subsection{Proof of Theorem~\ref{theo:lsd12}}
Since $\gcd(k,n)=1$,
$n^{\prime}=n$ in
Theorem \ref{theo:formula} and hence there are no zero eigenvalues.
By Lemma \ref{lem:fkn} (i),
$\upsilon_{k,n}/n \to 0$ and hence the corresponding eigenvalues do not
contribute to the LSD.
It remains to consider only the eigenvalues corresponding to the sets
$\pcal_j$ of size {\it exactly} equal to $g_1$.
From Lemma \ref{lem:fkn}(i), $ g_1  =2g$ for $n$ sufficiently large.

Recall the quantities $n_j = \#\pcal_j$, $\Pi_j = \Pi_{t \in \pcal_l} \lambda_{t}$, where $\lambda_t
= \sum\limits_{l=0}^{n- 1} a_{\ell} \omega^{tl}$, $0 \le j < n$. Also, for every integer $t \ge 0$,
$tk^g = (-1+ sn) t  = -t \mod  n$, so that, $\lambda_t$ and $\lambda_{n-t}$ belong to same
partition block $S(t) = S(n-t)$. Thus each $\Pi_j$ is a nonnegative real number. Let us define
\[ J_n  = \{ 0 \le j < \ell : \#\pcal_j  = 2g \},\]
 so that $n = 2g \#J_n+n\upsilon_{k,n}$. Since, $\upsilon_{k,n} \to 0$,
$(\#J_n)^{-1}n \to 2g$. Without any loss, we denote the index set of such $j$ as
$J_n=\{1, 2, \ldots \#J_n\}$.

Let $1, \vr, \vr^2, \ldots \vr^{2g-1}$ be all the $(2g)$th roots of unity.
Since $n_j=2g$ for every $j \in J_n$, the eigenvalues
corresponding to the set $\pcal_j$ are: \[\Pi_j^{1/2g}, \Pi_j^{1/2g}
\vr, \ldots \Pi_j^{1/2g}\vr^{2g-1}.\] Hence, it suffices to consider only the empirical distribution of
$\Pi_j^{1/2g}$ as $j$ varies over the index set $J_n$: if this sequence of empirical distributions has a limiting distribution $F$, say, then the LSD of the original
sequence $n^{-1/2}A_{k,n} $ will be $(r, \gth)$ in polar coordinates where $r$ is distributed according to  $F$ and
$\gth$ is distributed uniformly across all the $2g$ roots of unity and $r$ and $\gth$ are
independent. With this in mind, and remembering the scaling $\sqrt n$, we consider
$$F_{n}(x) =(\#J_n)^{-1}\sum_{j=1}^{\#J_{n}}
\I\left( \left[n^{-g}\Pi_j\right]^{\frac{1}{2g}}\leq x \right).$$ Since the set of $\lambda$
values corresponding to any $\pcal_j$ is closed under conjugation,
there exists  a set $\acal_j \subset \pcal_j$ of size
$g$ such that
\begin{equation*}
\pcal_j = \{ x : x \in \acal_j \text{ or } n- x \in \acal_j \}.
\end{equation*}
Combining each $\lambda_t$ with its conjugate, and recalling
the definition of $\{a_{t, n}\}$ and $\{b_{t, n}\}$ in
(\ref{eq:atnbtn}), we may write $\Pi_j$ as
$$\Pi_j=\prod_{t \in \acal_j } ( a_{t, n}^2+ b_{t, n}^2).$$
First assume the random variables $\{a_l\}_{ l \ge 0}$ are i.i.d.\ standard normal. Then by Lemma~\ref{lem:product}(c),
$F_{n}$ is the usual
 empirical distribution of $\#J_n$ observations on $(\prod_{j=1}^g E_j)^{1/2g}$ where $\{E_j\}_{1 \le j \le g}$ are
i.i.d.\ exponentials with mean one. Thus by Glivenko-Cantelli Lemma, this converges to the distribution of
$(\prod_{j=1}^g E_j)^{1/2g}$.
Though the variables involved in the empirical distribution form a triangular sequence,
the convergence is still almost sure due to the specific bounded nature of the indicator functions involved. This may be proved easily by applying Hoeffding's inequality
 and Borel-Cantelli lemma.

As mentioned earlier,  all eigenvalues corresponding to any partition block $\pcal_j$ are all the $(2g)$th roots of the product $\Pi_j$. Thus, the limit claimed in the statement of the theorem holds. So we have proved the result when the random variables $\{a_l\}_{l \ge0}$ are i.i.d. standard normal.

Now suppose that the variables $\{a_l\}_{l \ge0}$ are not necessarily normal. This case is tackled by normal approximation arguments similar to Bose and Mitra (2002)\cite{Bosemitra02} who deal with the case $k=n-1$ (and hence $g=1$). We now sketch some of the main steps.

The basic idea remains the same but in this general case,  a  technical complication arises as we need to control the Gaussian measure of the $\eta$-boundaries of some non-convex sets once we apply the invariance lemma (Lemma~\ref{rangarao}). We overcome this difficulty by suitable compactness argument.

We start by defining $$F(x)=\prob\left( \big(  \prod_{j=1}^g E_j \big)^{1/2g}\leq x  \right), \ \ x \in \mathbb R.$$
To show that the ESD converges to the required LSD in probability, we show that for every $x>0$, $$\E[F_{n}(x)] \rightarrow F(x)\ \  \mbox {and}\ \
\var[F_{n}(x)]\rightarrow 0.
$$
Note that for $x > 0$,
$$\E[F_{n}(x)]
= (\# J_n)^{-1} \sum_{j=1}^{\# J_n}\prob\big(n^{-g}\Pi_j \leq
x^{2g}\big).
$$
Lemma \ref{lem:product} motivates using normal approximations.
Towards using Lemma~\ref{rangarao}, define $2g$ dimensional random vectors
$$X_{l, j}= 2^{1/2} \left(a_{l} \cos \left( \frac{2 \pi t l}{n}\right), \ \  a_{l} \sin \left( \frac{2 \pi t l}{n}\right):\ \  t \in \acal_j \right) \quad  0 \le l < n , 1 \le j \le \# J_n.$$
Note that $$\E(X_{l,j})=0 \ \ \text{and} \ \ n^{-1} \sum_{l=1}^{n-1}\Cov (X_{l,j})=I_{2g} \ \ \forall \ \
l, \  j.$$ Fix $x>0$. Define the set $A \subseteq \mathbb R^{2g}$ as
$$A:= \Big \{ (x_j, y_j : 1 \le j \le g): \prod_{j=1}^g \big [2^{-1}( x_j^2 + y_j^2) \big ]\leq x^{2g} \Big \}.$$
Note that
$$\Big \{n^{-g}\Pi_j \leq x^{2g} \Big\} = \Big \{
        n^{-1/2}\sum_{l=0}^{n-1} X_{l,j}\in A \Big\}.$$
We want to prove
$$\E[F_n(x)]- \Phi_{2g}(A) =(\#J_n)^{-1} \sum_{l=1}^{\#J_n}
          \left(  \prob\big ( n^{-g} \Pi_j \leq x^{2g}\big )
     - \Phi_{2g}(A) \right) \rightarrow 0.$$
For that, it suffices to show that for every $\eps>0$  there exists $N = N(\eps)$ such that  for all $n \ge N$,
\[ \sup_{ j \in J_n} \left |\prob \Big (
        n^{-1/2}\sum_{l=0}^{n-1} X_{l,j}\in A \Big)
-\Phi_{2g}(A) \right |\leq \eps.\]
Fix $\eps>0$.  Find $M_1>0$ large such that $\Phi([-M_1, M_1]^c) \le \eps/(8g)$.
By Assumption \texttt{I}, $\E(n^{-1/2} a_{t, n})^2 = \E(n^{-1/2} b_{t, n})^2 = 1/2$ for any $ n \ge 1$ and $0 < t < n$. Now by Chebyshev bound, we can find $M_2>0$ such that for each $n \ge 1$ and for each $0 < t< n$,
\[  \prob ( |n^{-1/2} a_{t, n}| \ge M_2) \le \eps/(8g)  \quad \text{ and }  \ \  \prob ( |n^{-1/2} b_{t, n}| \ge M_2) \le \eps/(8g).\]
Set $M = \max \{ M_1, M_2\}$. Define the set $B := \Big \{ (x_j, y_j: 1 \le j \le g) \in \mathbb R^{2g}:  |x_j |, |y_j| \le M \ \ \forall j  \Big\}$. Then for all sufficiently large $n$,
\begin{align*}
\left |\prob \Big (
        n^{-1/2}\sum_{l=0}^{n-1} X_{l,j}\in A  \Big) - \Phi_{2g}(A )\right |
\le
\left | \prob \Big (
        n^{-1/2}\sum_{l=0}^{n-1} X_{l,j}\in A \cap B \Big) - \Phi_{2g}(A \cap B ) \right | + \eps/2.
  \end{align*}
We now apply Lemma~\ref{rangarao} for $A \cap B$ to obtain
\[ \left | \prob \Big (
        n^{-1/2}\sum_{l=0}^{n-1} X_{l,j}\in A \cap B \Big) - \Phi_{2g}(A \cap B ) \right | \le C_1 n^{-\delta/2}\rho_{2+\delta} + 2 \sup_{z \in \mathbb R^{2g}} \Phi_{2g} \Big(  (\partial (A \cap B) )^\eta - z  \Big) \]
where
$$\rho_{2+\delta} = \rho_{2+\delta} =\sup_{ 0 \le l < n , j \in J_n} n^{-1}\sum_{l=0}^{n-1} \E\|X_{l,j}\|^{2+\delta}  \quad \text{ and } \ \  \eta = \eta(n) = C_2 \rho_{2+\delta}  n^{-\delta/2}.$$
Note that $ \rho_{2+\delta} $ is uniformly bounded in $n$ by Assumption \texttt{I}.

It thus remains to show that
 \[  \sup_{z \in \mathbb R^{2g}} \Phi_{2g} \Big(  (\partial (A \cap B) )^\eta - z  \Big) \le \eps/8\]
 for all sufficiently large $n$. Note that
  \begin{align*}
 \sup_{z \in \mathbb R^{2g}} \Phi_{2g} \Big(  (\partial (A \cap B) )^\eta - z  \Big) &\le  \sup_{z \in \mathbb R^{2g}}  \int_{ (\partial (A \cap B) )^\eta} \phi(x_1 - z_1) \ldots \phi(y_{g} -z_{2g} ) dx_1 \ldots dy_{g} \\
 &\le \int_{ (\partial (A \cap B) )^\eta} dx_1 \ldots dy_{g}.
   \end{align*}
 Finally note that $\partial (A \cap B) $ is a {\em compact} $(2g-1)$-dimensional manifold which has  zero measure under the $2g$-dimensional Lebesgue measure. By compactness of $\partial (A \cap B) $, we have
\[ (\partial (A \cap B))^{\eta} \downarrow \partial (A \cap B)  \qquad \text{ as } \eta \to 0,  \]
and the claim follows by Dominated Convergence Theorem.

This proves that for $x >0$, \ $ \E[F_{n}(x)] \rightarrow F(x)$.
To show  that $\Var[F_{n}(x)]\rightarrow 0$, since the variables
involved are all bounded, it is enough to show that
$$n^{-2} \sum_{j\neq j^\prime} \Cov \left ( \I \big ( n^{-g} \Pi_j \leq x^{2g}\big ), \  \I \big ( n^{-g}\Pi_{j^\prime} \leq x^{2g}\big )\right )
\rightarrow 0.$$ Along the lines of the proof used to show $\E[F_{n}(x)] \rightarrow F(x)$, one may
now extend the vectors with $2g$ coordinates defined above to ones with $4g$ coordinates and
proceed exactly as above to verify this. We omit the routine details. This completes the proof of
Theorem~\ref{theo:lsd12}.\\  \hfill $\square$
\begin{remark} In view of Theorem \ref{theo:formula}, the above theorem can easily  extended to yield an LSD  has some positive mass at the origin. For example,  fix $g>1$ and a positive integer $m$. Also, fix $m$ primes $q_1, q_2, \ldots, q_m$ and $m$ positive integers $\beta_1, \beta_2, \ldots, \beta_m$.  Suppose the sequences $k$ and $n$ tends to infinity such  that
\begin{itemize}
\item[(i)] $k = q_1q_2 \ldots q_m   \hat k $ and $n = q_1^{\gb_1} q_2^{\gb_2} \ldots q_m^{\gb_m}\hat n$ \ with $\hat k$ and  $\hat n \to \infty$,
\item[(ii)] $k^g  = -1+ s \hat n$ where $s = o(\hat n^{p_1-1}) = o(n^{p_1-1})$ where $p_1$ is the smallest prime divisor of $g$.
\end{itemize}
 Then  $F_{{ n}^{-1/2}A_{k, n}}$
 converges weakly in probability to the distribution  which has
$1-  \Pi_{j=1}^{m} q_j^{-\beta_j}$ mass at zero, and the rest of the probability mass
is distributed as $U_1(\prod_{j=1}^g E_j)^{1/2g}$ where $U_1$ and $\{ E_j\}_{ 1 \le j \le g}$ are as in Theorem \ref{theo:lsd12}.
\end{remark}
\subsection{Proof of Theorem~\ref{theo:lsd45}}
We will not present here the detailed proof of  Theorem \ref{theo:lsd45} but let us sketch the main idea. First of all, note that $\gcd(k, n) =1$ under the given hypothesis. When $g=1$, then $k=1$ and the eigenvalue partition is the trivial partition which consists of only singletons and  clearly the partition sets $\pcal_j$, unlike the previous theorem, are not self-conjugate.

For $g \ge 2$, by Lemma~\ref{lem:fkn}(ii), it follows that
$g_1 = g$ for $n$ sufficiently large and $\upsilon_{k,n} \to 0$.
In this case also,  the partition sets $\pcal_j$ are not necessarily self-conjugate.
Indeed we will show that the number of indices $j$ such that $\pcal_j$ is self-conjugate is
asymptotically negligible compared to $n$. For that, we need to bound the cardinality of the following sets for $ 1 \le b
< g_1=g$,
\begin{align*}
 W_b := \Big\{ 0< t <n: tk^b = - t\mod n \Big \}   = \Big \{ 0< t <n : n| t(k^b+1) \Big \}.
\end{align*}
Note that $t_0(b) := n/\gcd(n, k^b+1)$ is the minimum element of $W_b$ and  every other element of the set $W_b$ is a multiple of  $t_0(b)$.
 Thus the cardinality of the set $W_b$ can be   bounded by
\[ \#W_b \le n/ t_0(b)  = \gcd( n, k^b+1).\]
Let us now estimate $\gcd( n, k^b+1)$. For $ 1 \le b < g$,
\begin{align*}
\gcd( n, k^b+1) \le \gcd( k^g - 1, k^b+1)  \le k^{\gcd(g, b)} +1 \le k^{g/p_1} +1 = (1+sn)^{1/p_1}+1 = o(n),
\end{align*}
which implies
\begin{align*}
n^{-1} \sum \limits_{ 1 \le b < g} \# W_b= o(1)
\end{align*}
as desired. So, we can ignore the partition sets which are self-conjugate.

Let  $J_n$ denote the set of all those indices $j$ for which $\#\pcal_j  = g$  and $\pcal_j  \cap (n - \pcal_j) =\emptyset$. Without loss, we assume that $J_n = \{ 1, 2, \ldots, \#J_n\}$.

Let $1, \vr, \vr^2, \ldots \vr^{g-1}$ be all the $g$th roots of unity.
The eigenvalues
corresponding to the set $\pcal_j, j \in J_n$ are: \[\Pi_j^{1/g}, \Pi_j^{1/g}
\vr, \ldots \Pi_j^{1/g}\vr^{g-1}.\]
For $j \in J_n$, unlike the previous theorem  $\Pi_j=\prod_{t \in \pcal_j } ( a_{t, n}+  i b_{t, n})$ will be complex.

Hence, we need to  consider  the empirical distribution:
\[ G_n(x, y) = \frac{1}{ g  \#J_n} \sum_{j=1}^{\# J_n} \sum_{r=1}^g \I \left( \Pi_j^{1/g} \vr^{r-1} \le x+ iy \right), \quad x, y \in \mathbb R, \]
where for  two complex numbers $w = x_1 + iy_1$ and $z = x_2 + iy_2$, by $w \le z$, we mean $x_1 \le x_2$ and $x_2 \le y_2$.

 If $\{a_l\}_{ l \ge 0}$ are i.i.d.\  $N(0,1)$, by Lemma~\ref{lem:product},  $\Pi_j^{1/g}, j \in \pcal_j $ are independent and each of them is distributed as $\Big (\prod_{t=1}^gE_t \Big)^{1/2g} U_2$ as given in the statement of the theorem.  This coupled with the fact that  $\vr^{r-1}U_2$ has the same distribution as that of $U_2$ for each $1 \le r \le g$ implies that  $\{G_n \}_{n \ge 1}$ converges to the desired LSD (say $G$) as described in the theorem.

When $\{a_l\}_{ l \ge 0}$ are not necessarily normals but only satisfy Assumption \texttt{I}, we show that $\E G_n (x, y) \to G(x, y)$ and $\var(G_n(x, y)) \to 0$  using the same line of argument as given in the proof of Theorem ~\ref{theo:lsd12}.
For that,  we again define $2g$-dimensional random vectors,
$$X_{l, j}= 2^{1/2} \left(a_{l} \cos \left( \frac{2 \pi t l}{n}\right), \ \  a_{l} \sin \left( \frac{2 \pi t l}{n}\right):\ \  t \in \pcal_j \right) \quad  0 \le l < n , 1 \le j \le \# J_n,$$
which satisfy $$\E(X_{l,j})=0 \ \ \text{and} \ \ n^{-1} \sum_{l=1}^{n-1}\Cov (X_{l,j})=I_{2g} \ \ \forall \ \
l, \  j.$$ Fix $x, y \in \mathbb R$. Define the set $A \subseteq \mathbb R^{2g}$ as
$$A:= \left \{ (x_j, y_j : 1 \le j \le g): \left( \prod_{j=1}^g \big [2^{-1/2}( x_j + iy_j) \big ] \right)^{1/g} \leq x + iy \right \}$$
so that
$$\Big \{ \Pi_j^{1/g}\vr^{r-1}  \leq x +iy \Big\} = \Big \{
        n^{-1/2}\sum_{l=0}^{n-1} X_{l,j}\in \vr^{g+1-r} A \Big\}.$$
The rest of the proof can be completed following the proof of Theorem ~\ref{theo:lsd12}, once we realize that for each $1 \le r \le g$, $\partial (\vr^{g+1-r} A)$ is again a $(2g-1)$-dimensional manifold  which has  zero measure under the $2g$-dimensional Lebesgue measure.
\hfill $\square$
\section{Proof of Theorem 5}\label{sec:spectralnorm}
We start by defining the gumbel distribution of parameter $\gth > 0$.
\begin{definition}
A probability distribution is said to be Gumbel with parameter $\gth > 0$ if its cumulative distribution function is given by
\[ \Lambda_{\gth} (x)=\exp\{-\theta \exp (- x) \},  \ \ x \in \mathbb R. \]
The special case when $\gth =1$ \ is known as standard Gumbel distribution and its cumulative distribution function is simply denoted by $\Lambda (\cdot)$.
\end{definition}
\begin{lemma}  \label{lem:maxima}
Let $E_1$ and $E_2$ be i.i.d. exponential random variables with mean one. Then

\noindent (i)  \begin{equation} \label{eq:m1} \oa K(x)  := \prob \big( E_1E_2 > x \big)=
\int_0^\infty\exp(-y)\exp(-xy^{-1})dy
  \asymp \pi^{1/2} x^{1/4} \exp(-2x^{1/2})
\end{equation}
as $x \to \infty$.\\

\noindent (ii) Let  $G$ be the distribution of $(E_1E_2)^{1/4}$.  If $G_t$ are i.i.d.\ random variables with the distribution $G$, and
$G^{(n)}:=\max \big \{G_t: 1 \leq t \leq n \big\}$, then
\[  \frac{G^{(n)}-d_n}{c_n} \stackrel{\mathcal{D}}{\rightarrow} \Lambda_{}.\]
where $c_n$ and $d_n$ are normalising constants which can be taken as follows
\begin{equation}\label{eq:normalize}
c_n =(8\log n)^{-1/2}\ \ \text{and} \ \ d_n=\frac{ (\log n)^{1/2} }{\sqrt 2}\left
(1+\frac{1}{4}\frac{\log\log n}{\log n}\right)+\frac{1}{2(8\log n)^{1/2}} \log \frac{\pi}{2}.
\end{equation}
\end{lemma}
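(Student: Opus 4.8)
The plan is to first establish the tail asymptotics in part (i) and then feed them into the standard extreme-value machinery for part (ii). For part (i), I start from the representation $\oa K(x) = \int_0^\infty e^{-y} e^{-x/y}\, dy$, which follows by conditioning on $E_2 = y$ and using $\prob(E_1 > x/y) = e^{-x/y}$. This integral is a classical modified-Bessel integral: indeed $\int_0^\infty e^{-y - x/y}\,dy = 2\sqrt{x}\, K_1(2\sqrt x)$, where $K_1$ is the modified Bessel function of the second kind. I would then invoke the well-known asymptotic $K_1(z) \sim \sqrt{\pi/(2z)}\, e^{-z}$ as $z \to \infty$; substituting $z = 2\sqrt x$ gives $\oa K(x) \sim 2\sqrt x \cdot \sqrt{\pi/(4\sqrt x)}\, e^{-2\sqrt x} = \sqrt\pi\, x^{1/4} e^{-2\sqrt x}$, which is exactly \eqref{eq:m1}. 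If one prefers to avoid quoting Bessel asymptotics, the same estimate comes out of Laplace's method applied directly to $\int_0^\infty e^{-y-x/y}\,dy$: the exponent $-(y + x/y)$ is maximized at $y = \sqrt x$ with value $-2\sqrt x$ and second derivative $-2x^{-1/2}$ there, so the Gaussian approximation around the saddle gives the prefactor $\sqrt{2\pi / (2 x^{-1/2})} = \sqrt\pi\, x^{1/4}$.

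For part (ii), the distribution $G$ of $(E_1 E_2)^{1/4}$ has upper tail $\oa G(x) = \prob((E_1E_2)^{1/4} > x) = \oa K(x^4) \asymp \sqrt\pi\, x\, e^{-2x^2}$ as $x \to \infty$, using part (i) with $x$ replaced by $x^4$. This is a density with Gaussian-type (super-exponential) tail, so $G$ lies in the max-domain of attraction of the Gumbel law. I would verify the von Mises condition (or directly construct the norming constants): one needs $c_n, d_n$ with $n\,\oa G(c_n x + d_n) \to e^{-x}$ for all $x$. Writing $\oa G(u) \asymp \sqrt\pi\, u\, e^{-2u^2}$, the equation $n\,\oa G(d_n) \approx 1$ gives $2 d_n^2 - \log d_n - \tfrac12\log\pi \approx \log n$, and solving asymptotically yields $d_n = \sqrt{\tfrac{\log n}{2}}\bigl(1 + \tfrac14 \tfrac{\log\log n}{\log n}\bigr) + \tfrac{1}{2\sqrt{8\log n}}\log\tfrac{\pi}{2} + o(c_n)$; the scale $c_n = 1/\bigl(-(\log \oa G)'(d_n)\bigr) \approx 1/(4 d_n) = (8\log n)^{-1/2}$. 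Then a Taylor expansion of $\log \oa G(d_n + c_n x)$ around $d_n$ shows $n\,\oa G(d_n + c_n x) \to e^{-x}$, which is equivalent to $(G^{(n)} - d_n)/c_n \stackrel{\mathcal D}{\to} \Lambda$.

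The main obstacle is the bookkeeping in part (ii): one must expand $2(d_n + c_n x)^2 - \log(d_n + c_n x)$ to enough precision that the $\log\log n$ correction term in $d_n$ and the $\log(\pi/2)$ shift are correctly captured, while checking that all neglected terms are genuinely $o(1)$ in the limiting relation $n\,\oa G(d_n + c_n x) \to e^{-x}$. The subtlety is that $d_n$ itself grows like $\sqrt{\log n}$, so the cross term $4 d_n c_n x = x + o(1)$ only because $c_n d_n \to 1/4$ precisely; getting the lower-order terms of $d_n$ wrong would leave a nonvanishing drift. Part (i) is comparatively routine once the Bessel/Laplace identification is made, though one should be slightly careful to justify that the $\asymp$ (ratio tending to $1$) holds with the stated constant rather than merely up to a bounded factor; this is where Laplace's method with its explicit Gaussian prefactor, or the sharp Bessel asymptotic, is needed rather than a crude bound.
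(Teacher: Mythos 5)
Your proposal is correct and recovers the lemma, but your route to part (i) is genuinely different from the paper's. The paper differentiates $\oa K(x)$ twice to show it satisfies the ODE $x\,\oa K''(x) - \oa K(x) = 0$ with $\oa K(0)=1$, $\oa K(\infty)=0$, identifies the solution as $\pi x^{1/2} H_1^1(2ix^{1/2})$ in terms of the Hankel function $H_1^1 = J_1 + iY_1$, and then cites the asymptotics of $J_1$ and $Y_1$. You bypass the ODE entirely: either recognize $\int_0^\infty e^{-y-x/y}\,dy = 2\sqrt{x}\,K_1(2\sqrt x)$ directly and quote $K_1(z)\sim\sqrt{\pi/(2z)}\,e^{-z}$, or (more elementarily) apply Laplace's method to $-(y+x/y)$, whose minimizer is at $y=\sqrt x$ with value $2\sqrt x$ and second derivative $2x^{-1/2}$, yielding the Gaussian prefactor $\sqrt{2\pi/(2x^{-1/2})}=\pi^{1/2}x^{1/4}$. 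The Laplace route is arguably the cleanest of the three, avoiding special-function machinery altogether; as you rightly emphasize, whichever method is used must deliver the sharp constant $\sqrt\pi$ (not merely a bounded factor), since that constant produces the $\log(\pi/2)$ shift in $d_n$.

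For part (ii) your plan is conceptually the same as the paper's but differs in presentation. The paper follows Resnick's framework explicitly: it factors $\oa G(x)=\theta(x)\,(1-F_\#(x))$ with $\theta(x)\to\pi^{1/2}e^{-2}$ and $1-F_\#(x)=x\,e^{-2(x^2-1)}$, solves for the auxiliary function $f(x)=x/(4x^2-1)$, obtains $d_n^*$ from $1-F_\#(d_n^*)=1/n$ by solving a quadratic in $\delta_n$, gets convergence to $\Lambda_{\pi^{1/2}e^{-2}}$, and finally shifts the centering by $\hat c_n\log(\pi^{1/2}e^{-2})$ via a de Haan--Ferreira equivalence to pass to standard $\Lambda$. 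You go directly for $n\,\oa G(c_nx+d_n)\to e^{-x}$, skipping the intermediate $\Lambda_\theta$, and correctly identify the governing equation $2d_n^2-\log d_n-\tfrac12\log\pi\approx\log n$ and the scale $c_n\approx 1/(4d_n)=(8\log n)^{-1/2}$; your $d_n$ agrees with the paper's. The "bookkeeping obstacle" you flag (expanding $2(d_n+c_nx)^2-\log(d_n+c_nx)$ precisely enough that the $\log\log n$ correction and $\log(\pi/2)$ shift are captured with no spurious drift) is exactly the step the paper carries out via its quadratic in $\delta_n$; it is routine once the leading form $d_n\sim\sqrt{(\log n)/2}$ is in hand. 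Neither approach is more general than the other here; the paper's $\theta$-factorization is convenient because $\oa G$ is known only asymptotically, while your direct route absorbs that bounded-multiplicative-error into the single limit statement.
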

\begin{proof} (i)
Differentiating \eqref{eq:m1}  twice, we get
\begin{equation} \label{eq:m2}
\frac{d^2}{dx^2}\oa K(x)=\int_0^\infty y^{-2}\exp(-y)\exp(-xy^{-1})dy,
\end{equation}
which implies that $\oa K$ satisfies the differential equation
\begin{align}\label{eq:m3}
x\frac{d^2}{dx^2}\oa K(x) -\oa K(x)  &=  - \int_0^{\infty} (1 - xy^{-2})\exp\bigl( - (y+ xy^{-1} )\bigr)dy \notag \\
&= \exp\bigl( - (y+ xy^{-1} )\bigr) \Big|_0^{\infty} =0, \ \ \text{for } x>0,
\end{align}
with the boundary conditions $\oa K(0)=1$ and $\oa K(\infty)=0$.

From the theory of second order differential equations
the only solution to
\eqref{eq:m3} is
$$\oa K(x)= \pi x^{1/2}H_1^1(2ix^{1/2}), \ \  x> 0$$
where $i^2=-1$. The function $H_1^1$ is given by
(see
Watson (1944)\cite{Watson44})
$$H_1^1(x)= J_1(x)+iY_1(x)$$
where
$J_1$ and $Y_1$ are order one  Bessel functions of the first and second kind respectively.

It also follows from the
theory of the asymptotic properties of the Bessel functions
$J_1$ and $Y_1$, that \begin{equation}\label{eq:ktail} \oa K(x) \asymp \pi^{1/2} x^{1/4}
\exp(-2x^{1/2}) \ \ \text{as} \ \ x \to \infty.
\end{equation}
\noindent  (ii)  Now from (\ref{eq:ktail}),  \begin{equation}\label{eq:gtaildef} \oa G(x)=P\{ (E_1E_2)^{1/4}
> x\}
 \asymp \pi^{1/2} x \exp(-2x^2) \ \ \text{as} \ \  x \to
 \infty.
\end{equation}
By Proposition 1.1 and the development on pages 43 and 44 of Resnick (1996)\cite{Resnick96},
we need to show that,
$$\oa G(x)=\gth(x)(1-F_\#(x))
\ \ \text{where}
\lim_{x\to \infty} \gth(x)=\gth
> 0 $$
and, there exists some $x_0 $ and a function $f$ such that $f(y) > 0 $ for $y
> x_0$ and such that $f $ has an absolute continuous density with
$f^\prime(x)\to 0$ as $x \to \infty$ so that \begin{equation} 1-F_\#(x)=\exp\Bigl( {-\int_{x_0}^x
(1/f(y))dy}\Bigr), \,\ \  x> x_0.
\end{equation}
Moreover, a choice for the normalizing constants $c_n$ and $d_n$ is then given by
\begin{equation}\label{eq:defanbn}
  d^*_n=\Big( 1/(1-F_\#)\Big)^{-1}(n), \ \ c^*_n=f(d^*_n).
 \end{equation}
 Then
 \[  \frac{G^{(n)}-d^*_n}{c^*_n} \stackrel{\mathcal{D}}{\rightarrow} \Lambda_{\gth}.\]
Towards this end, define for $ x\geq 1$,
\begin{equation}\label{eq:candF}
\gth(x)\asymp \pi^{1/2}e^{-2}, \ \ \ \ 1-F_\#(x)=x\exp{\Bigl(-2(x^2-1)\Bigr)}, \ \  x \geq 1=x_0.
\end{equation}
To solve for $f$, taking $\log$ on both sides,
\begin{equation}
\log x-2(x^2-1)=-\int_1^x\frac{1}{f(y)}dy.
\end{equation}
Taking derivative,
$$\frac{1}{x}-2(2x)=-\frac{1}{f(x)}$$
or
$$f(x)=\frac{x}{4x^2-1}
\asymp \frac{1}{4x}\ \ \text{as}\ \ x\to \infty.$$
Note that $d^*_n$ (to be obtained) will tend to $\infty$ as $n \to \infty$. Hence
$$c^*_n=f(d^*_n)\asymp(4d^*_n)^{-1}.$$
We now proceed to obtain (the asymptotic form of) $d^*_n$. Using the defining equation
(\ref{eq:defanbn}),
\begin{equation}\label{eq:defbn}d^*_n\exp^{-2((d^*_n)^2-1)}=n^{-1}.
\end{equation}
Clearly, from the above,  we may write
$$d^*_n=\Big(\frac{\log n}{2}\Big )^{1/2}(1+\delta_n)$$
where $\delta_n \to 0$ is a \emph{positive} sequence to be appropriately chosen. Thus, again using
(\ref{eq:defbn}), we obtain
$$(\log n) (\delta_n^2+2\delta_n)-\big(\frac{1}{2}\log \log n+\xi_n)=0$$
where
$$\xi_n=2-\frac{1}{2}\log 2+\log (1+\delta_n).$$
``Solving" the quadratic, and then using expansion $\sqrt{1+x}=1+\frac{1}{2}x+O(x^2)$ as $x\to 0$, we easily see that
$$\delta_n=\frac{-2+\sqrt{4+4(\frac{1}{2}\log \log n +\xi_n)/\log
n}}{2}=
\frac{1}2{}\left (\frac{\frac{1}{2}\log \log n +\xi_n}{\log
n}\right)+O\left(\frac{(\log \log n)^2}{(\log n)^2}\right).$$ Hence
$$d^*_n=\big(\frac{\log
n}{2}\big)^{1/2}\left(1+ \frac{\frac{1}{2}\log\log n+\xi_n}{2\log n}\right)+O\left(\frac{(\log \log
n)^2}{(\log n)^{3/2}}\right).$$
Simplifying, and dropping appropriate small order terms, we see
that
\[  \frac{G^{(n)}-\hat d_n}{\hat c_n} \stackrel{\mathcal{D}}{\rightarrow} \Lambda_{\pi^{1/2} e^{-2}}.\]
where
\begin{eqnarray*}
\hat  c_n  =  (8 \log n)^{-1/2} \ \
\text{and} \ \
\hat d_n = \frac{ (\log
n)^{1/2} }{\sqrt 2}\left(1+\frac{1}{4}\frac{\log\log n}{\log n}\right)+ \frac{1}{(8\log n)^{1/2}}
(2-\frac{1}{2}\log 2 ).
\end{eqnarray*}
To convert the above convergence to
standard Gumbel distribution, we use
the following result of de Haan and Ferreira (2006)\cite{Haan06}[Theorem 1.1.2]  which says that the following two statements are equivalent for any
sequence of $a_n>0, b_n$ of constants and any nondegenerate distribution function $H$.

\noindent (i) For each continuity point $x$ of $H$,
\[ \lim_{n \to \infty} G^n( c_n x + d_n)  = H(x),\]
(ii) For each $x > 0$ continuity point of $H^{-1}(e^{-1/x} )$,
\[ \lim_{ t \to \infty} \frac{ \left( 1/ (1 - G) \right)^{-1}( tx) - d_{[t]} }{c_{[t]}} =
H^{-1}(e^{-1/x} ).  \] Now the relation $\gL_{\gth}^{-1}(e^{-1/x} ) - \gL^{-1}(e^{-1/x} )  = \log
\gth$ and a simple calculation yield that
\[ c_n = \hat c_n, \ \ \ d_n = \hat d_n + \hat c_n \log(\pi^{1/2} e^{-2} ).\]
\end{proof}
\subsection{Some preliminary lemmas}  First of all, note that $\gcd(k, n) =1$ and hence $n' =n$. It is easy to check that $g_1 = 4$ and
\[  \{ x \in \mathbb Z_n : g_x < g_1 \}
= \left \{ \begin{array}{cc} \{0, n/2\} & \text{ if $n$ is even} \\ \{0\} & \text{ if $n$ is odd}.
\end{array} \right.\]
Thus the eigenvalue partition of
$\{0, 1, 2, \ldots, n-1\}$ can be listed as
$\pcal_1,
\pcal_2, \ldots, \pcal_q $, each of which is of size $4$. Since each $\pcal_j, 1 \le j \le q$ is self-conjugate, we can find a set $\acal_j
\subset \pcal_j$ of size $2$ such that
\begin{equation}\label{eq:a_t}
\pcal_j = \{ x : x \in \acal_j \text{ or } n- x \in \acal_j \}.
\end{equation}
 For any sequence of random variables  $b = \{b_l\}_{ t \ge 0}$,  define
 \begin{equation} \label{eq:srformula1}
 \beta_{b, n}(j)  = n^{-2} \prod_{ t \in \acal_j} \left | \sum\limits_{l=0}^{n-1} b_{l} \omega^{ t l} \right |^2, \ \ \  \omega = \exp \left(\frac{2 \pi i }{n} \right), \quad 1 \le j \le q.
 \end{equation}
The next lemma helps us to go from bounded to unbounded entries. For each $n \ge 1$, define
a triangular array of centered random variables $\{ \bar a^{(n)}_l \}_{ 0 \le l < n} $ by
\[ \bar a_ l =    \bar a^{(n)}_l = a_l I_{|a_l| \le n^{ 1/ \gc} } - \E a_l I_{|a_l| \le n^{ 1/ \gc} }.\]
\begin{lemma}[Truncation] \label{lem:trunc}
Assume $\E | a_l|^{ \gc} < \infty$ for some $\gc > 2$. Then, almost surely,
\[ \max_{ 1 \le j \le q} (\beta_{a, n}(j) ) ^{1/4}  -  \max_{ 1 \le j \le q} (\beta_{\bar a, n}(j) ) ^{1/4} = o(1).\]
\end{lemma}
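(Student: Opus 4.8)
The plan is to show that the two maxima in question differ by $o(1)$ almost surely by controlling the spectral radius of the difference matrix, which has input sequence $\{a_l - \bar a_l\}$. Write $c_l = c_l^{(n)} = a_l - \bar a_l = a_l \ind_{|a_l| > n^{1/\gc}} - \E\, a_l \ind_{|a_l| \le n^{1/\gc}}$; since $\E a_l = 0$ this equals $a_l \ind_{|a_l| > n^{1/\gc}} - \E\, a_l \ind_{|a_l| > n^{1/\gc}}$, a centered truncation-at-the-tail variable. First I would record the elementary inequality $|\,u^{1/4} - v^{1/4}\,| \le |u - v|^{1/4}$ for $u, v \ge 0$ (sub-additivity of $t \mapsto t^{1/4}$), which gives
\[
\Big| \max_{1 \le j \le q} \big(\beta_{a,n}(j)\big)^{1/4} - \max_{1 \le j \le q} \big(\beta_{\bar a, n}(j)\big)^{1/4} \Big| \le \max_{1 \le j \le q} \big| \beta_{a,n}(j)^{1/4} - \beta_{\bar a,n}(j)^{1/4} \big| \le \big( \max_{1 \le j \le q} |\beta_{a,n}(j) - \beta_{\bar a, n}(j)| \big)^{1/4}.
\]
So it suffices to prove $\max_{1 \le j \le q} |\beta_{a,n}(j) - \beta_{\bar a, n}(j)| = o(1)$ a.s.

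Next I would expand $\beta_{a,n}(j) - \beta_{\bar a,n}(j)$. Writing $\Lambda_t(b) = n^{-1/2}\sum_{l=0}^{n-1} b_l \omega^{tl}$, we have $\beta_{a,n}(j) = \prod_{t \in \acal_j} |\Lambda_t(a)|^2$ and similarly for $\bar a$; since $\#\acal_j = 2$, say $\acal_j = \{t_1, t_2\}$, the difference is $|\Lambda_{t_1}(a)|^2|\Lambda_{t_2}(a)|^2 - |\Lambda_{t_1}(\bar a)|^2 |\Lambda_{t_2}(\bar a)|^2$, which I would bound by a sum of terms each containing a factor $|\Lambda_t(a)|^2 - |\Lambda_t(\bar a)|^2 = |\Lambda_t(c)|^2 + 2\,\mathrm{Re}\big(\Lambda_t(c)\overline{\Lambda_t(\bar a)}\big)$ (since $a = \bar a + c$), times a bounded-in-probability factor. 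Thus the whole thing is controlled once one shows
\[
\max_{0 < t < n} |\Lambda_t(c)|^2 \to 0 \quad \text{a.s.} \qquad \text{and} \qquad \max_{0 < t < n} |\Lambda_t(\bar a)|^2 = O(\log n) \quad \text{a.s.},
\]
the latter being the standard bound for the squared modulus of a discrete Fourier transform of bounded mean-zero variables (a union bound over $t$ together with a Gaussian-type tail estimate, e.g.\ via Bernstein's inequality, since $|\bar a_l| \le 2 n^{1/\gc}$ and $n^{1/\gc} = o(\sqrt{n/\log n})$ as $\gc > 2$). Combining, $\max_j |\beta_{a,n}(j) - \beta_{\bar a,n}(j)| \le C (\log n) \max_{0<t<n}|\Lambda_t(c)| + \big(\max_{0<t<n}|\Lambda_t(c)|\big)^2 \cdot O(\log n) + \dots$, so everything reduces to a good enough almost-sure bound on $\max_{0<t<n}|\Lambda_t(c)|$.

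The main obstacle is precisely this last estimate: controlling $\max_{0<t<n} |n^{-1/2}\sum_l c_l \omega^{tl}|$, where the $c_l$ are the (centered) tail parts of the $a_l$, and showing it decays fast enough to kill the $\log n$ factor. The natural route is a two-piece split along Davis–Mikosch lines: bound $\sum_l |c_l|$ deterministically by $\sum_{l=0}^{n-1} |a_l| \ind_{|a_l| > n^{1/\gc}}$ plus $n \cdot \E |a_l|\ind_{|a_l|>n^{1/\gc}}$, and use $\E|a_l|^{\gc} < \infty$ together with Borel–Cantelli (via $\sum_n \prob(|a_n| > n^{1/\gc}) < \infty$, so only finitely many $a_l$ exceed the threshold, and $\sum_l |a_l|\ind_{|a_l|>l^{1/\gc}}$ converges a.s.\ by Kronecker/three-series-type reasoning) to conclude $\sum_{l=0}^{n-1}|c_l| = o(\sqrt{n}/\log n)$ a.s. Since $|\Lambda_t(c)| \le n^{-1/2}\sum_l |c_l|$ uniformly in $t$, this gives $\max_{0<t<n}|\Lambda_t(c)| = o(1/\log n)$ a.s., which is more than enough. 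I would need to be slightly careful that the truncation level $n^{1/\gc}$ appears inside $c_l^{(n)}$ for the matrix of dimension $n$, so the relevant quantity is $\sum_{l=0}^{n-1} |a_l| \ind_{|a_l| > n^{1/\gc}} \le \sum_{l : l \ge l_0} |a_l|\ind_{|a_l| > l^{1/\gc}}$ for $l \ge n$ — this monotonicity in the threshold is what lets the a.s.\ convergence of the latter series transfer to the former sum, and it is the one genuinely non-routine point to get right.
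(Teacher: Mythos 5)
Your proposal is correct, but it takes a perturbative route where the paper's own proof is essentially a two-line observation. The paper first notes that the centering constant $\E\, a_l I_{|a_l|\le n^{1/\gc}}$ is the same for every $l$ and is annihilated in every $\lambda_t$ with $0<t<n$ because $\sum_{l=0}^{n-1}\omega^{tl}=0$, so $\beta_{\bar a,n}(j)=\beta_{\tilde a,n}(j)$ with $\tilde a_l=a_l I_{|a_l|\le n^{1/\gc}}$; it then uses $\sum_l\prob(|a_l|>l^{1/\gc})\le\E|a_1|^{\gc}<\infty$ and Borel--Cantelli to conclude that almost surely only finitely many $l$ satisfy $|a_l|>l^{1/\gc}$, and since the truncation level $n^{1/\gc}$ dominates $l^{1/\gc}$ for $l\le n$ (and eventually dominates the finitely many large $|a_l|$ as well), one gets $\tilde a^{(n)}_l=a_l$ for all $0\le l<n$ once $n$ is large: the two maxima are eventually \emph{equal} almost surely, not merely $o(1)$ apart. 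You instead keep the difference sequence $c_l$, reduce via $|u^{1/4}-v^{1/4}|\le|u-v|^{1/4}$ to bounding $\max_j|\beta_{a,n}(j)-\beta_{\bar a,n}(j)|$, and control this by $\max_{0<t<n}\big|n^{-1/2}\sum_l c_l\omega^{tl}\big|$ times an almost-sure $O(\log n)$ bound on the truncated periodogram maximum (Bernstein plus union bound plus Borel--Cantelli). This does work: your tail-sum estimate rests on the same exceedance Borel--Cantelli argument as the paper, the centering contribution is $n\,\E|a_1|I_{|a_1|>n^{1/\gc}}=O(n^{1/\gc})=o(\sqrt n/\log n)$ because $\gc>2$ (you could also kill it exactly via $\sum_l\omega^{tl}=0$, as the paper does), and the resulting bound of order $(\log n)^{3/2}n^{1/\gc-1/2}$ tends to zero. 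What your route buys is an explicit rate and no reliance on eventual exact equality of the truncated and untruncated inputs; what it costs is the extra almost-sure maximal inequality for the Fourier transform of $\bar a$, which the paper never needs. One small slip in wording: the monotonicity you invoke is $I_{|a_l|>n^{1/\gc}}\le I_{|a_l|>l^{1/\gc}}$, valid for $l\le n$ (you wrote ``$l\ge n$''), but this comparison is exactly the one the paper also makes, so the idea is right.
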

\begin{proof} Since $\sum_{l=0}^{n-1} \omega^{ t l} $ = 0 for $0 < t <n$, it follows that $\beta_{\bar a, n}(j) = \beta_{\tilde a, n}(j)$ where
\[ \tilde a_l = \tilde a^{(n)} _l = \bar a_l  + \E a_l I_{|a_l| \le n^{ 1/ \gc} }=a_l I_{|a_l| \le n^{ 1/ \gc}}.\]
By Borel-Cantelli lemma, with probability one, $ \sum_{t=0}^{\infty} |a_l| I_{|a_l | > l^{1/\gc} }$
is finite and has only finitely many non-zero terms. Thus there exists  an integer $N \ge
0$,  which may depend on the sample point, such that
\begin{align}\label{eq:trucbc}
\sum_{l=m}^{n-1}| \tilde a^{(n)}_l  - a_l | = \sum_{l=m}^{n-1} |a_l| I_{|a_l | > n^{1/\gc} } \le
\sum_{t=m}^{\infty} |a_l| I_{|a_l | > t^{1/\gc} } = \sum_{l=m}^{N} |a_l| I_{|a_l | >
l^{1/\gc} }.
\end{align}
Consequently, if $ m > N$, the left side of \eqref{eq:trucbc} is zero. Therefore, the
terms of the two sequences $\{a_l \}_{ m \le l < n} $ and $\{\tilde a^{(n)}_l\}_{ m \le l < n}$ are identical almost surely  for
all sufficiently large $n$ and the assertion follows immediately.
\end{proof}
\begin{lemma}[Bonferroni inequality] \label{bonferroni}
Let $(\Omega, \mathcal F,
\prob)$ be a probability space and let $B_1, B_2, \ldots, B_n$ be events from $\mathcal F$. Then for every integer $m \ge 1$,
\begin{equation} \label{e:bonf}
\sum_{j=1}^{2m} (-1)^{j-1} S_{j,n} \le \prob \Big( \bigcup_{j=1}^n B_i \Big ) \le \sum_{j=1}^{2m-1}
(-1)^{j-1}S_{j,n},
\end{equation}
where \[ S_{j,n}  := \sum_{ 1 \le i_1 < i_2 < \ldots < i_j \le n} \prob \Big( \bigcap_{l=1}^j
B_{i_l} \Big ). \]
\end{lemma}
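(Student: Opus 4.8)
The plan is to establish the inequalities first pointwise on the sample space $\Omega$ and then integrate; in this way everything reduces to an elementary identity for the partial sums of binomial coefficients.

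First I would rewrite the quantities in terms of indicator functions. Put $B := \bigcup_{i=1}^n B_i$ and, for a fixed $\omega \in \Omega$, let $r = r(\omega) := \#\{ i : \omega \in B_i \}$ be the number of the $B_i$ that contain $\omega$. For each $j \ge 1$ one has
\[
\sum_{1 \le i_1 < i_2 < \ldots < i_j \le n} \mathbf{1}_{B_{i_1} \cap \cdots \cap B_{i_j}}(\omega) \;=\; \binom{r}{j},
\]
with the convention $\binom{r}{j} = 0$ when $j > r$ (which in particular covers $j > n$, where the left-hand sum is empty). Also $\mathbf{1}_B(\omega) = 1$ if $r \ge 1$ and $\mathbf{1}_B(\omega) = 0$ if $r = 0$. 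Taking expectations in \eqref{e:bonf} term by term, and using $\E\,\mathbf{1}_B = \prob(B)$ and $\E\sum_{i_1 < \cdots < i_j}\mathbf{1}_{B_{i_1}\cap\cdots\cap B_{i_j}} = S_{j,n}$, it therefore suffices to prove the pointwise bounds
\[
\sum_{j=1}^{2m} (-1)^{j-1}\binom{r}{j} \;\le\; \mathbf{1}\{ r \ge 1 \} \;\le\; \sum_{j=1}^{2m-1} (-1)^{j-1}\binom{r}{j}
\]
for every integer $r \ge 0$.

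When $r = 0$ every binomial coefficient appearing (with $j \ge 1$) vanishes and the displayed chain reads $0 \le 0 \le 0$. For $r \ge 1$ I would invoke the classical telescoping identity
\[
\sum_{j=0}^{L} (-1)^j \binom{r}{j} \;=\; (-1)^L \binom{r-1}{L}, \qquad L \ge 0,
\]
which follows at once by induction on $L$ from Pascal's rule $\binom{r}{j} = \binom{r-1}{j} + \binom{r-1}{j-1}$. Pulling out the $j=0$ term gives $\sum_{j=1}^{L}(-1)^{j-1}\binom{r}{j} = 1 - (-1)^L \binom{r-1}{L}$. With $L = 2m$ this equals $1 - \binom{r-1}{2m} \le 1$, and with $L = 2m-1$ it equals $1 + \binom{r-1}{2m-1} \ge 1$; since $\mathbf{1}\{r \ge 1\} = 1$ here, this is exactly the required pointwise inequality. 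Integrating over $\Omega$ then yields \eqref{e:bonf}.

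There is no genuine difficulty in this lemma — it is a standard fact — so the only points to watch are the sign bookkeeping (an even truncation must give an upper bound for the alternating sum, hence a lower bound for $\prob(B)$, and an odd truncation the reverse) and the degenerate cases $r = 0$ and $j > n$, both handled by the conventions $\binom{r}{j} = 0$ and $S_{j,n} = 0$ for $j$ exceeding $r$ or $n$. An alternative is induction on $n$ via $\prob(B_1 \cup \cdots \cup B_n) = \prob(B_1 \cup \cdots \cup B_{n-1}) + \prob(B_n) - \prob\big((B_1 \cup \cdots \cup B_{n-1}) \cap B_n\big)$ together with the inductive bounds applied to the two $(n-1)$-event expressions, but the indicator argument above is shorter and cleaner, so I would follow it.
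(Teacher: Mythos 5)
Your proof is correct. Note, however, that the paper states this Bonferroni lemma without any proof at all — it is simply cited as a standard fact — so there is no paper argument to compare against. Your pointwise-indicator reduction together with the telescoping identity $\sum_{j=0}^{L}(-1)^j\binom{r}{j}=(-1)^L\binom{r-1}{L}$ is the standard and cleanest way to prove it, the sign bookkeeping is handled correctly (even truncation $L=2m$ gives $1-\binom{r-1}{2m}\le 1$, odd truncation $L=2m-1$ gives $1+\binom{r-1}{2m-1}\ge 1$), and the degenerate cases $r=0$ and $j>n$ are properly dispatched via the convention $\binom{r}{j}=0$. Nothing to fix.
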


\begin{lemma}\label{lem:tailestimation}
Fix $ x \in \mathbb R$. Let $E_1, E_2, c_n$ and $d_n$ be as in Lemma~\ref{lem:maxima}. Let $\sigma^2_n = n^{-c}$, $c>0$. Then there exists  some positive constant $K=K(x)$ such that
 \[ \prob \left ( (E_1 E_2)^{1/4} > (1 + \sigma^2_n)^{-1/2} (c_n x + d_n) \right) \le \frac{K}{n},\ \ x \in \mathbb R.\]
 \end{lemma}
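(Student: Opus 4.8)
The plan is to reduce the claimed bound to the sharp tail asymptotics for $\oa G(x) = \prob((E_1E_2)^{1/4} > x)$ established in Lemma~\ref{lem:maxima}, and then control the effect of the multiplicative perturbation $(1+\sigma_n^2)^{-1/2}$. Write $y_n = (1+\sigma_n^2)^{-1/2}(c_n x + d_n)$. Since $\sigma_n^2 = n^{-c} \to 0$ and, from \eqref{eq:normalize}, $c_n x + d_n = (\log n)^{1/2}/\sqrt 2 + O(\log\log n / (\log n)^{1/2})$, we have $y_n \to \infty$ and in fact $y_n = (c_nx+d_n)(1 + O(n^{-c}))$. So the first step is simply to observe that $y_n$ lies in the regime where \eqref{eq:gtaildef} applies: $\oa G(y_n) \asymp \pi^{1/2} y_n \exp(-2y_n^2)$, and it suffices to bound $y_n \exp(-2 y_n^2)$ by $O(1/n)$ up to the absolute constant.

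The second step is a direct computation of $y_n^2$. Using $d_n^2 = \tfrac12 \log n + \tfrac12 \log\log n + \log(\pi/2)/2 \cdot \text{(lower order)} + O((\log\log n)^2/\log n)$ — more precisely expanding $(c_nx+d_n)^2 = d_n^2 + 2 c_n x d_n + c_n^2 x^2 = \tfrac12\log n + \tfrac12\log\log n + \tfrac{x}{2} + O(\log\log n/\log n)$ — and multiplying by $(1+\sigma_n^2)^{-1} = 1 - n^{-c} + O(n^{-2c})$, one gets
\[
2 y_n^2 = \log n + \log\log n + x + O\!\left(\frac{\log\log n}{\log n}\right) + O(n^{-c}\log n).
\]
Hence $\exp(-2 y_n^2) = n^{-1}(\log n)^{-1} e^{-x}(1 + o(1))$, and therefore $y_n \exp(-2y_n^2) \asymp (\log n)^{1/2} \cdot n^{-1}(\log n)^{-1} e^{-x} = n^{-1}(\log n)^{-1/2} e^{-x}$. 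Combining with $\oa G(y_n) \asymp \pi^{1/2} y_n \exp(-2y_n^2)$, we obtain $\oa G(y_n) \le K(x)/n$ for all large $n$ and a suitable constant $K(x)$ depending only on $x$ (absorbing the bounded factor $(\log n)^{-1/2} e^{-x}$ and the implied constants); enlarging $K(x)$ if necessary handles the finitely many small $n$.

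The main technical point — not really an obstacle, but the step requiring care — is making the asymptotic relation $\asymp$ in \eqref{eq:gtaildef} into an honest one-sided inequality $\oa G(x) \le C \pi^{1/2} x \exp(-2x^2)$ valid for all $x \ge 1$, and tracking that the error terms in the expansion of $2y_n^2$ (both the $O(\log\log n/\log n)$ coming from $d_n$ and the $O(n^{-c}\log n)$ coming from the perturbation $(1+\sigma_n^2)^{-1/2}$) are genuinely $o(1)$, so that $\exp$ of them stays bounded. Since $c > 0$ is fixed, $n^{-c}\log n \to 0$, so the perturbation contributes only a bounded multiplicative factor; this is exactly why the lemma holds uniformly with a constant $K$ independent of $n$. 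No further input beyond Lemma~\ref{lem:maxima} and elementary estimates is needed.
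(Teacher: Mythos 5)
Your approach is correct and leads to the stated bound, but it is organized somewhat differently from the paper's proof. The paper avoids expanding $(c_n x + d_n)^2$ altogether: it first uses the elementary bound $(1+\sigma_n^2)^{-1/2} \ge 1 - \sigma_n^2/2$ and then works through the representation $\oa G(x) = \theta(x)(1-F_\#(x))$ from Lemma~\ref{lem:maxima}, writing $(1-\sigma_n^2/2)(c_nx+d_n) = d^*_n + (d_n-d^*_n) + c_n x - (c_nx+d_n)\sigma_n^2/2$, noting that the three correction terms are all $O_x(c_n)$, and then invoking the defining relation $1 - F_\#(d^*_n) = n^{-1}$. Since a shift of size $O(c_n)$ changes $(d^*_n)^2$ by $O(d^*_n c_n) = O(1)$, this immediately produces the bound $K(x)/n$. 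Your route is the direct computational analogue --- expand $2y_n^2$, verify $\exp(-2y_n^2) \lesssim 1/n$, and convert the $\asymp$ in \eqref{eq:gtaildef} into a one-sided inequality by continuity --- and both buy the same thing; the paper's version simply reuses the scaffolding ($F_\#$, $d^*_n$) already built in Lemma~\ref{lem:maxima} rather than redoing the expansion. One small correction: from $d_n = \tfrac{(\log n)^{1/2}}{\sqrt 2}\bigl(1 + \tfrac14\tfrac{\log\log n}{\log n}\bigr) + \tfrac12 c_n\log\tfrac\pi2$, squaring gives $d_n^2 = \tfrac12\log n + \tfrac14\log\log n + O(1)$, not $\tfrac12\log\log n$ as you wrote; hence $2y_n^2 = \log n + \tfrac12\log\log n + x + O(1)$, and the final bound is $y_n\exp(-2y_n^2) \asymp e^{-x}/n$ (not $e^{-x}/(n\sqrt{\log n})$). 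This slip is harmless --- the conclusion $\oa G(y_n) \le K(x)/n$ is unaffected --- but it is the reason the constants $c_n, d_n$ are designed exactly so that $n\,\oa G(c_nx+d_n) \to \text{const}\cdot e^{-x}$, which is the Gumbel normalization.
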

\begin{proof}
Since $(1 +y)^{-1/2} \ge 1 - y/2$ for $y > 0$,
\[ \prob \left ( (E_1 E_2)^{1/4} > (1 + \sigma^2_n)^{-1/2} (c_n x + d_n) \right) \le \prob \left ( (E_1 E_2)^{1/4} > (1  - \sigma^2_n/2 )(c_n x + d_n) \right).\]
Recall the representation
$$ \prob( (E_1 E_2)^{1/4} >  x ) =\gth(x)(1-F_\#(x))  \text{ as }  x \to \infty.$$
Note that
 $(1 - \sigma^2_n/2) (c_n x + d_n)  = d^*_n +  (d_n - d^*_n) + c_n x-  (c_n x + d_n) \sigma^2_n/2 =
d^*_n +o_x(1)$ where we use the facts that $c_n \to  0 $, $ (d_n  - d^*_n)/ c_n  = o(1)$ and $d_n
\sim \sqrt{ \log n}$. The lemma now easily follows once we note that $ 1- F_{\#} (d^*_n) = n^{-1}$.
\end{proof}
\subsection{A strong invariance principle} We now state the normal approximation result and a suitable corollary that we need.  For $d \ge 1$,
and any distinct integers $i_1, i_2, \ldots, i_d$, from $\Big \{ 1,2, \ldots, \lceil \frac{n-1}{2}\rceil
\Big \}$, define
\[  v_{2d}(l)  = \left(  \cos\left ( \frac{ 2 \pi {i_j} l}{n} \right), \sin \left ( \frac{ 2 \pi {i_j} l}{n}\right) : 1 \le j \le d  \right)^T, \quad l \in \mathbb Z_n.\]
Let $\varphi_{\Sigma}(\cdot)$ denote the density of the $2d$-dimensional Gaussian vector having
mean zero and covariance matrix $\Sigma$ and let $I_{2d}$ be the identity matrix of order $2d$.
\begin{lemma}[Normal approximation, Davis and Mikosch (1999)\cite{Mikosch99}] \label{lm:normalapprox}
Fix $d \ge 1$ and $\gc > 2$ and let $\tilde p_n$ be the density function of
\[ 2^{1/2} n^{-1/2} \sum_{l=0}^{n-1}( \bar a_{l} + \sigma_n N_{l} ) v_{2d}(l),\]
where $\{N_l\}_{ l \ge 0}$ is a sequence of i.i.d.\ $N(0,1)$ random variables, independent of $\{a_l\}_{ l \ge 0}$ and
$\sigma_n^2 = \var(\bar a_0)s_n^2$. If $n ^{-2c} \ln n \le s_n^2 \le 1 $ with $c = 1/2  -
(1-\delta)/ \gc$  \  \ for arbitrarily small $\delta >0$, then the relation
\[ \tilde p_n(x)  = \varphi_{(1+ \sigma_n^2)I_{2d}}(x) (1 + \eps_n ) \quad \text{ with } \eps_n \to 0 \]
holds uniformly for $\|x\|^3 = o_d ( n^{1/2 - 1/ \gc}),  \ x \in \mathbb R^{2d} $.
\end{lemma}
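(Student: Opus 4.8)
The plan is to prove this by the standard Fourier-inversion route for local limit theorems, using the Gaussian perturbation $\sigma_n N_l$ to tame high frequencies and the truncation level $n^{1/\gc}$ to control the Edgeworth correction over a moderate-deviation window. The density $\tilde p_n$ exists because $\sigma_n>0$, and since the $a_l$ (hence the $\bar a_l$) are i.i.d.\ for each fixed $n$, the characteristic function of the sum factorises as
\[
\hat p_n(\theta)=\Big(\prod_{l=0}^{n-1}\psi\big(\sqrt2\,n^{-1/2}\langle\theta,v_{2d}(l)\rangle\big)\Big)\exp\!\Big(-\sigma_n^2\,n^{-1}\!\sum_{l=0}^{n-1}\langle\theta,v_{2d}(l)\rangle^2\Big),\qquad\theta\in\mathbb R^{2d},
\]
where $\psi$ denotes the characteristic function of $\bar a_0$. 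By the orthogonality relations \eqref{eq:ortho} one has $\sum_{l}v_{2d}(l)v_{2d}(l)^T=\tfrac n2 I_{2d}$, so the second factor equals exactly $\exp(-\tfrac{\sigma_n^2}{2}\|\theta\|^2)$ and the covariance of the whole sum is exactly $(\var(\bar a_0)+\sigma_n^2)I_{2d}$, which differs from $(1+\sigma_n^2)I_{2d}$ only through $\var(\bar a_0)=1+o(1)$, a discrepancy that is absorbed into $\eps_n$. Equivalently, since $\sqrt2\,n^{-1/2}\sum_l N_l v_{2d}(l)\sim N(0,I_{2d})$ exactly, the sum can be written $\tilde S_n+\sigma_n Z$ with $Z$ standard normal independent of $\tilde S_n:=\sqrt2\,n^{-1/2}\sum_l\bar a_l v_{2d}(l)$, so $\tilde p_n$ is the $\sigma_n$-mollification of the law of $\tilde S_n$, which makes transparent why the perturbation produces a genuine density with Gaussian-type Fourier decay.

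The next step is to invert, $\tilde p_n(x)=(2\pi)^{-2d}\int_{\mathbb R^{2d}}e^{-i\langle\theta,x\rangle}\hat p_n(\theta)\,d\theta$, and split the integral at $\|\theta\|=T_n$ for a moderate cut-off $T_n$ of order $n^{1/2-1/\gc}$. On the tail $\|\theta\|>T_n$ one uses $|\hat p_n(\theta)|\le\exp(-\tfrac{\sigma_n^2}{2}\|\theta\|^2)$ together with the lower bound $\sigma_n^2\ge\var(\bar a_0)\,n^{-2c}\ln n$; the arithmetic here hinges on the value $c=\tfrac12-(1-\delta)/\gc$, which is exactly what makes $\sigma_n^2T_n^2$ (a positive power of $n$ times $\ln n$) large enough for this contribution to be negligible beside the main term. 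On the bounded range $\|\theta\|\le T_n$ one Taylor-expands $\log\psi$, using that truncation at $n^{1/\gc}$ gives $\E|\bar a_0|^3=O(n^{3/\gc-1})$ (and $=O(1)$ when $\gc\ge3$) while $\E\bar a_0^2=\var(\bar a_0)=1+o(1)$, to obtain $\hat p_n(\theta)=\exp(-\tfrac{\var(\bar a_0)+\sigma_n^2}{2}\|\theta\|^2)(1+r_n(\theta))$ with $\sup_{\|\theta\|\le T_n}|r_n(\theta)|\to0$, the leading cubic remainder being $O(n^{-1/2}\E|\bar a_0|^3\|\theta\|^3)$, which is $o(1)$ precisely in the stated regime.

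The step I expect to be the main obstacle is converting the resulting \emph{additive} control of $\tilde p_n(x)-\varphi_{(1+\sigma_n^2)I_{2d}}(x)$ into the \emph{multiplicative} statement $\tilde p_n(x)=\varphi_{(1+\sigma_n^2)I_{2d}}(x)(1+\eps_n)$, uniformly over the growing set $\{\,x:\|x\|^3=o(n^{1/2-1/\gc})\,\}$: there the target is itself exponentially small, so a crude additive bound is useless. The remedy is the usual moderate-deviations device of shifting the inversion contour to $\theta\mapsto\theta+i x/(1+\sigma_n^2)$, equivalently exponentially tilting the law of the sum so that its mean becomes $x$ and then applying the central local estimate at the tilted mean; this pulls the factor $\varphi_{(1+\sigma_n^2)I_{2d}}(x)$ out front and leaves a genuine $o(1)$ relative error, the bound on the tilted cumulant generating function over the admissible range of $x$ being exactly where the moment hypothesis $\E|a_l|^\gc<\infty$ and the value of $c$ re-enter. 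Finally one checks that all the estimates are uniform in $x$ and in the (otherwise unrestricted) choice of distinct indices $i_1,\dots,i_d$. This is the multivariate version of the local limit estimate of Davis and Mikosch (1999)\cite{Mikosch99}, to which we refer for the remaining routine bookkeeping.
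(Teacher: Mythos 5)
You should first note what the paper itself does with this statement: it offers no proof at all, but imports the lemma (in multivariate form) from Davis and Mikosch (1999)\cite{Mikosch99}, so there is no internal argument to compare yours with; your attempt has to stand on its own. Its architecture is indeed the standard one and essentially the route of the cited paper — Fourier inversion, the Gaussian perturbation to make the density exist and to kill genuinely high frequencies, third-moment control coming from the truncation at $n^{1/\gc}$, and an exponential tilting/saddle-point step to turn an additive local estimate into the multiplicative statement on the moderate-deviation zone. The factorisation of the characteristic function, the identity $\sum_l v_{2d}(l)v_{2d}(l)^T=\tfrac n2 I_{2d}$, and the observation that $\var(\bar a_0)=1+o(1)$ can be absorbed into $\eps_n$ (this last point itself needs the estimate $1-\var(\bar a_0)=O(n^{2/\gc-1})$ against $\|x\|^2=o(n^{1/3-2/(3\gc)})$) are all correct.

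The quantitative core, however, does not hold together as written. With your cutoff $T_n\asymp n^{1/2-1/\gc}$ and the minimal smoothing permitted by the hypothesis, $s_n^2=n^{-2c}\ln n$ with $c=1/2-(1-\delta)/\gc=1/2-1/\gc+\delta/\gc$, one gets $\sigma_n^2T_n^2\asymp n^{-2\delta/\gc}\ln n\to 0$: the Gaussian factor gives \emph{no} decay at all on $\{\|\theta\|>T_n\}$, let alone enough to dominate the target $\varphi_{(1+\sigma_n^2)I_{2d}}(x)$, which at the edge of the admissible range is as small as $\exp(-c'\,n^{1/3-2/(3\gc)})$; note the sign of $\delta$ — the hypothesis allows the smoothing to be \emph{weaker} than $n^{-(1-2/\gc)}$, not stronger, so your claim that $c$ ``is exactly what makes $\sigma_n^2T_n^2$ a positive power of $n$'' is backwards. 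The intermediate frequencies must instead be carried by the product $\prod_l|\psi(\cdot)|$, via a bound of the type $|\psi(t)|\le\exp(-\var(\bar a_0)t^2/4)$ valid while $|t|\,\E|\bar a_0|^3$ stays small, which pushes the quadratic decay out to $\|\theta\|$ of order $n^{\min(1/2,\,3/2-3/\gc)}$; only beyond that may the smoothing term take over, and every tail contribution has to be compared with $\varphi_{(1+\sigma_n^2)I_{2d}}(x)$, not merely shown to be $o(1)$. Likewise your central-range claim $\sup_{\|\theta\|\le T_n}|r_n(\theta)|\to0$ is false: the cumulative cubic remainder is of order $n^{-1/2}\E|\bar a_0|^3\|\theta\|^3$, which is $O(1)$ (not $o(1)$) at $\|\theta\|\asymp T_n$ for $\gc\le 3$ and diverges for $\gc>3$; the $o(1)$ bound is only available after the tilt, when the relevant scale is $\|x\|$ rather than $T_n$. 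Finally, the step you yourself single out as the main obstacle — uniform multiplicative control over the growing set of $x$ via the contour shift, with uniformity in the choice of frequencies — is named but then delegated to Davis and Mikosch, i.e.\ to the very result being proved. As it stands the proposal is a correct road map whose hard estimates are either quantitatively wrong or outsourced, so it cannot be accepted as a proof of the lemma.
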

\begin{corollary}\label{cor:approx_by_normal}
Let $\gc > 2$  and $\sigma_n^2 = n^{-c}$ where $c$ is as in Lemma \ref{lm:normalapprox}. Let $B \subseteq \mathbb R^{2d}$ be
a measurable set. Then
\[ \left | \int_{B} \tilde p_n(x) dx-  \int_{B}   \varphi_{(1+ \sigma_n^2)I_{2d}}(x) dx \right | \le  \eps_n  \int_{B}   \varphi_{(1+ \sigma_n^2)I_{2d}}(x) dx + O_d (\exp ( - n^{\eta} ) ), \]
for some $\eta > 0$ and   uniformly over all the $d$-tuples of distinct integers $ 1 \le i_1< i_2<
\ldots < i_d \le \lceil \frac{n-1}{2}\rceil$.
\end{corollary}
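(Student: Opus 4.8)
The plan is to split the integral over $B$ into a \emph{bulk} region $\{\|x\|\le R_n\}$, where Lemma~\ref{lm:normalapprox} gives the pointwise comparison $|\tilde p_n(x)-\varphi_{(1+\sigma_n^2)I_{2d}}(x)|\le \eps_n\,\varphi_{(1+\sigma_n^2)I_{2d}}(x)$, and a \emph{tail} region $\{\|x\|>R_n\}$, which I will show contributes only a stretched-exponentially small error to $\int_B\tilde p_n$ and to $\int_B\varphi_{(1+\sigma_n^2)I_{2d}}$ separately. The radius $R_n$ must be small enough to stay inside the validity range $\|x\|^3=o_d(n^{1/2-1/\gc})$ of the normal approximation, yet large enough that the tails are negligible.

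Concretely, fix an exponent $a$ with $0<a<\tfrac13\big(\tfrac12-\tfrac1\gc\big)$ — a nonempty range because $\gc>2$ — and set $R_n=n^{a}$, so that $R_n\to\infty$ and $R_n^3=o(n^{1/2-1/\gc})$. Then write
\begin{multline*}
\int_B \tilde p_n(x)\,dx - \int_B \varphi_{(1+\sigma_n^2)I_{2d}}(x)\,dx
= \int_{B\cap\{\|x\|\le R_n\}} \big(\tilde p_n-\varphi_{(1+\sigma_n^2)I_{2d}}\big)(x)\,dx \\
+ \int_{B\cap\{\|x\|> R_n\}} \tilde p_n(x)\,dx
- \int_{B\cap\{\|x\|> R_n\}} \varphi_{(1+\sigma_n^2)I_{2d}}(x)\,dx .
\end{multline*}
On $\{\|x\|\le R_n\}$ the hypothesis of Lemma~\ref{lm:normalapprox} holds, so the first term is bounded in absolute value by $\eps_n\int_{B\cap\{\|x\|\le R_n\}}\varphi_{(1+\sigma_n^2)I_{2d}}\le \eps_n\int_B\varphi_{(1+\sigma_n^2)I_{2d}}$, with $\eps_n\to0$ independent of the chosen $d$-tuple.

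For the tails, note $1\le 1+\sigma_n^2\le 2$, so the Gaussian tail term is at most $\prob(\|Z\|>R_n/\sqrt2)$ for $Z\sim N(0,I_{2d})$, which by the $\chi^2_{2d}$ tail bound is $\le C_d R_n^{2d-2}\exp(-R_n^2/4)=O_d(\exp(-n^{\eta}))$ for any $\eta<2a$. For $\int_{\|x\|>R_n}\tilde p_n$, recall $\tilde p_n$ is the density of $S_n:=2^{1/2}n^{-1/2}\sum_{l=0}^{n-1}(\bar a_l+\sigma_n N_l)v_{2d}(l)$; split $S_n$ into its truncated part (with the $\bar a_l$'s) and its Gaussian part (with the $\sigma_n N_l$'s). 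The Gaussian part is a mean-zero Gaussian vector whose covariance has operator norm $\le 2d\sigma_n^2\le 2d$, so its norm exceeds $R_n/2$ with probability $O_d(\exp(-cR_n^2))$. For the truncated part, each coordinate is a sum of independent mean-zero terms each bounded by $2^{3/2}n^{-1/2+1/\gc}$ while the coordinate variance is $2n^{-1}\sum_{l}\cos^2(2\pi i_jl/n)\var(\bar a_l)\le 1$, uniformly in $i_j$; since $R_n\cdot n^{-1/2+1/\gc}=n^{a-1/2+1/\gc}\to0$, Bernstein's inequality stays in its sub-Gaussian regime and gives $\prob(|(S_n)_j|>R_n/(2\sqrt{2d}))\le 2\exp(-cR_n^2)$. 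A union bound over the $2d$ coordinates yields $\prob(\|S_n\|>R_n)=O_d(\exp(-cn^{2a}))=O_d(\exp(-n^{\eta}))$.

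Assembling the three estimates produces exactly the claimed bound with this $\eta>0$; the constants, the rate $\eps_n$, and all tail bounds are uniform over $1\le i_1<\cdots<i_d\le\lceil\frac{n-1}{2}\rceil$ because the variance bound $\le1$, the truncation bound $|\bar a_l|\le 2n^{1/\gc}$, and the uniformity in Lemma~\ref{lm:normalapprox} do not see the particular choice of indices. The only mildly delicate point — and the one deserving care — is the simultaneous choice of the single exponent $a$: it must make $R_n=n^{a}$ lie inside the validity window $\|x\|^3=o(n^{1/2-1/\gc})$ of the normal approximation \emph{and} keep the large-deviation term $R_n n^{-1/2+1/\gc}$ negligible so that Bernstein still delivers sub-Gaussian decay. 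Both requirements reduce to $0<a<\tfrac12-\tfrac1\gc$ (with the stronger $3a<\tfrac12-\tfrac1\gc$ for the first), which is feasible precisely because $\gc>2$.
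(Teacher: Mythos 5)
Your proof is correct and follows essentially the same route as the paper: truncate at radius $n^{a}$, apply Lemma~\ref{lm:normalapprox} on the bulk $\{\|x\|\le n^a\}$, and control both tail integrals by a coordinatewise union bound combining Gaussian tail estimates with Bernstein's inequality for the truncated sums, all uniformly in the choice of indices. If anything, your constraint $3a<\tfrac12-\tfrac1\gc$ is slightly more careful than the paper's stated choice $0<\ga<\tfrac12-\tfrac1\gc$, since it keeps the bulk region honestly inside the validity window $\|x\|^3=o_d(n^{1/2-1/\gc})$ of the normal approximation.
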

\begin{proof}
Set $r = n^{\ga}$ where $0<  \ga < 1/2 - 1/ \gc $. Using Lemma \ref{lm:normalapprox}, we have,
\begin{align*}
 &\left | \int_{B} \tilde p_n(x) dx-  \int_{B}   \varphi_{(1+ \sigma_n^2)I_{2d}}(x) dx \right | \\
\le&   \left | \int_{B \cap \{\|x \| \le r \}} \tilde p_n(x) dx-  \int_{B\cap \{\|x \| \le r \}}
\varphi_{(1+ \sigma_n^2)I_{2d}}(x) dx  \right| +
 \int_{B\cap \{\|x \| > r \}} \tilde p_n(x) dx +   \int_{B\cap \{\|x \| > r \}}   \varphi_{(1+ \sigma_n^2)I_{2d}}(x) dx\\
 \le&  \eps_n \int_{B \cap \{\|x \| \le r \}} \varphi_{(1+ \sigma_n^2)I_{2d}}(x) dx +
 \int_{ \{\|x \| > r \}} \tilde p_n(x) dx +   \int_{ \{\|x \| > r \} }   \varphi_{(1+ \sigma_n^2)I_{2d}}(x) dx=T_1+T_2+T_3 \ \ (say).
\end{align*}
Let $v^{(j)}_{2d}(l)$
denote the $j$-th coordinate of
$v_{2d}(l)$, $1 \le j \le 2d$. Then, using the normal tail bound,  $ \prob \big( | N(0, \sigma^2) |  > x \big ) \le 2 e^{-x/\sigma}$ for $x > 0$,
\begin{align*}
 T_2&=\int_{ \{\|x \| > r \}} \tilde p_n (x) dx = \prob \left (  \Big \| 2^{1/2} n^{-1/2} \sum_{l=0}^{n-1}( \bar a_{l} + \sigma_n N_{l} ) v_{2d}(l)  \Big \|  > r \right)\\
 &\le 2d \max_{ 1 \le j \le 2d} \prob \left (  \Big | 2^{1/2} n^{-1/2} \sum_{l=0}^{n-1}( \bar a_{l} + \sigma_n N_{l} ) v^{(j)}_d(l) \Big |  > r/(2d)      \right)\\
 &\le 2d \max_{ 1 \le j \le 2d} \prob \left (  \Big | n^{-1/2} \sum_{l=0}^{n-1} \bar a_{l}  v^{(j)}_d(l) \Big |  > r /(4\sqrt 2 d)     \right) + 4d \exp \Big( - r n^{c/2} /(4\sqrt 2d) \Big).
 \end{align*}
 Note that
 $\bar a_{l}  v^{(j)}_d(l), 0 \le l < n$ are independent, have mean zero and  variance at most one and are bounded by $2n^{1/\gc}$. Therefore, by applying Bernstein's inequality and simplifying, for some constant $K >0$,
 \begin{align*}
  \prob \left (  \Big | n^{-1/2} \sum_{l=0}^{n-1} \bar a_{l}  v^{(j)}_d(l) \Big |  > r /4\sqrt 2 d     \right)  \le \exp(  - K r^2).
 \end{align*}
Further,
\[  T_3 =\int_{ \{\|x \| > r \} }   \varphi_{(1+ \sigma_n^2)I_{2d}}(x) dx \le 4d \exp( -  r/4d). \]
Combining the above estimates finishes the proof.
\end{proof}
\subsection{Proof of Theorem~\ref{theo:max}}
First assume that $n$ is even. Then $k$ must be odd and $S(n/2) =\{ n/2\}$.
Thus with the previous notation,
\[  \spr( n^{-1/2}A_{k,n} ) = \max \left \{ \max_{ 1 \le j \le q} (\beta_{a,
n}(j) ) ^{1/4}, |n^{-1/2}\gl_0|, \ |n^{-1/2}\gl_{n/2}| \right \}.\]
Since $d_q \to \infty$ and $c_q \to 0$, by Chebyshev inequality, we have
\[ \sup_{0  \le t < n} \prob \left ( |n^{-1/2}\gl_t| \ge x c_q + d_q \right) \to 0, \quad \text{for each } x \in \mathbb R.\]
Thus  finding the limiting distribution $ \spr(n^{-1/2}A_{k,n} )$ is
asymptotically equivalent to finding the limiting distribution of $\max_{ 1 \le j \le q} (\beta_{a,n}(j) ) ^{1/4}$. Clearly,
this is also true if $n$ is odd as that case is even simpler.

 Now, as in the proof of Theorem \ref{theo:lsd12}, first assume that $\{a_l\}_{ l \ge 0}$
are i.i.d.\ standard normal. Let $\{ E_j \}_{j \ge 1}$  be i.i.d.\ standard exponentials. By Lemma~\ref{lem:product}, it easily follows that
$$\prob\Big(\max_{ 1 \le t \le q} (\beta_{a, n}(t) ) ^{1/4} > c_qx+ d_q\Big)
=\prob\Big((E_{2j-1}E_{2j})^{1/4} > c_qx+ d_q \text{ for some } 1 \le j \le q\Big).$$  The Theorem
then follows in this special case from Lemma \ref{lem:maxima}.

We now tackle the general case by using truncation of $\{a_l\}_{ l \ge 0}$, Bonferroni's inequality and the
strong normal approximation result given in the previous subsections.  Fix $x \in \mathbb R$. For notational convenience,
define
\begin{align*} Q^{(n)}_1 &:= \prob \left( \max_{ 1 \le j \le q} (\beta_{\bar a + \sigma_n N, n}(j) ) ^{1/4} > c_q x + d_q\right),\\
Q^{(n)}_2 &:=\prob \left( \max_{ 1 \le j \le q} (1+ \sigma_n^2) (E_{2j-1} E_{2j} ) ^{1/4} > c_q x + d_q \right ) ,
\end{align*}
where $\{N_l\}_{ l \ge 0}$ is a sequence of i.i.d.\ standard normals random variables. Our goal is to
approximate $ Q^{(n)}_1$ by  the simpler quantity $Q^{(n)}_2$.  By Bonferroni's inequality, for all $m \ge 1$,
\begin{equation}\label{eq:Ssandwich} \sum_{j=1}^{2m} (-1)^{j-1} S_{j,n} \le Q^{(n)}_1 \le
\sum_{j=1}^{2m-1} (-1)^{j-1}S_{j,n},
\end{equation} where \[ S_{j,n}  = \sum_{ 1 \le t_1 < t_2 <
\ldots < t_j \le q} \prob \left( (\beta_{\bar a + \sigma_n N, n}(t_1) ) ^{1/4} > c_q x + d_q,
\ldots, (\beta_{\bar a + \sigma_n N, n}(t_j) ) ^{1/4}
> c_q x + d_q \right ). \]
Similarly, we have \begin{equation}\label{eq:Tsandwich} \sum_{j=1}^{2m}
(-1)^{j-1} T_{j,n} \le Q^{(n)}_2 \le \sum_{j=1}^{2m-1} (-1)^{j-1}T_{j,n},
 \end{equation}
where
 \[ T_{j,n}  = \sum_{ 1 \le t_1 < t_2 < \ldots < t_j \le q} \prob \Big( (1+ \sigma_n^2) (E_{2t_1-1} E_{2t_1} ) ^{1/4} > c_q x + d_q, \ldots,(1+ \sigma_n^2) (E_{2t_j-1} E_{2t_j} ) ^{1/4} > c_q x + d_q \Big ). \]
Therefore, the difference between $Q^{(n)}_1$ and $Q^{(n)}_2$ can be bounded as follows:
\begin{equation}\label{eq:maindiff}
  \sum_{j=1}^{2m} (-1)^{j-1} (S_{j,n} - T_{j,n} )   - T_{2m+1, n}  \le Q^{(n)}_1  -Q^{(n)}_2 \le \sum_{j=1}^{2m-1} (-1)^{j-1}(S_{j,n} - T_{j,n}) + T_{2m, n},
  \end{equation}
for each $m \ge 1$. By independence and Lemma \ref{lem:tailestimation}, there exists $K = K(x)$ such that
\begin{equation}\label{eq:Tbound}
T_{j,n} \le { n \choose j} \frac{K^j}{n^j} \le \frac{K^j}{j!} \quad \text{for all } n, j \ge 1.
\end{equation}
Consequently, $\lim_{j \to \infty} \limsup_n T_{j,n} = 0$.

Now fix $j \ge 1$. Let us bound the
difference between $S_{j,n}$ and $T_{j,n}$.
Let $\acal_{t}$ defined in \eqref{eq:a_t} be represented as $\acal_{t}=\{ e_{t},  e'_{t} \}$.
For $1 \le t_1 < t_2 < \ldots < t_j \le q$, define
\[ v_{2j}(l) = \left ( \cos \left ( \frac{ 2 \pi l e_{t_1}  }{n} \right) ,
\sin\left ( \frac{ 2 \pi l e_{t_1}  }{n} \right) ,
 \cos \left ( \frac{ 2 \pi l e'_{t_1}  }{n} \right) ,
 \ldots,
\cos \left( \frac{ 2 \pi l e_{t_j}'  }{n} \right) ,
\sin \left ( \frac{ 2 \pi l e_{t_j}'  }{n} \right)  \right).
\]
 Then,
 \begin{align*} \prob \Big( (\beta_{\bar a + \sigma_n N, n}(t_1) ) ^{1/4} > c_q x + d_q, \ldots, (\beta_{\bar a + \sigma_n N, n}(t_j) ) ^{1/4} > c_q x + d_q \Big )\\
 = \prob \Big( 2^{1/2}n^{-1/2} \sum_{l = 0}^{n-1} (\bar a_l + \sigma_n N_l) v_{2j}(l) \in B^{(j)}_n \Big),
 \end{align*}
   where \[  B^{(j)}_n := \left \{ y \in \mathbb R^{4j}:  (y_{4t +1} ^2+ y_{4t+2}^2)^{1/4} (y_{4t +3} ^2+ y_{4t+4}^2)^{1/4} > 2^{1/2}( c_qx+ d_q), 0 \le t < j \right \}. \]
 By Corollary \ref{cor:approx_by_normal} and the fact  $N_1^2 + N_2^2 \stackrel{\mathcal{D}}{=} 2E_1$, we deduce that  uniformly over all the $d$-tuples $1 \le t_1< t_2 < \ldots < t_j \le q$,
  \begin{align*}  &\left| \prob \Big( 2^{1/2}n^{-1/2} \sum_{l = 0}^{n-1} (\bar a_l + \sigma_n N_l) v_{2j}(l) \in B^{(j)}_n \Big)  - \prob\Big( (1+ \sigma_n^2)^{1/2} (E_{2t_m -1} E_{2t_m})^{1/4} > c_qx+ d_q, 1 \le m \le j \Big) \right|\\
 &\le \eps_n \prob\Big( (1+ \sigma_n^2)^{1/2} (E_{2t_m -1} E_{2t_m})^{1/4} > c_qx+ d_q, 1 \le m \le j \Big) + O(\exp(-n^{\eta}) ).
  \end{align*}
Therefore, as $n \to \infty$,
\begin{equation}\label{eq:STdiff}
|S_{j,n} - T_{j,n}| \le \eps_n T_{j,n} + { n \choose j} O(\exp(-n^{\eta}) ) \le \eps_n
\frac{K^j}{j!} + o(1) \to 0, \end{equation}where $O(\cdot)$ and $o(\cdot)$ are uniform over $j$.
 Hence using (\ref{eq:Ssandwich}), (\ref{eq:Tsandwich}), (\ref{eq:Tbound}) and
 (\ref{eq:STdiff}),
 we have
 \[ \limsup_n |Q^{(n)}_1 - Q^{(n)}_2| \le  \limsup_n T_{2m+1, n} +  \limsup_n T_{2m,n}  \quad \text{ for each } m \ge 1. \]
 Letting $m \to \infty$, we conclude $ \lim_n Q^{(n)}_1 - Q^{(n)}_2= 0$.  Since by Lemma \ref{lem:maxima},
 \[ \displaystyle \max_{ 1 \le j \le q} (E_{2j-1} E_{2j} ) ^{1/4}  = O_p(  (\log n)^{1/2} ) \ \  \text{and}\ \
 \sigma_n^2 = n^{-c},\] it follows that
 \[ \displaystyle \frac{ (1+ \sigma_n^2)^{1/2}  \max_{ 1 \le j \le q}  (E_{2j-1} E_{2j} ) ^{1/4} -   d_q}{c_q}
 \stackrel{\mathcal{D}}{\to} \Lambda_{}\]
and consequently,
\begin{align*}
 \displaystyle \frac{  \max_{ 1 \le j \le q} (\beta_{\bar a + \sigma_n N, n}(j) ) ^{1/4} -   d_q}{c_q}
 \stackrel{\mathcal{D}}{\to} \Lambda_{}.
 \end{align*}
In view of Lemma \ref{lem:trunc}, it now suffices to show that
\[  \max_{ 1 \le j \le q} (\beta_{\bar a + \sigma_n N, n}(j) ) ^{1/4}  -   \max_{ 1 \le j \le q} (\beta_{\bar a , n}(j) )
^{1/4}   = o_p( c_q).\]
 We use the basic inequality
\[ \big| |z_1z_2|  - |w_1w_2| \big| \le \big(|z_1| + |w_2| \big) \max \Big \{ |z_1 - w_1|, |z_2 - w_2| \Big \}, \quad z_1, z_2, w_1,  w_2 \in \mathbb C, \]
to obtain
\begin{align*} \left | \max_{ 1 \le j \le q} (\beta_{\bar a + \sigma_n N, n}(j) ) ^{1/2}  -   \max_{ 1 \le j \le q} (\beta_{\bar a , n}(j) ) ^{1/2} \right |
&\le \Big (M_n( \bar a + \sigma_n N) +  M_n( \bar a) \Big )  M_n( \sigma_n N)  \\
&\le \Big (2 M_n( \bar a + \sigma_n N) +  M_n( \sigma_n N) \Big )  M_n( \sigma_n N)
\end{align*}
where,  for any sequence of random variables $X = \{X_l\}_{ l \ge 0}$,
$$M_n(X) :=  \max_{ 1 \le t \le n}  \left | n^{-1/2} \sum\limits_{l=0}^{n-1} X_{l}\omega^{ t l} \right |.$$ As a trivial consequence of Theorem 2.1 of Davis and Mikosch (1999)\cite{Mikosch99}, we have
\[M_n^2( \sigma_n N) = O_p( \sigma_n \log n)\ \ \text{and} \ \ M_n^2( \bar a + \sigma_n N)  =
O_p(\log n).\] Together with $\sigma_n = n^{-c/2}$ they imply that
\[ \max_{ 1 \le j \le q} (\beta_{\bar a + \sigma_n N,
n}(j) ) ^{1/2}  - \max_{ 1 \le j \le q} (\beta_{\bar a , n}(j) ) ^{1/2}  = o_p(n^{-c/4}).\] From
the inequality
\[ |\sqrt y_1 - \sqrt y_2| \le \frac{1}{ \min \{ \sqrt y_1, \sqrt y_2 \} } |y_1 - y_2|,  \ \ y_1, y_2 > 0\]
it easily follows that
\[  \max_{ 1 \le j \le q} (\beta_{\bar a + \sigma_n N, n}(j) ) ^{1/4}  -   \max_{ 1 \le j \le q} (\beta_{\bar a , n}(j) ) ^{1/4}  = o_p(n^{-c/8}) = o_p(c_q).\]
This completes the proof of Theorem \ref{theo:max}. \hfill $\Box$
\section{Concluding remarks and open problems}
To establish the LSD of $k$-circulants for more general subsequential choices of $(k, n)$,
a much more comprehensive
study of the orbits of the translation operator
acting on the ring $\mathbb Z_{n'}$  by $\mathbb T_k(x) = x k \mod n'$ is required.  In particular, one may perhaps first establish an
asymptotic negligibility criteria similar to
that given in Lemma~\ref{lem:fkn}. Then,
along the line similar to that of the proofs of Theorems~\ref{theo:lsd12} and \ref{theo:lsd45} - first using the abundant independence structure among the eigenvalues of the $k$-circulant matrices  when the input sequence is i.i.d.\  normals as given in Lemma~\ref{lem:product} and then claiming universality through an appropriate use of the invariance principle. What particularly complicates matters is that in general there {\it may} be contributing classes of several sizes as opposed to only one (of size $2g$ or $g$) that we saw in Theorems~\ref{theo:lsd12} and \ref{theo:lsd45} respectively. Thus it is also interesting to investigate whether we can select $k = k(n)$ in a relatively simple way so that  there  exist finitely many  positive integers $h_1, h_2, \ldots, h_r, r>1 $ with
\[ \# \big \{ x \in \mathbb Z_{n'} : g_x = h_j \big \}/n' \to c_j >0, \quad 1 \le j \le r, \]
where $c_1 + \ldots+ c_r =1$. In that case the LSD would be an attractive mixture distribution.

Establishing the limit distribution of the spectral radius
for general subsequential choices of $(k, n)$ appears to be even more challenging.
In fact, even under the set up of Theorems~\ref{theo:lsd12} and \ref{theo:lsd45}, this  seems to be a nontrivial problem.
As a first step, one needs to find max-domain of attraction for $(\prod_{j=1}^{g} E_j)^{1/2g}$  where $\{E_j\}_{ 1 \le j \le g}$ are i.i.d.\ exponentials which  requires a detailed understanding of the behaviour of $\prob(\prod_{j=1}^g E_j > x)$ as $x \to \infty$.
When $ g > 2$, we were unable to locate
any results on this.
Our preliminary
investigation shows that this is fairly involved
and we are currently working on this problem. Moreover,  an extra layer of difficulty  arises while dealing with the spectral radius  out of  the fact that  we can not immediately  ignore some `bad' classes of eigenvalues whose proportions are asymptotically negligible like  we did while establishing the LSD.

\subsection*{Acknowledgement} We are grateful to Z.\ D.\ Bai for help in accessing the
article Zhou (1996). We also thank Rajat Hazra and Koushik Saha for some interesting discussions.
We are especially thankful to Koushik Saha for a careful reading of the manuscript and pointing out
many typographical errors. We are extremely grateful to the Referee for his critical, constructive
and detailed comments on the manuscript.
\section*{Appendix}
Here we provide a proof of Theorem \ref{theo:formula}. Recall that
for any two positive integers $k$ and $n$, $p_1 <p_2<\ldots < p_c$
are all their common prime factors so that,
$$n=n^{\prime} \prod_{q=1}^{c} p_q^{\beta_q} \ \text{  and  } \ \  k=k^{\prime}
\prod_{q=1}^{c} p_q^{\alpha_q}$$ where $\alpha_q,\ \beta_q \ge 1$ and $n^{\prime}$, $k^{\prime}$,
$p_q$ are pairwise relatively prime. Define
\begin{equation}\label{eq:modrelation}
m := \max_{1 \le q \le c} \lceil \beta_q/\alpha_q \rceil, \ \ \
[t]_{m,b} :=  tk^m \mbox{ mod } b,\ b \mbox{ is a positive integer}.
\end{equation}

Let $e_{m,d}$ be a $d \times 1$ vector whose only nonzero element
is $1$ at  $(m \mbox{ mod } d)$-th position, $E_{m,d}$ be the $d
\times d$ matrix with $e_{jm,d}$, $ 0 \leq j < d$ as its columns
and for dummy symbols $\delta_0, \delta_1, \ldots$, let
$\Delta_{m,b,d}$ be a diagonal matrix as given below.
\begin{eqnarray}e_{m,d}&=&
\left[ \begin{array} {c}
            0 \\
             \vdots \\
            1\\
            \vdots
            \end{array} \right]_{d\times 1}, \\
E_{m,d} &=&
\left[ e_{0,d}\ \ e_{m,d}\ \ e_{2m,d} \ldots e_{(d-1)m,d}\right],\\
\Delta_{m,b,d} &=& \text{diag} \left[ \delta_{[0]_{m,b}},\
\delta_{[1]_{m,b}},\ \ldots,\ \delta_{[j]_{m,b}},\ \ldots,\
\delta_{[d-1]_{m,b}}\right].
\end{eqnarray}Note that
\[\Delta_{0,b,d}=
\text{diag} \left[ \delta_{0\mbox{ mod } b},\ \delta_{1\mbox{ mod
} b},\ \ldots,\ \delta_{j\mbox{ mod } b},\ \ldots,\
\delta_{d-1\mbox{ mod } b} \right].
\]
\begin{lemma} \label{res:permu}
Let $ \pi=( \pi(0),\ \pi(1),\ \ldots,\ \pi(b-1) )$ be a
permutation of $(0,1, \ldots,b-1)$. Let
\[ P_{\pi} = \left[ e_{\pi(0), b}\
e_{\pi(1), b}\ \ldots e_{\pi(b-1), b} \right]. \] Then, $ P_{\pi}$
is a permutation matrix and  the $(i,j)$th element of $P_{\pi}^T
E_{k,b}\Delta_{0,b,b} P_{\pi}$ is given by
\[(P_{\pi}^T E_{k,b}\Delta_{0,b,b} P_{\pi})_{i,j} = \left\{
\begin{array}{ll} \delta_t &
\mbox{if } (i,j) = (\pi^{-1}(kt  \mbox{ mod } b), \pi^{-1}(t)),  \ \ 0 \le t < b\\
0 & \mbox{otherwise.}\\ \end{array} \right. \]
\end{lemma}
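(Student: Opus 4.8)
The plan is to reduce the statement to pure bookkeeping with standard basis vectors and then carry out one matrix multiplication. First I would dispose of the easy half: since $\pi$ is a bijection of $\{0,1,\dots,b-1\}$, the columns $e_{\pi(0),b},\dots,e_{\pi(b-1),b}$ of $P_\pi$ are the $b$ distinct standard basis vectors of $\mathbb{R}^b$ in some order, so $P_\pi$ is a permutation matrix with $P_\pi^{-1}=P_\pi^T$. In coordinates this reads $(P_\pi)_{r,j}=\I(r=\pi(j))$, hence $(P_\pi^T)_{i,r}=\I(r=\pi(i))$. Next, straight from the definitions of $E_{m,d}$ and $\Delta_{0,b,d}$, the $s$-th column of $E_{k,b}$ is $e_{sk,b}$, i.e.\ $(E_{k,b})_{r,s}=\I\!\left(r= sk\bmod b\right)$, and $\Delta_{0,b,b}$ is diagonal with $(\Delta_{0,b,b})_{s,s}=\delta_{s\bmod b}=\delta_s$ for $0\le s<b$; consequently $(E_{k,b}\Delta_{0,b,b})_{r,s}=\delta_s\,\I\!\left(r= sk\bmod b\right)$.

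With these entrywise descriptions in hand the computation is forced. Writing the product as a double sum and using the two point-mass indicators to fix $r=\pi(i)$ and $s=\pi(j)$,
\[
\bigl(P_\pi^T E_{k,b}\Delta_{0,b,b} P_\pi\bigr)_{i,j}
=\sum_{r=0}^{b-1}\sum_{s=0}^{b-1} \I(r=\pi(i))\,\delta_s\,\I\!\left(r= sk\bmod b\right)\,\I(s=\pi(j))
=\delta_{\pi(j)}\,\I\!\left(\pi(i)= k\pi(j)\bmod b\right).
\]
Now set $t:=\pi(j)$, equivalently $j=\pi^{-1}(t)$. The surviving constraint becomes $\pi(i)= kt\bmod b$, that is $i=\pi^{-1}(kt\bmod b)$, and the value there is $\delta_t$; for every other pair $(i,j)$ the indicator vanishes. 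This is exactly the claimed formula. Moreover, letting $t$ run over $\{0,\dots,b-1\}$ and using that $\pi^{-1}$ is a bijection, one sees that the resulting matrix carries exactly one nonzero entry in each row and in each column, so it is just $E_{k,b}\Delta_{0,b,b}$ with its rows and columns relabelled by $\pi^{-1}$.

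If I wanted a coordinate-free version I would instead apply the operator to a basis vector: $P_\pi e_{j,b}=e_{\pi(j),b}$, then $\Delta_{0,b,b}$ multiplies by $\delta_{\pi(j)}$, then $E_{k,b}$ sends $e_{\pi(j),b}$ to $e_{k\pi(j)\bmod b,\,b}$, and finally $P_\pi^T=P_\pi^{-1}$ sends this to $e_{\pi^{-1}(k\pi(j)\bmod b),\,b}$; hence the $j$-th column of the product is $\delta_{\pi(j)}\,e_{\pi^{-1}(k\pi(j)\bmod b),\,b}$, and the substitution $t=\pi(j)$ gives the same conclusion. There is essentially no obstacle in this lemma; the only point requiring care is the bookkeeping of the modular reductions — the $1$ in $e_{jm,d}$ sits in position $jm\bmod d$, and all indices are taken in $\{0,\dots,b-1\}$ — so one must consistently interpret $k\pi(j)$ and $kt$ modulo $b$. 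Once that convention is fixed, the identity drops out of a single matrix multiplication.
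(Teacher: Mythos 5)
Your computation is correct and complete: the entrywise descriptions $(P_\pi)_{r,j}=\mathbb{I}(r=\pi(j))$, $(E_{k,b})_{r,s}=\mathbb{I}(r=sk \bmod b)$ and $(\Delta_{0,b,b})_{s,s}=\delta_s$ are exactly what the paper's definitions give, and the double-sum collapse together with the substitution $t=\pi(j)$ yields precisely the stated formula; the paper itself omits the proof as easy, and this is the intended direct verification. One small caution about your closing aside: the claim that the product has exactly one nonzero symbol in each \emph{row} is only valid when $\gcd(k,b)=1$, since $t\mapsto kt \bmod b$ need not be injective in general (each column always has exactly one); this does not affect the lemma or your main argument, as the entrywise formula holds without any coprimality assumption, and in the paper's application $b=n'$ is coprime to $k$ anyway.
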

The proof is easy and we omit it.

In what follows, $\chiup(A)(\lambda)$ stands for the characteristic polynomial of the matrix $A$ evaluated at $\lambda$  but for ease of notation,
we shall suppress the argument  $\lambda$ and write simply $\chiup(A)$.\begin{lemma} \label{res:2} Let
$k$ and $b$ be positive integers. Then
\begin{eqnarray} \chiup \left( A_{k,b} \right) &=& \chiup \left( E_{k,b} \Delta_{0, b, b} \right).
\end{eqnarray}where, $\delta_j = \sum_{l=0}^{b-1} a_{l} \omega^{j l}, \ 0 \le j <
b$, $\omega = cos(2\pi/b) +i sin(2\pi/b)$, $i^2=-1$.
\end{lemma}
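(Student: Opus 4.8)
The plan is to show that $A_{k,b}$ and $E_{k,b}\Delta_{0,b,b}$ are in fact \emph{similar}, conjugate by the Fourier matrix; equality of characteristic polynomials is then immediate. Write $\omega=\cos(2\pi/b)+i\sin(2\pi/b)$ and let $F=\big((\,\omega^{st}/\sqrt b\,)\big)_{0\le s,t<b}$ be the Fourier matrix of order $b$; it is symmetric and unitary, so $F^{-1}=\overline F=\big((\,\omega^{-st}/\sqrt b\,)\big)$. The starting point is the classical diagonalization of the ordinary circulant, $F^{-1}A_{1,b}F=\Delta_{0,b,b}$, whose diagonal entries are precisely the numbers $\delta_j=\sum_{l=0}^{b-1}a_l\omega^{jl}$ (its eigenvector for $\delta_j$ being $(\omega^{pj})_{0\le p<b}$).

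The first thing I would record is the purely combinatorial identity
\begin{equation*}
A_{k,b}=E_{k,b}^{T}\,A_{1,b},
\end{equation*}
checked entrywise: $(E_{k,b}^{T}A_{1,b})_{p,q}=\sum_{s}\I(s\equiv pk\bmod b)\,(A_{1,b})_{s,q}=a_{q-pk}=(A_{k,b})_{p,q}$, all indices read mod $b$. In words, left multiplication by $E_{k,b}^{T}$ replaces row $p$ by row $pk\bmod b$, which is exactly the row rule defining the $k$-circulant on top of the ordinary one. The second, and key, computation is that the same map is self-dual under $F$:
\begin{equation*}
\big(F^{-1}E_{k,b}^{T}F\big)_{s,t}=\frac1b\sum_{p=0}^{b-1}\omega^{p(kt-s)}=\I(s\equiv kt\bmod b)=(E_{k,b})_{s,t},
\end{equation*}
using $\sum_{p=0}^{b-1}\omega^{pm}=b$ when $b\mid m$ and $0$ otherwise; hence $F^{-1}E_{k,b}^{T}F=E_{k,b}$.

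Combining the three identities, conjugation by $F$ yields
\begin{equation*}
F^{-1}A_{k,b}F=\big(F^{-1}E_{k,b}^{T}F\big)\big(F^{-1}A_{1,b}F\big)=E_{k,b}\,\Delta_{0,b,b},
\end{equation*}
so $A_{k,b}$ is similar to $E_{k,b}\Delta_{0,b,b}$ and in particular $\chiup(A_{k,b})=\chiup(E_{k,b}\Delta_{0,b,b})$, which is the assertion. I expect no genuine obstacle here; the only points needing care are the two entrywise verifications — keeping track of the transpose (it is what makes $E_{k,b}^{T}$ act on rows rather than columns) and the geometric-sum evaluation. This reduction is also exactly what sets up Lemma~\ref{res:permu}: one then conjugates $E_{k,b}\Delta_{0,b,b}$ by the permutation matrix $P_\pi$ that lists $\mathbb Z_{b}$ along the orbits $S(x)$, bringing it to a block form whose blocks produce the factors $\lambda^{n_j}-\Pi_j$ of Theorem~\ref{theo:formula}.
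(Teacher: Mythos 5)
Your proof is correct and follows essentially the same route as the paper: both show that conjugation by the Fourier matrix turns $A_{k,b}$ into $E_{k,b}\Delta_{0,b,b}$, so the characteristic polynomials coincide. The only difference is bookkeeping --- the paper writes the $j$-th row of $A_{k,b}$ as $a^T P_b^{jk}$ and diagonalizes the cyclic shift $P_b = UDU^*$, leaving $U^*A_{k,b}U = E_{k,b}\Delta_{0,b,b}$ as an ``easy computation,'' whereas you make that step explicit through the factorization $A_{k,b}=E_{k,b}^{T}A_{1,b}$ and the identity $F^{-1}E_{k,b}^{T}F=E_{k,b}$, both of which check out entrywise.
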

\begin{proof}
Define the $b \times b$ permutation matrix \[ P_b = \left[
\begin{array}{cc} \underline{0} & I_{b-1} \\ 1 & \underline{0}^T
\end{array} \right] .\] Observe
that for $0 \le j < b$, the $j$-th row of $A_{k,b}$ can be written
as $a^T P_{b}^{jk}$ where $P_{b}^{jk}$ stands for $jk$-th power of
$P_b$. From direct calculation, it is easy to verify that $P_b =
UDU^*$ is a spectral decomposition of $P_b$ where
\begin{eqnarray}
D&=&\mbox{diag}(1,\omega,\ldots,\omega^{b-1}),  \\
 U &=& [ u_0\  u_1\  \cdots \  u_{b-1} ] \mbox{ with } u_j
= b^{-1/2} (1, \omega^{j}, \omega^{2j}, \ldots, \omega^{(b-1)j} ), \ 0 \le j < b.
\end{eqnarray} Note that
 $\delta_j  = a^T u_j, \ 0 \le j < b.$ From easy
computations, it now follows that
\[ U^*A_{k,b}U  = E_{k,b} \Delta_{0,b,b},  \]
so that, $\chiup \left( A_{k,b} \right) = \chiup \left( E_{k,b}
\Delta_{0,b,b} \right)$, proving the lemma.
\end{proof}
\begin{lemma} \label{res:3} Let
$k$ and $b$ be positive integers and, $x = b/gcd(k, b)$. Let for dummy variables $\gamma_{0},\ \gamma_{1},\ \gamma_{2},\ldots,
\gamma_{b-1}$,
\[\Gamma = diag \left( \gamma_{0},\ \gamma_{1},\ \gamma_{2},\ldots,
\gamma_{b-1} \right).\] Then
\begin{eqnarray} \chiup \left( E_{k,b} \times \Gamma \right)
&=& \lambda^{b-x} \chiup \left( E_{k,x} \times diag \left(
\gamma_{0 \text{ mod }b},\ \gamma_{k \text{ mod }b},\ \gamma_{2k
\text{ mod }b },\ldots, \gamma_{(x-1)k \text{ mod } b} \right)
\right).
\end{eqnarray}
\end{lemma}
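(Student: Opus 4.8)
The plan is to compute $\chiup(E_{k,b}\Gamma)$ directly from the way $E_{k,b}\Gamma$ acts as a linear operator on $\mathbb{C}^b$, peeling off the factor $\lambda^{b-x}$ that comes from the coordinates lying outside the range of the multiplication-by-$k$ map on $\mathbb{Z}_b$. Recall that $E_{k,b}$ has columns $e_{0,b},e_{k,b},\dots,e_{(b-1)k,b}$, so, writing $\phi(j):=jk \bmod b$, we have $(E_{k,b}\Gamma)e_{j,b}=\gamma_j\,e_{jk,b}=\gamma_j\,e_{\phi(j),b}$ for $0\le j<b$.

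First I would observe that the range of $A:=E_{k,b}\Gamma$ is contained in $V:=\operatorname{span}\{e_{i,b}:i\in\phi(\mathbb{Z}_b)\}$; in particular $A(V)\subseteq V$. Writing $\mathbb{C}^b=V\oplus W$ with $W:=\operatorname{span}\{e_{i,b}:i\notin\phi(\mathbb{Z}_b)\}$ and ordering the basis as (a basis of $V$, then a basis of $W$), the matrix of $A$ is block upper triangular with zero bottom block-row, so $\chiup(A)=\lambda^{\dim W}\,\chiup(A|_V)$.

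Next I would identify $\dim W$ and $A|_V$. Since $\phi$ is multiplication by $k$ on $(\mathbb{Z}_b,+)$, its image is the subgroup generated by $k$, of order $b/\gcd(k,b)=x$; hence $\dim V=x$ and $\dim W=b-x$. Because $xk\equiv 0 \bmod b$, the $x$ elements $0,k,2k,\dots,(x-1)k$ reduced mod $b$ are distinct and exhaust $\phi(\mathbb{Z}_b)$, so $\iota:\mathbb{Z}_x\to\phi(\mathbb{Z}_b)$, $\iota(j):=jk \bmod b$, is a bijection. A one-line modular computation, using again that $xk$ is a multiple of $b$, gives $\iota(jk \bmod x)=jk^2 \bmod b=\phi(\iota(j))$, i.e.\ $\iota$ conjugates $\phi|_{\phi(\mathbb{Z}_b)}$ to multiplication by $k$ on $\mathbb{Z}_x$. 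Carrying the weights along, in the basis $(e_{\iota(0),b},\dots,e_{\iota(x-1),b})$ of $V$ the operator $A|_V$ sends $e_{\iota(j),b}\mapsto\gamma_{\iota(j)}\,e_{\iota(jk \bmod x),b}$, which under the relabeling $e_{\iota(j),b}\leftrightarrow e_{j,x}$ is exactly the action on $\mathbb{C}^x$ of $E_{k,x}\,\operatorname{diag}\!\big(\gamma_{0 \bmod b},\gamma_{k \bmod b},\dots,\gamma_{(x-1)k \bmod b}\big)$. Therefore $\chiup(A|_V)$ equals the characteristic polynomial of that matrix, and combining this with the previous step gives the statement.

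The only point requiring care is the index bookkeeping of the last step: checking that $\{jk \bmod b:0\le j<x\}$ both exhausts $\phi(\mathbb{Z}_b)$ and that $\iota$ intertwines the two dynamics with the stated relabeling of the $\gamma$'s. This is a routine consequence of $x$ being the additive order of $k$ modulo $b$, so I do not anticipate a genuine obstacle. A more computational alternative would be to expand $\det(\lambda I-E_{k,b}\Gamma)$ over permutations and group the surviving terms according to the cycles of $\phi$ on its eventual image, but the subspace argument above is cleaner to write down.
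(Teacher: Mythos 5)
Your argument is correct, and it is essentially the paper's proof rephrased in operator/invariant-subspace language: the paper achieves the same block-triangular reduction by conjugating with the explicit permutation matrix $P=[B\;B^c]$ (which is exactly your basis reordering into $V\oplus W$), and its computation of the block $CB$ is precisely your identification of $A|_V$ with $E_{k,x}\operatorname{diag}(\gamma_{jk\bmod b})$ via the bijection $\iota$. The index checks you flagged as routine (distinctness of $jk\bmod b$ for $0\le j<x$, and $\iota$ intertwining the two multiplication-by-$k$ maps) do hold, so there is no gap.
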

\begin{proof}
Define the following matrices
\[ B_{b \times x} = \left[ e_{0,b} \ e_{k,b} \
e_{2k,b} \ \ldots \ e_{(x-1)k,b} \right] \mbox{\ and\ \ } P =
\left[ B \ B^c \right] \] where $B^c$ consists of those columns
(in any order) of $I_b$ that are not in $B$. This makes $P$ a
permutation  matrix.

Clearly, $E_{k,b} = \left[ B \  B \ \cdots \
B \right]$ which is a $b \times b$ matrix of rank $x$, and we have
 \[ \chiup \left( E_{k,b} \Gamma \right) = \chiup \left( P^T E_{k,b} \Gamma P \right). \] Note
that,
\[\begin{array}{lcl} P^T E_{k,b} \Gamma P & = & \left[
\begin{array}{llcl} I_{x} & I_{x} & \ldots & I_{x} \\
0_{(b-x) \times x} &
0_{(b-x) \times x} & \ldots & 0_{(b-x) \times x} \\
\end{array} \right] \Gamma P  \\ & & \\ & = &
\left[ \begin{array}{c} C \\ 0_{(b-x) \times b} \\ \end{array}
\right] P
\\ & & \\ & = &   \left[ \begin{array}{c} C \\ 0_{(b-x) \times b}  \\
\end{array} \right] \left[ B \  B^c \right]  = \left[ \begin{array}{cc} CB & CB^c \\ 0 & 0
\\ \end{array}
\right]
\end{array} \]
where, \[
\begin{array}{lcl} C & =
& \left[ I_{x} \  I_{x} \  \cdots \  I_{x} \right] \Gamma \\
& & \\ & = &\left[ I_{x} \  I_{x} \  \cdots \  I_{x} \right]
\times \text{diag} (\gamma_{0},\ \gamma_{1},\ \ldots,\ \gamma_{b-1}). \\
\end{array}
\]
Clearly, the characteristic polynomial of $ P^T E_{k,b} \Gamma P$
does not depend on $CB^c$, explaining  why we did not bother to
specify the order of columns in $B^c$. Thus  we have,
\[ \chiup \left( E_{k,b} \Gamma \right) = \chiup \left( P^T E_{k,b}
\Gamma P \right) = \lambda^{b-x} \chiup \left( CB \right).
\]
It now remains to show that $CB=E_{k,x} \times \text{diag} \left(
\gamma_{0 \text{ mod }b},\ \gamma_{k \text{ mod }b},\ \gamma_{2k
\text{ mod }b },\ldots, \gamma_{(x-1)k \text{ mod } b} \right)$.
Note that, the $j$-th column of $B$ is $e_{jk,b}$. So, $j$-th
column of $CB$ is actually the $(jk \mbox { mod } b )$-th column
of $C$. Hence, $(jk \mbox { mod } b )$-th column of $C$ is
$\gamma_{jk \text{ mod } b\ }e_{jk \text{ mod } x}$. So,
\[ CB = E_{k,x} \times \text{diag} \left( \gamma_{0 \text{ mod }b},\
\gamma_{k \text{ mod }b},\ \gamma_{2k \text{ mod }b },\ldots,
\gamma_{(x-1)k \text{ mod } b} \right)
\] and the Lemma is proved completely.
\end{proof}
\begin{proof}{\it of Theorem \ref{theo:formula}}. We first prove
the Theorem for $A_{k ,
n^{\prime}}$.
Since $k$ and $n^{\prime}$ are relatively prime, by Lemma \ref{res:2},
 \[
\chiup(A_{k,n^{\prime}}) =\chiup(E_{k,n^{\prime}}
\Delta_{0,n^{\prime},n^{\prime}}). \] Get the sets $S_0$, $S_1$,
$\ldots$ to form a partition of $\{0, 1, \ldots, n'-1\}$, as in
Section \ref{section:eigenvalues}.

Define the permutation $\pi$ on the set $\mathbb Z_{n^{\prime}}$
by setting $\pi(t) = s_t$, $0 \le t < n^{\prime}$.  This
permutation $\pi$ automatically yields a permutation matrix
$P_{\pi}$ as in Lemma \ref{res:permu}.

Consider the positions of
$\delta_v$ for $v \in S_j$ in the product
$P_{\pi}^TE_{k,n^{\prime}}\Delta_{0,n^{\prime},n^{\prime}}P_{\pi}$.
Let $N_{j-1}=\sum_{t=0}^{j-1}|S_t|$. We know, $S_j = \{r_jk^x
\text{ mod } n^{\prime}, x \ge 0\}$ for some integer $r_j$. Thus,
\[ \pi^{-1}\left( r_jk^{t-1} \mbox{ mod } n^{\prime} \right) = N_{j-1}+t,\ \ 1 \le t \le
n_j \] so that, position of $\delta_v$  for $v=r_jk^{t-1} \mbox{
mod } n^{\prime}  $, $1 \le t \le n_j$ in $P_{\pi}^T
E_{k,n^{\prime}}\Delta_{0,n^{\prime}}P_{\pi}$ is given by \[
\left( \pi^{-1}(r_jk^y \mbox{ mod } n^{\prime} ),
\pi^{-1}(r_jk^{y-1} \mbox{ mod } n^{\prime} ) \right) = \left\{
\begin{array}{ll} \left( N_{j-1}+t+1,\ N_{j-1}+t \right) &
\mbox{if, } 1 \le t < n_j \\ \left( N_{j-1}+1,\ N_{j-1}+n_j
\right) & \mbox{if, } t = n_j \\ \end{array} \right. \]
Hence, \[
P_{\pi}^TE_{k,n^{\prime}}\Delta_{0,n^{\prime},n^{\prime}}P_{\pi} =
\text{diag} \left( L_0,\ L_1,\ \ldots \right) \] where, for $j \ge
0$, if $n_j=1$ then $L_j = \left[ \delta_{r_j} \right]$ is a $1
\times 1$ matrix, and if $n_j > 1$, then,
\[ L_j = \left[ \begin{array}{cccccc} 0 & 0 & 0 & \ldots & 0 &
\delta_{r_jk^{n_j-1} \text{ mod } n^{\prime} }
\\ \delta_{r_j \text{ mod } n^{\prime}} & 0 & 0 & \ldots & 0 & 0 \\ 0 & \delta_{r_jk \text{ mod } n^{\prime}} & 0 &
\ldots & 0 & 0\\ & & & \vdots & \\
0 & 0 & 0 & \ldots & \delta_{r_jk^{n_j-2} \text{ mod } n^{\prime}} & 0. \\
\end{array}\right] \] Clearly, $\chi(L_j) = \lambda^{n_j} - \Pi_j$. Now the result
follows from the identity \[ \chiup \left( E_{k,n^{\prime}}
\Delta_{0,n^{\prime},n^{\prime}} \right) = \prod_{j \ge 0}
\chiup(L_j) = \prod_{j \ge0} ( \lambda^{n_j} - \Pi_j ).
\]
Now let us prove the results for the general case.
Recall that $n=n^{\prime} \times \Pi_{q=1}^{c} p_q^{\beta_q}$.
Then, again using Lemma \ref{res:2},
\[ \chiup (A_{k,n}) = \chiup(E_{k,n} \Delta_{0,n,n}). \]
Recalling Equation \ref{eq:modrelation}, Lemma \ref{res:2}
and using Lemma \ref{res:3} repeatedly,
\[ \begin{array}{lcl} \chiup (A_{k,n}) & = & \chiup (E_{k,n} \Delta_{0,n,n})\\ &
=& \lambda^{n-n^{\prime}} \chiup(E_{k,n^{\prime}}
\Delta_{m,n,n^{\prime}})\\ & = & \lambda^{n-n^{\prime}}
\chiup(E_{k,n^{\prime}} \Delta_{m+j,n,n^{\prime}}) \ \hfill \ [\text{ for all }
j \ge 0]\\ & = & \lambda^{n-n^{\prime}} \chiup
\left(E_{k,n^{\prime}} \times \text{diag}\left( \delta_{[0]_{0,n}},\
\delta_{[y]_{0,n}},\ \delta_{[2y]_{0,n}},\ldots,
\delta_{[(n^{\prime}-1)y]_{0,n}} \right) \right) \ \hfill \  [ \text{where }
 y = n/n^{\prime}].
\end{array}
\] Replacing
$\Delta_{0,n^{\prime},n^{\prime}}$ by $\text{diag} \left(
\delta_{[0]_{0,n}},\ \delta_{[y]_{0,n}},\
\delta_{[2y]_{0,n}},\ldots, \delta_{[(n^{\prime}-1)y]_{0,n}}
\right)$, we can mimic the rest of the proof given for
$A_{k,n^{\prime}}$, to complete the proof in the general case.
\end{proof}

\footnotesize
\noindent Address for correspondence:\\
\noindent Arup Bose\\
Stat-Math Unit\\
Indian Statistical Institute\\
203 B. T. Road\\
Kolkata 700108\\
INDIA\\
email: bosearu@gmail.com

\end{document}